\newtheorem{thm}{Theorem}[section]
\newtheorem{cor}[thm]{Corollary}
\newtheorem{lem}[thm]{Lemma}
\newtheorem{prop}[thm]{Proposition}
\theoremstyle{definition}
\newtheorem{definition}[thm]{Definition}
\newtheorem{rem}[thm]{Remark}
\newtheorem{ex}[thm]{Example}
\newcommand{\CC}{\widehat{\mathbb{C}}}
\newcommand{\C}{\mathbb{C}}
\newcommand{\D}{\mathbb{D}}
\newcommand{\TT}{\mathcal{T}}
\renewcommand{\H}{\mathbb{H}}
\newcommand{\N}{\mathbb{N}}
\newcommand{\R}{\mathbb{R}}
\newcommand{\QQ}{\mathcal{Q}}
\newcommand{\hol}{\textup{Hol}}
\newcommand{\A}{\mathcal{A}}
\renewcommand{\S}{\mathcal{S}}
\newcommand{\B}{\mathcal{B}}
\newcommand{\CTC}{\mathcal{C}}
\newcommand{\K}{\mathcal{K}}
\newcommand{\F}{\mathcal{F}}
\newcommand{\LC}{\mathtt{LC}}
\newcommand{\EF}{\mathtt{EF}}
\newcommand{\HV}{\mathtt{HV}}
\newcommand{\HF}{\mathtt{HF}}
\newcommand{\BP}{\mathtt{BP}}
\newcommand{\DW}{\mathtt{DW}}
\newcommand{\LL}{\mathcal{L}}%
\newcommand{\no}{\noindent}
\newcommand{\dstyle}{\displaystyle}
\renewcommand{\Re}{\textup{Re}\,}
\renewcommand{\Im}{\textup{Im}\,}
\newcommand{\closure}{\overline}
\newcommand{\de}{\partial}
\DeclareMathOperator{\esup}{ess\sup}
\title[Loewner theory for quasiconformal extensions: old and new]{Loewner theory for quasiconformal extensions:\\ old and new}
\author[I. Hotta]{Ikkei Hotta}
\address{Department of Applied Science, Yamaguchi University, 2-16-1 Tokiwadai, Ube 755-8611, Japan}
\email{ihotta@yamaguchi-u.ac.jp}
\subjclass[2010]{Primary 30C65, 37F30, Secondary 30C70, 30C45, 30D05}
\keywords{Loewner theory, quasiconformal mapping, universal Teichm\"uller space, evolution family, Loewner chain}
\thanks{This work was supported by Grant-in-Aid for JSPS Fellows (13J02250) and Grant-in-Aid for Young Scientists (B) (26800053)\\\indent This paper will be included in the proceedings of the 2nd GSIS-RCPAM International Symposium ``Geometric Function Theory and Applications in Sendai'' which was held in Tohoku University on September 10th-13th, 2013.}
\begin{document}

\begin{abstract}
This survey article gives an account of quasiconformal extensions of univalent functions with its motivational background from Teichm\"uller theory and classical and modern approaches based on Loewner theory.
\end{abstract}

\maketitle

\

\tableofcontents

\newpage

	%
			\section{\bf Universal Teichm\"uller spaces}


The notion of the universal Teichm\"uller spaces was illuminated in the theory of quasiconformal mappings as an embedding of the Teichm\"uller spaces of compact Riemann surfaces of finite genus.
Several equivalent models of universal Teichm\"uller spaces are known (see e.g. \cite{Sugawa:2007}).
In this article we will focus on the connection with a space of the Schwarzian derivatives of conformal extensions of quasiconformal mappings defined on the upper half-plane $\H^+ := \{ z \in \C : \Im z > 0\}$.

	%
			\subsection{Quasiconformal mappings}
	%

A homeomorphism $f$ of a domain $G \subset \C$ is called \textbf{\textit{k}-quasiconformal} if $f_{z}$ and $f_{\bar{z}}$, the partial derivatives in $z$ and $\bar{z}$ in the distributional sense, are locally integrable on $G$ and satisfy 
\begin{equation}
\label{qc_def}
|f_{\bar{z}}(z)| \leq k |f_{z}(z)|
\end{equation}
almost everywhere in $G$, where $k \in [0,1)$. 
The above definition implies that a quasiconformal map $f$ is sense-preserving, namely, the Jacobian $J_{f} := |f_{z}|^{2} - |f_{\bar{z}}|^{2}$ is always positive.

In order to observe the geometric interpretation of the inequality \eqref{qc_def}, assume for a while $f \in C^{1}(G)$.
Then the differential is
$$
df = \frac{\de f}{\de z} dz + \frac{\de f}{\de \bar{z}} d \bar{z}.
$$
We shall consider the $\R$-linear transformation $T(z) := f_{z} z + f_{\bar{z}} \bar{z}$.
Denote $\alpha := \arg  f_{z}$ and $\beta := \arg f_{\bar{z}}$.
Then we have
\begin{eqnarray*}
T(re^{i\theta}) &=& |f_{z}|e^{i\alpha} re^{i\theta} + |f_{\bar{z}}|e^{i\beta} re^{-i\theta}\\
		&=& re^{i\psi} \left( |f_{z}|e^{i(\theta -\phi)}+  |f_{\bar{z}}|e^{-i(\theta-\phi)}\right)
\end{eqnarray*}
where $\phi := (\beta - \alpha)/2$ and $\psi := (\beta + \alpha)/2$.
Consequently, $f$ maps each infinitesimal circle in $G$ onto an infinitesimal ellipse with axis ratio bounded by $(1+k)/(1-k)$ (Fig. 1).
\begin{figure}[h]
\begin{center}
\hspace{30pt}\includegraphics[width=380pt]{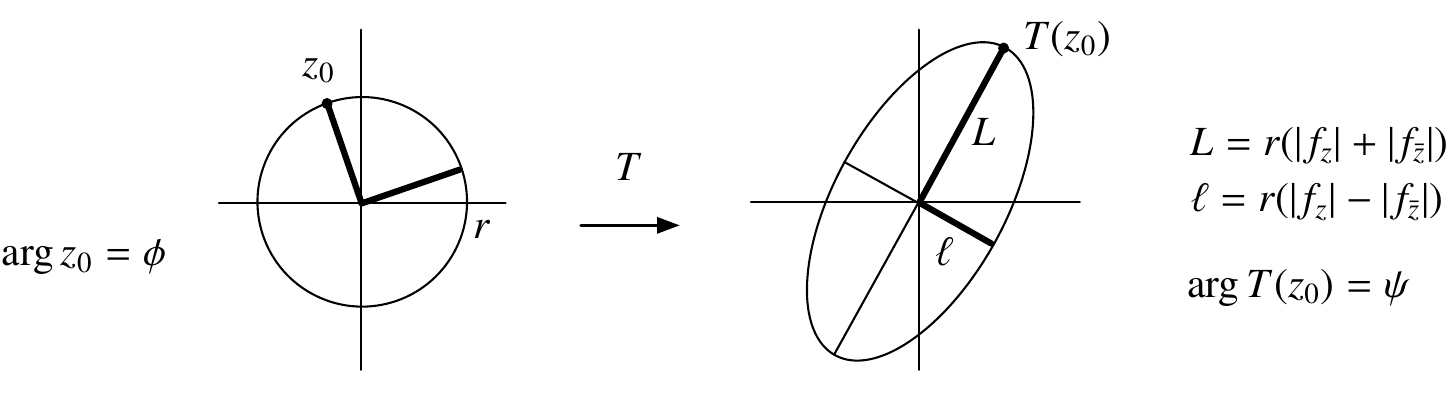}\\
{Figure 1. An infinitesimal circle is mapped to an infinitesimal ellipse.}
\end{center}
\end{figure}

Suppose that $f$ is conformal (i.e., holomorphic injective) on $G$.
Recall that $f_{z} = (f_{x} -if_{y})/2$ and $f_{\bar{z}} = (f_{x} + i f_{y})/2$. 
Thus $f_{\bar{z}}(z) =0$ for all $z \in G$ which is exactly the Cauchy-Riemann equations. 
Hence $f$ is 0-quasiconformal. In this case $L = \ell$ in Figure 1.
Conversely, if $f$ is 0-quasiconformal, then by \eqref{qc_def} $f_{\bar{z}}(z) =0$ for almost all $z \in G$.
By virtue of the following Weyl's lemma, we conclude that $f$ is conformal on $G$.
 \begin{lem}[Weyl's lemma (see e.g. {\cite[p.84]{ImayoshiTaniguchi:1992}})]
Let $f$ be a continuous function on $G$ whose distributional derivative $f_{\bar{z}}$ is locally integrable on $G$. 
If $f_{\bar{z}} =0$ in the sense of distributions on $G$, then $f$ is holomorphic on $G$.
\end{lem}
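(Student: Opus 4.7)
The plan is to prove the lemma by the classical mollification argument: smooth out $f$ by convolution, observe that the smoothed versions are genuinely holomorphic, and then pass to the limit using Weierstrass' theorem on uniform limits of holomorphic functions.

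First, I would fix a standard mollifier, i.e., a nonnegative function $\phi \in C^{\infty}_{c}(\C)$ with $\int_{\C} \phi \, dA = 1$ whose support lies in the unit disk and which is radial (this radial choice is convenient but not essential). For $\vareps > 0$, set $\phi_{\vareps}(z) := \vareps^{-2}\phi(z/\vareps)$. Given any relatively compact subdomain $G' \Subset G$, choose $\vareps_{0} > 0$ so small that $G'_{\vareps_{0}} := \{z : \textup{dist}(z, G') < \vareps_{0}\} \Subset G$, and for $0 < \vareps < \vareps_{0}$ define
\begin{equation*}
f_{\vareps}(z) := (f * \phi_{\vareps})(z) = \int_{\C} f(z-w)\, \phi_{\vareps}(w)\, dA(w), \qquad z \in G'.
\end{equation*}
Standard properties of convolution give $f_{\vareps} \in C^{\infty}(G')$, and continuity of $f$ yields $f_{\vareps} \to f$ uniformly on $G'$ as $\vareps \to 0$.

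The central step is to verify that each $f_{\vareps}$ is holomorphic on $G'$. Since convolution with a test function converts distributional derivatives into classical ones, one has the identity $(f_{\vareps})_{\bar z} = (f_{\bar z}) * \phi_{\vareps}$ on $G'$. The hypothesis $f_{\bar z} = 0$ in the sense of distributions means that for every $\psi \in C^{\infty}_{c}(G)$ one has $\langle f_{\bar z}, \psi \rangle = 0$; applying this with $\psi(w) = \phi_{\vareps}(z - w)$, which is a valid test function in $G$ for $z \in G'$ and $\vareps < \vareps_{0}$, gives $(f_{\bar z} * \phi_{\vareps})(z) = 0$. Hence $(f_{\vareps})_{\bar z} \equiv 0$ on $G'$, and since $f_{\vareps}$ is smooth, the Cauchy--Riemann equations are satisfied classically, so $f_{\vareps}$ is holomorphic on $G'$.

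Finally, the uniform convergence $f_{\vareps} \to f$ on compact subsets of $G'$ combined with the Weierstrass convergence theorem for holomorphic functions yields holomorphy of $f$ on $G'$. Since $G' \Subset G$ was arbitrary, $f$ is holomorphic on $G$. The only subtle point in the argument is the justification that convolution interchanges with the distributional $\bar\partial$; I would expect this to be the main obstacle to write carefully, but it is entirely standard once one unpacks the definition of the distributional derivative and the support condition on the mollifier.
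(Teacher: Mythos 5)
Your mollification argument is correct and is the standard proof of Weyl's lemma; the paper itself gives no proof, merely citing Imayoshi--Taniguchi, so there is nothing to compare against beyond noting that your route is the canonical one. The one detail you flagged but did not carry out --- that $(f*\phi_{\vareps})_{\bar z}(z)=\langle f_{\bar z},\phi_{\vareps}(z-\cdot)\rangle$ --- works because $\de_{\bar z}\bigl[\phi_{\vareps}(z-w)\bigr]=-\de_{\bar w}\bigl[\phi_{\vareps}(z-w)\bigr]$, so the defining identity $\langle f_{\bar z},\psi\rangle=-\langle f,\psi_{\bar w}\rangle$ applied to $\psi(w)=\phi_{\vareps}(z-w)$ gives exactly the classical $\bar\de$-derivative of $f_{\vareps}$; with that sign bookkeeping in place the rest (smoothness plus Cauchy--Riemann gives holomorphy of $f_{\vareps}$, uniform convergence plus Weierstrass gives holomorphy of $f$) is airtight. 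Note also that the hypothesis that $f_{\bar z}$ be locally integrable is not actually used in your argument: only the vanishing of the distributional derivative matters.
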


Let $B(G)$ be the open unit ball $\{\mu \in L^{\infty}(G) : ||\mu||_\infty < 1\}$ of $L^{\infty}(G)$, where $L^{\infty}(G)$ is the complex Banach space of all bounded measurable functions on $G$, and 
$
||\mu||_{\infty} := \esup\displaylimits_{{z \in G}} |\mu(z)|
$
for a $\mu \in L^{\infty}(G)$.
An element $\mu \in B(G)$ is called the \textbf{Beltrami coefficient}.
If $f$ is a $k$-quasiconformal mapping on $G$, then it is verified that $f_z(z) \neq 0 $ for almost all $z \in G$ (e.g. \cite[Theorem IV-1.4 in p.166]{LehtoVirtanen:1973}).
Hence $\mu_f := f_{\bar{z}} / f_z$ defines a function belongs to $B(G)$.
$\mu_f$ is called the \textbf{complex dilatation} of $f$, and the quantity $k := k(f) := ||\mu_f||_{\infty}$ is called the \textbf{maximal dilatation} of $f$.
Conversely, the following fundamental existence and uniqueness theorem is known.

 \begin{thm}[The measurable Riemann mapping theorem]
\label{meas_RMT}
For a given measurable function $\mu \in B(\C)$, there exists a unique solution $f$ of the equation
\begin{equation}
\label{Belt_Eq}
f_{\bar{z}} = \mu f_z
\end{equation}
for which $f : \C \to \C$ is a quasiconformal mapping fixing the points 0 and 1. 
\end{thm}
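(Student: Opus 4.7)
The plan is to linearize the Beltrami equation via singular integral operators and invoke $L^p$ estimates on the Beurling transform, following the classical Ahlfors--Bers approach. First I would treat the case where $\mu$ has compact support. Introduce the Cauchy transform $P\varphi(z) = -\frac{1}{\pi}\int_\C \frac{\varphi(\zeta)}{\zeta - z}\,dA(\zeta)$ and the Beurling (Hilbert) transform $T\varphi(z) = -\frac{1}{\pi}\,\textup{p.v.}\int_\C \frac{\varphi(\zeta)}{(\zeta-z)^2}\,dA(\zeta)$, which satisfy $(P\varphi)_{\bar z} = \varphi$ and $(P\varphi)_z = T\varphi$ in the distributional sense. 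Seeking $f$ of the form $f(z) = z + P\varphi(z)$ with $\varphi$ compactly supported in $L^p(\C)$ reduces \eqref{Belt_Eq} to the \emph{linear} equation $(I - \mu T)\varphi = \mu$.

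Next I would invoke Calder\'on--Zygmund theory: $T$ is an isometry on $L^2(\C)$ and bounded on every $L^p(\C)$ for $1 < p < \infty$, with $\|T\|_{L^p} \to 1$ as $p \to 2$. Since $\|\mu\|_\infty \le k < 1$, choose $p > 2$ with $k\,\|T\|_{L^p} < 1$; then $I - \mu T$ is invertible on $L^p(\C)$ via a Neumann series, so $\varphi := (I-\mu T)^{-1}\mu$ solves the linear equation. For $p > 2$ the Cauchy potential $P\varphi$ is H\"older continuous by Morrey's embedding, hence $f(z) := z + P\varphi(z)$ is continuous on $\C$ with $f(z) - z \to 0$ at infinity.

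Third, I would verify $f$ is a homeomorphism. The Jacobian identity $J_f = (1-|\mu|^2)|f_z|^2$ is positive almost everywhere, which gives sense-preservation; injectivity and surjectivity follow by approximating $\mu$ by smooth $\mu_n$ (for which the existence and diffeomorphism property are classical by elliptic theory) and passing to the limit using stability of the topological degree together with $L^p$-convergence of the corresponding $\varphi_n$. For general $\mu \in B(\C)$ one splits $\mu = \mu_1 + \mu_2$ with $\mu_1$ supported in a disk and converts the remaining piece to a compactly supported Beltrami coefficient via a M\"obius inversion $z \mapsto 1/z$, then composes the two resulting quasiconformal maps. Finally, post-composition with a complex affine map enforces $f(0) = 0$ and $f(1) = 1$.

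For uniqueness, if $f$ and $g$ are two normalized solutions then the chain rule for complex dilatations gives $\mu_{f \circ g^{-1}} = 0$ almost everywhere, so $h := f \circ g^{-1}$ satisfies $h_{\bar z} = 0$ distributionally. Weyl's lemma makes $h$ holomorphic on $\C$; as a homeomorphism of $\CC$ it must be a complex affine map, and the normalization forces $h = \textup{id}$. I expect the analytic heart of the argument to be the sharp $L^p$-continuity of $T$ with operator norm tending to $1$ at $p = 2$ --- without this, the Neumann series inverting $I - \mu T$ would fail to converge when $\|\mu\|_\infty$ is close to $1$, and the whole construction collapses.
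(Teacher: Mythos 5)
The paper does not prove this statement at all: it is quoted as the classical measurable Riemann mapping theorem, with the reader referred to the standard monographs (Ahlfors, Lehto--Virtanen, Astala--Iwaniec--Martin), so there is no in-paper argument to compare against. Your proposal is, in outline, exactly the classical Ahlfors--Bers proof found in those references: reduce to compactly supported $\mu$, write $f = z + P\varphi$ so that the Beltrami equation becomes $(I-\mu T)\varphi = \mu$, invert by a Neumann series on $L^p(\C)$ for some $p>2$ using $\|T\|_{L^2}=1$ and the continuity of $p \mapsto \|T\|_{L^p}$ (a Riesz--Thorin consequence), get H\"older continuity of $P\varphi$ from $p>2$, handle general $\mu$ by splitting and conjugating by $z \mapsto 1/z$, and prove uniqueness via the composition formula for dilatations together with Weyl's lemma (both of which the paper records in Section~1.1). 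That is all correct, and you correctly identify the $L^p$ operator-norm continuity of $T$ at $p=2$ as the analytic crux.

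The one step stated too casually is the passage from ``local diffeomorphism for smooth $\mu_n$, plus $L^p$-convergence of $\varphi_n$'' to ``the limit $f$ is a homeomorphism.'' Stability of the topological degree under the uniform convergence $f_n \to f$ gives surjectivity of $f$ onto $\C$ and that the signed count of preimages of a generic point is one, but it does not by itself rule out folding, i.e.\ it does not give injectivity of the limit. The standard repair is to apply the same $L^p$ machinery to the inverses: $f_n^{-1}$ has Beltrami coefficient bounded by the same $k$, so the family $(f_n^{-1})$ is uniformly H\"older on compacta and one extracts a locally uniform limit $g$ with $f\circ g = g\circ f = \mathrm{id}$; alternatively one invokes the theorem that a locally uniform, nonconstant limit of $k$-quasiconformal homeomorphisms is again a $k$-quasiconformal homeomorphism. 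Either supplement closes the gap; without one of them the homeomorphism claim is not yet established. The remaining steps (normalization by an affine map, uniqueness via $\mu_{f\circ g^{-1}}=0$ and Weyl's lemma forcing $f\circ g^{-1}$ to be an affine automorphism fixing $0$ and $1$) are fine.
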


\no
The equation \eqref{Belt_Eq} is called the \textbf{Beltrami equation}.

Here we give some fundamental properties of quasiconformal mappings we will use later.
For the general theory of quasiconformal mappings in the plane, the reader is referred to \cite{Ahlfors:2006}, \cite{LehtoVirtanen:1973}, \cite{AstaraIwa:2009}, \cite{Hubbard:2006} and \cite{ImayoshiTaniguchi:1992}.

$f$ is $0$-quasiconformal if and only if $f$ is conformal, as discussed above. 
If $f$ is $k$-quasiconformal, then so is its inverse $f^{-1}$ as well.
A composition of a $k_1$- and $k_2$-quasiconformal map is $(k_1 + k_2)/(1 + k_1k_2)$-quasiconformal.
The composition property of the complex dilatation is the following; 
Let $f$ and $g$ be quasiconformal maps on $G$.
Then the complex dilatation $\mu_{g \circ f^{-1}}$ of the map $g \circ f^{-1}$ is given by
\begin{equation}
\label{Belt_formula}
\mu_{g \circ f^{-1}}(f) = \frac{f_z}{\closure{f_z}}\cdot\frac{\mu_g -\mu_f}{1-\mu_g \closure{\mu_f}}.
\end{equation}
Since a $0$-quasiconformal map is conformal, the above formula concludes that if $\mu_f = \mu_g$ almost everywhere in $G$ then $g \circ f^{-1}$ is conformal on $f(G)$.

As the case of conformal mappings, isolated boundary points of a domain $G$ are removable singularities of every quasiconformal mapping of $G$.
It follows from this property that quasiconformal and conformal mappings divide simply connected domains into the same equivalence classes.

	%
			\subsection{Schwarzian derivatives}
	%

Let $f$ be a non-constant meromorphic function with $f' \neq 0$. 
Then we define the \textbf{Schwarzian derivative} by means of
\begin{equation*}
S_f 
	:= \left(\frac{f''}{f'}\right)' -\frac12 \left(\frac{f''}{f'}\right)^2 
	\,\,=\,\, \frac{f'''}{f'} -\frac32 \left(\frac{f''}{f'}\right)^2.
\end{equation*}
It is known that $f$ is a M\"obius transformation if and only if $S_f \equiv 0$.
Further, a direct calculation shows that
$$
S_{f \circ g} = (S_f \circ g)g'^2 + S_g.
$$
Hence it follows the invariance property of $S_f$ that if $f$ is a M\"obius transformation then $S_{f \circ g} = S_g$.
One can interpret that the Schwarzian derivative measures the deviation of $f$ from M\"obius transformations.
In order to describe it precisely, we introduce the norm of the Schwarzian derivative $||S_f||_G$ of a function $f$ on $G$ by
$$
||S_f||_G := \sup_{z \in G} |S_{f}(z)| \eta_G(z)^{-2},
$$
where $\eta_G$ is a Poincar\'e density of $G$.
One of the important properties of $||S_f||$ is the following; 
Let $f$ be meromorphic on $G$ and $g$ and $h$ M\"obius transformations, then $||S_f||_G = ||S_{h \circ f \circ g}||_{g^{-1}(G)}$. 
It shows that $||S_f||$ is completely invariant under compositions of M\"obius transformations.
We note that if $G = \D := \{z \in \C : |z| < 1\}$, then $||S_f||_{\D} = \sup_{z\in\D}(1-|z|)^2|S_f(z)|$. 
For later use, we denote $||S_f||_{\D}$ by simply $||S_f||$.

	%
			\subsection{Bers embedding of Teichm\"uller spaces}
	%

Let us consider the family $\F$ of all quasiconformal automorphisms of the upper half-plane $\H^+$.
Since all mappings in $\F$ can be extended to homeomorphic self-mappings of the closure of $\H^+$, all elements of $\F$ are recognized as self-homeomorphisms of $\closure{\H^{+}}$.
We define an equivalence relation $\sim$ on $\F$ according to which $f \sim g$ for $f, g \in \F$ if and only if there exists a holomorphic automorphism $M$ of $\H^+$, a M\"obius transformation having the form $M(z) = (a z + b) /(cz+d),\, a,b,c,d \in \R$, such that $f \circ M = g$ on $\H^{+}$.
The equivalence relation on $\F$ induces the quotient space $\F/\sim$, which is called the \textbf{universal Teichm\"uller space} and denoted by $\TT$.
Theorem \ref{meas_RMT} with \eqref{Belt_formula} tells us that there is a one-to-one correspondence between $\TT$ and $B(\H^+)$.
If $f \sim g$, then the corresponding complex dilatations $\mu_f$ and $\nu_g$ are also said to be \textbf{equivalent}.

Another equivalent class of $\TT$ is given by the following profound observation due to Bers \cite{Bers:1960}.
Let $\mu \in B(\H^+)$. 
We extend $\mu$ to the lower half-plane $\H^- := \{ z \in \C : \Im z < 0\}$ by putting 0 everywhere.
By Theorem \ref{meas_RMT}, there exists a quasiconformal mapping $f^{\mu}$ fixing 0, 1, $\infty$ associated with such an extended $\mu$.
Then $f^{\mu}|_{\H^-}$ is conformal.

 \begin{thm}[see e.g. {\cite[Theorem III-1.2]{Lehto:1987}}]
The complex dilatations $\mu$ and $\nu$ are equivalent if and only if $f^{\mu}|_{\H^-} \equiv f^{\nu}|_{\H^-}$.
\end{thm}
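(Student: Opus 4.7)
The plan is to prove the two implications separately, with the reverse direction being a short construction and the forward direction requiring the conformal removability of quasicircles as its main tool.

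For the reverse direction, assume $f^\mu|_{\H^-} \equiv f^\nu|_{\H^-}$. Continuity of $f^\mu$ and $f^\nu$ on $\C$ propagates this equality to $\R$, so in particular the two quasidisks $\Omega^+ := f^\mu(\H^+) = f^\nu(\H^+)$ coincide. Picking any conformal map $R:\Omega^+ \to \H^+$, the compositions $R\circ f^\mu|_{\H^+}$ and $R\circ f^\nu|_{\H^+}$ are quasiconformal self-maps of $\H^+$ belonging to $\F$; their Beltrami coefficients remain $\mu$ and $\nu$ respectively (by \eqref{Belt_formula}, since postcomposition with a conformal map does not affect the complex dilatation), and they share identical boundary values on $\R$. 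Choosing the intertwining M\"obius $M$ to be the identity in the definition of $\sim$ thus establishes $\mu\sim\nu$.

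For the forward direction, assume $\mu\sim\nu$. Using the bijection between $\F$ and $B(\H^+)$, take $F^\mu,F^\nu\in\F$ with Beltrami coefficients $\mu,\nu$, each normalized to fix $0,1,\infty$. The three-point normalization combined with $\mu\sim\nu$ forces the M\"obius $M$ witnessing the equivalence to fix $0,1,\infty$ as well, so $M=\mathrm{id}$ and $F^\mu|_\R = F^\nu|_\R$. Since $f^\mu|_{\H^+}$ and $F^\mu$ have the same Beltrami coefficient on $\H^+$, the map $\Phi_\mu := f^\mu|_{\H^+}\circ (F^\mu)^{-1}:\H^+\to f^\mu(\H^+)$ is conformal by \eqref{Belt_formula}; define $\Phi_\nu$ analogously. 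I then define $H:\C\to\C$ by
\[
H := \begin{cases} f^\nu\circ (f^\mu)^{-1} & \text{on } \overline{f^\mu(\H^-)},\\ \Phi_\nu\circ \Phi_\mu^{-1} & \text{on } f^\mu(\H^+).\end{cases}
\]
A direct computation using $F^\mu|_\R = F^\nu|_\R$ verifies that the two definitions agree on the quasicircle $f^\mu(\R)$: for $w = f^\mu(x)$ with $x\in\R$, the first definition yields $f^\nu(x)$, while the second yields $\Phi_\nu(F^\mu(x)) = \Phi_\nu(F^\nu(x)) = f^\nu(x)$. Hence $H$ is a well-defined homeomorphism of $\C$ that is conformal on $\C\setminus f^\mu(\R)$.

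The main obstacle is to upgrade this piecewise conformality of $H$ to global conformality; this is exactly the content of the conformal removability of quasicircles, a standard but nontrivial result of quasiconformal theory. Once granted, $H$ extends to a M\"obius transformation of $\CC$, and the normalizations $H(0)=0,\,H(1)=1,\,H(\infty)=\infty$ (which follow from $f^\mu$ and $f^\nu$ fixing those points, since $0,1,\infty\in f^\mu(\R)$) force $H=\mathrm{id}$. Restricting to $f^\mu(\H^-)$ then gives $f^\nu\circ (f^\mu)^{-1}=\mathrm{id}$, that is $f^\mu \equiv f^\nu$ on $\H^-$.
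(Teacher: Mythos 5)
Your argument is correct, but note that the paper itself offers no proof of this statement --- it is quoted as a known result with a pointer to Lehto's book --- so the comparison must be with the standard argument given there. Your reverse direction (uniformize $f^\mu(\H^+)=f^\nu(\H^+)$ back to $\H^+$ by a conformal $R$ and observe that post-composition leaves the dilatations untouched) is essentially the classical one. Your forward direction, however, takes a genuinely different route: you transplant the problem to the image plane, glue $f^\nu\circ(f^\mu)^{-1}$ with $\Phi_\nu\circ\Phi_\mu^{-1}$ along the quasicircle $f^\mu(\widehat{\R})$, and invoke the conformal removability of quasicircles to conclude that the resulting homeomorphism $H$ is M\"obius. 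The classical proof instead glues in the source plane: one defines $w=f^\mu$ on $\H^-$ and $w=f^\mu\circ f_\mu^{-1}\circ f_\nu$ on $\H^+$ (where $f_\mu,f_\nu\in\F$ are the normalized self-maps of $\H^+$), checks that $w$ is a homeomorphism of $\C$ which is quasiconformal off $\R$ with Beltrami coefficient $\nu$ a.e., and then applies the uniqueness clause of Theorem \ref{meas_RMT} to get $w=f^\nu$, hence $f^\mu=f^\nu$ on $\H^-$. That route needs only the (elementary) removability of the real line for quasiconformal homeomorphisms, whereas yours needs the removability of quasicircles for conformal homeomorphisms --- a true and standard fact, but a strictly heavier tool whose usual proof itself reduces to the line case by pre-composing with a quasiconformal map. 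What your version buys is a pleasant symmetry (the Ahlfors--Bers-type factorization $f^\mu|_{\H^+}=\Phi_\mu\circ F^\mu$ is made explicit, which is the decomposition underlying the Bers embedding later in Section 1.3); what it costs is the extra removability input, which you should at least cite precisely since it is the crux of the argument.
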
 

\no
By the above theorem, the universal Teichm\"uller space $\TT$ can be understood as the set of the normalized conformal mappings $f^{\mu}|_{\H^-}$ which can be extended quasiconformally to the upper half-plane $\H^+$.
Recall that for a M\"obius transformation $f$ we have $S_{f \circ g} = S_g$.
Therefore, it is natural to consider the mapping
\begin{equation}
\label{mapping_TQ}
\TT \ni [f] \mapsto S_{f^{\mu}|_{\H^-}} \in \QQ,
\end{equation}
between $\TT$ and $\QQ$, where $\QQ$ is the space of functions $\phi$ holomorphic in $\H^-$ for which the hyperbolic sup norm $||\phi||_{\H^{-}} = \sup_{z \in \H^-} 4(\Im z)^2|\phi(z)|$ is finite.

In order to investigate a detailed property of the mapping \eqref{mapping_TQ}, we define a metric on $\TT$ by
\begin{equation*}
d_t(p,q) 
	:= \frac12 \inf_{p, q \in \TT} \{\log K(g \circ f^{-1}) : f \in p,\,g \in q\},
\end{equation*}
where $K(f) := (1 + k(f))/(1-k(f))$, and on $\QQ$ by
\begin{equation*}
d_q (\varphi_1, \varphi_2) := ||\varphi_1 - \varphi_2||_{\infty}.
\end{equation*}
$d_t$ is called the \textbf{Teichm\"uller distance}.
As a consequence of the fact that $d_t$ and $d_q$ are topologically equivalent, we obtain the following theorem which provides a new model of the universal Teichm\"uller space.
 \begin{thm}
The mapping \eqref{mapping_TQ} is a homeomorphism of the universal Teichm\"uller space $\TT$ onto its image in $\QQ$.
\end{thm}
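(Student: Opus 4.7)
The plan is to verify that the assignment $[\mu] \mapsto S_{f^{\mu}|_{\H^-}}$ is well-defined, injective, and bi-continuous, ultimately reducing the topological part to the asserted equivalence of $d_t$ and $d_q$.

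\textbf{Well-definedness.} First I would check that $S_{f^{\mu}|_{\H^-}}$ actually lies in $\QQ$: since $f^{\mu}|_{\H^-}$ is univalent on $\H^-$ and admits a quasiconformal extension to $\C$, a classical Kraus--Nehari--type estimate (together with the Möbius invariance of $\|S_f\|$ recorded in \S1.2) furnishes the uniform bound $\|S_{f^{\mu}|_{\H^-}}\| \leq 6$, so the Schwarzian is a bounded hyperbolic quadratic differential. Second, if $\mu \sim \nu$, the previous theorem (Bers) gives $f^{\mu}|_{\H^-} \equiv f^{\nu}|_{\H^-}$, hence the same Schwarzian; so the map descends to the quotient $\TT$.

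\textbf{Injectivity.} Suppose $S_{f^{\mu}|_{\H^-}} = S_{f^{\nu}|_{\H^-}}$. Using the composition rule $S_{M \circ g} = S_g$ for a Möbius transformation $M$, the identity principle gives $f^{\nu}|_{\H^-} = M \circ f^{\mu}|_{\H^-}$ for some $M$. Because $f^{\mu}$ and $f^{\nu}$ are both quasiconformal on $\C$ fixing $0,1,\infty$, and these three points lie on $\de\H^-$, taking boundary limits from $\H^-$ forces $M$ to fix $0,1,\infty$, so $M = \textup{id}$. Then $f^{\mu}|_{\H^-} \equiv f^{\nu}|_{\H^-}$, and the Bers theorem yields $\mu \sim \nu$, i.e.\ $[\mu] = [\nu]$ in $\TT$.

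\textbf{Continuity.} For the forward direction I would appeal to the continuous dependence of the normalized solution of the Beltrami equation on its coefficient (Ahlfors--Bers): a small Teichmüller perturbation $[\nu]$ of $[\mu]$ can be represented by a Beltrami coefficient differing from $\mu$ in $L^\infty(\H^+)$-norm by a small amount, and the corresponding normalized solutions $f^{\nu}$ converge to $f^{\mu}$ locally uniformly on $\C$; by the Cauchy integral formula this passes to convergence of the third derivatives on compact subsets of $\H^-$, and a uniform bound on $(\Im z)^2 |S_{f^{\nu}}|$ on $\H^-$ allows one to upgrade this to $d_q$-convergence.

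\textbf{Continuity of the inverse and main obstacle.} This is the hard direction. Given that $d_q(S_{f^{\mu_n}|_{\H^-}}, S_{f^{\mu}|_{\H^-}}) \to 0$, one must produce quasiconformal extensions of the new $f^{\mu_n}|_{\H^-}$ whose Beltrami coefficients approximate $\mu$ modulo the equivalence on $\F$. The natural tool is the Ahlfors--Weill reflection: when $\|S_f\|$ is small enough, an explicit formula produces a quasiconformal extension of a conformal $f$ across $\R$ whose Beltrami coefficient depends linearly on $S_f$, giving local control of $d_t$ by $d_q$. Patching this local homeomorphism into a global one requires the left translation structure of $\TT$ (so one may reduce to perturbations of the basepoint) together with a covering/connectivity argument; this step, which is exactly the non-trivial content behind the bracketed assertion that $d_t$ and $d_q$ are topologically equivalent, is where the real work lies. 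Once that equivalence is taken for granted, the homeomorphism statement follows from the three items above.
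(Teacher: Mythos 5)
Your proposal is consistent with the paper, which in fact offers no proof at all: the theorem is stated there as an immediate consequence of the (asserted, not proven) topological equivalence of the metrics $d_t$ and $d_q$, so your reduction to that same equivalence is exactly the paper's route. The supporting details you add --- well-definedness via the Kraus--Nehari bound, injectivity via the M\"obius normalization at $0,1,\infty$ combined with Bers' theorem, forward continuity via Ahlfors--Bers, and the Ahlfors--Weill reflection as the engine behind continuity of the inverse --- are the standard way to flesh this out and correctly locate where the genuine work lies.
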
 
\no
The mapping \eqref{mapping_TQ} is called the \textbf{Bers embedding} of Teichm\"uller space.
We denote the image of $\TT$ under \eqref{mapping_TQ} by $\TT_1$.
It is known that $\TT_1$ is a bounded, connected and open subset of $\QQ$ (\cite{Ahlfors:1963}).

From the viewpoint of the theory of univalent functions, $\TT_1$ is characterized as follows.
Let $\A$ be the family of functions $f$ holomorphic in $\D$ with $f(0)=0$ and $f'(0)=1$ and $\S$ be the subfamily of $\A$ whose components are univalent on $\D$.
We define $\S(k)$ and $\S^*(k)$ as the families of functions in $\S$ which can be extended to $k$-quasiconformal mappings of $\C$ and $\CC$.
Set $\S(1) := \cup_{k \in [0,1)}\S(k)$.
Then $\TT_1$ is written by
$$
\TT_1 = \{ S_f : f \in \S(1)\}.
$$

We give a short account of the relation to the Teichm\"uller spaces.
Let $S_1$ and $S_2$ be Riemann surfaces and $G_1$ and $G_2$ the covering groups of $\H$ over $S_1$ and $S_2$, respectively.
For the Riemann surfaces $S_1$ and $S_2$, the Teichm\"uller spaces $\TT_{S_1}$ and $\TT_{S_2}$ are defined.
If $G_1$ is a subgroup of $G_2$, then the relation $\TT_{S_2} \subset \TT_{S_1}$ holds.
In particular, if $G_1$ is trivial, then $T_{S_1}$ is the universal Teichm\"uller space which includes all the other Teichm\"uller spaces as subspaces.
For this reason the name ``universal" is used to $\TT_1$.

\

	%
			\section{\bf Quasiconformal extensions of univalent functions}
	%

In Section 1 we have introduced $\S(k)$ to characterize the universal Teichm\"uller space $\TT_1$.
Before entering the main part concerning with Loewner theory, we present some results of the general study of univalent functions and quasiconformal extensions.

	%
			\subsection{Univalent functions}
	%

First of all, we review some results for the class $\S$. 
A number of properties for this class have been investigated by elementary methods.

$\Sigma$, the family of univalent holomorphic maps $g(\zeta) = \zeta + \sum_{n=0}^{\infty}b_n \zeta^{-n}$ mapping $\D^* :=\CC\,\backslash\, \closure{\D}$ into $\CC \,\backslash\,\{0\}$, also plays a key role in the theory of univalent functions.
For $f(z) = z + \sum_{n=2}^{\infty}a_{n}z^{n} \in \S$, define 
\begin{equation*}
\label{ftog}
g(\zeta) := \frac1{f(1/\zeta)} = \zeta - a_{2} + \frac{a_{2}^{2} -a_{3}}{\zeta} + \cdots \hspace{15pt}(\zeta \in \D^{*}).
\end{equation*} 
Then $g \in \Sigma$.
On the other hand it is not always true that for a given $g \in \Sigma$, $f(z) := 1/g(1/z) \in \S$ because $g$ may take 0.
Hence $\Sigma_{0} := \{g \in \Sigma : \textup{$g(\zeta) \neq 0$ on $\zeta \in \D^{*}$}\}$ and $\S$ have a one-to-one correspondence.

 \begin{thm}[Gronwall's area theorem]
For a $g \in \Sigma$, we have
$$
m(\C - g(\D^{*})) = \pi\left(1- \sum_{n=1}^{\infty}n|b_{n}|^{2}\right),
$$
where $m$ stands for the Lebesgue measure.
\end{thm}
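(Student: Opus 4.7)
The plan is to use Green's theorem (in its complex form) to express the area enclosed by the image of a circle $|\zeta|=r$ under $g$, expand everything as a Fourier series, and pass to the limit $r\to 1^+$.

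First I would fix $r>1$ and consider the Jordan curve $\gamma_r:=g(\{|\zeta|=r\})$; since $g$ is univalent on $\D^*$ with $g(\infty)=\infty$, this curve bounds a Jordan domain $D_r\subset\C$, and as $r$ decreases toward $1$ the open sets $g(\{|\zeta|>r\})$ increase to $g(\D^*)$, so $D_r$ decreases to $\C\setminus g(\D^*)$. Monotone convergence then gives
\begin{equation*}
m(\C\setminus g(\D^*))=\lim_{r\to 1^+} m(D_r).
\end{equation*}

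Next I would compute $m(D_r)$ via the complex area formula
\begin{equation*}
m(D_r)=\frac{1}{2i}\oint_{\gamma_r}\overline{w}\,dw,
\end{equation*}
parametrizing $w(\theta)=g(re^{i\theta})=re^{i\theta}+b_0+\sum_{n=1}^{\infty}b_n r^{-n}e^{-in\theta}$ for $\theta\in[0,2\pi]$, so that $dw/d\theta=ire^{i\theta}-i\sum_{n=1}^{\infty}nb_n r^{-n}e^{-in\theta}$. Substituting the Laurent expansion for $\overline{w}$ as well, I would multiply the two series and integrate term by term in $\theta$; by the orthogonality relations $\int_0^{2\pi}e^{ik\theta}\,d\theta=2\pi\delta_{k,0}$, only the constant Fourier modes survive, producing
\begin{equation*}
m(D_r)=\pi\left(r^2-\sum_{n=1}^{\infty}n|b_n|^2 r^{-2n}\right).
\end{equation*}
Termwise integration is justified by the uniform convergence of the Laurent series of $g$ and $g'$ on the compact set $|\zeta|=r$ for any fixed $r>1$.

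Finally, since $m(D_r)\geq 0$ for every $r>1$, the partial sums $\sum_{n=1}^{N}n|b_n|^2 r^{-2n}$ are bounded by $r^2$; letting first $N\to\infty$ (monotone convergence on the counting measure) and then $r\to 1^+$ (monotone convergence again, as $r^{-2n}\nearrow 1$), I obtain $\sum_{n=1}^{\infty}n|b_n|^2\leq 1$ and the evaluation
\begin{equation*}
m(\C\setminus g(\D^*))=\lim_{r\to 1^+}\pi\left(r^2-\sum_{n=1}^{\infty}n|b_n|^2 r^{-2n}\right)=\pi\left(1-\sum_{n=1}^{\infty}n|b_n|^2\right),
\end{equation*}
which is the desired identity. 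The most delicate point is the exchange of the limit in $r$ with the infinite sum; I expect the cleanest way around it is to argue monotonicity separately in $n$ and $r$, so that no dominated convergence theorem with a non-trivial majorant is required.
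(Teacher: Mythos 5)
Your argument is correct and is precisely the classical proof of the area theorem (Green's theorem applied to $g(\{|\zeta|=r\})$ for $r>1$, termwise integration of the Laurent series, then $r\to 1^+$ with monotone convergence); the paper itself states the theorem without proof, so there is nothing it does differently. The only point worth making explicit is the orientation check — since $g$ is sense-preserving and maps $\{|\zeta|>r\}$ to the unbounded complementary component, the image curve is positively oriented around $D_r$, so the area formula returns a nonnegative quantity as you use in the final step.
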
 

\no
In particular $\sum_{n=1}^{\infty}n|b_{n}|^{2} \leq 1$. In particular $|b_{1}| \leq 1$. Here the equality $|b_{1}| =1$ holds if and only if $g(\zeta) = \zeta + b_{0} + e^{i\theta}/\zeta$.
Since $(f(z^{n}))^{1/n} \in \S$ for all $f \in \S$ and all $n \in \N$, we have

$$
\frac{1}{\sqrt{f(1/\zeta^{2})}} = \zeta - \frac{1}{2}a_{2} \cdot \frac{1}{\zeta} + \cdots \in \Sigma \hspace{15pt}(\zeta \in \D^{*}).
$$
Hence by the estimate for $|b_{1}|$, we obtain the following.

 \begin{thm}[\cite{Bieberbach:1916}]
\label{bie-a2}
If $f \in \S$, then $|a_{2}| \leq 2$.
Equality holds if and only if $f(z)$ is a rotation of the Koebe function defined by
\begin{equation}
\label{Koebef}
K(z) := \frac{z}{(1-z)^{2}} = \frac14\left(\left(\frac{1+z}{1-z}\right)^{2}-1\right) = z + \sum_{n=2}^{\infty}nz_{n}.
\end{equation}
\end{thm}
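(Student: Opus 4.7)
The setup in the excerpt already does most of the work: the map
\[
g(\zeta) \;=\; \frac{1}{\sqrt{f(1/\zeta^{2})}} \;=\; \zeta - \tfrac{1}{2}a_{2}\cdot\tfrac{1}{\zeta} + \cdots
\]
belongs to $\Sigma$. So the plan for the inequality $|a_{2}|\le 2$ is simply to read off the coefficient $b_{1} = -\tfrac12 a_{2}$ from this expansion and plug into the estimate $|b_{1}|\le 1$ that was isolated as a consequence of Gronwall's area theorem. This is a one-line deduction once the transformation is in hand.

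The one step that deserves a careful word is why $\sqrt{f(z^{2})}$ is a legitimate element of $\S$, because that is what legitimises forming $g$ and applying the area theorem to it. The plan is to note that $f(z^{2}) = z^{2}\bigl(1 + a_{2}z^{2} + a_{3}z^{4}+\cdots\bigr)$ has a holomorphic square root $h(z) = z\sqrt{1 + a_{2}z^{2}+\cdots}$ in $\D$, normalised by $h'(0)=1$, and that $h$ is odd. Univalence of $h$ follows from the univalence of $f$: if $h(z_{1})=h(z_{2})$ then $f(z_{1}^{2})=f(z_{2}^{2})$ forces $z_{1}^{2}=z_{2}^{2}$, i.e.\ $z_{2}=\pm z_{1}$, and the minus sign combined with oddness yields $h(z_{1})=0$, hence $z_{1}=0$. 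Once $h\in \S$ is established, $g(\zeta)=1/h(1/\zeta)$ is in $\Sigma_{0}\subset\Sigma$ and the odd symmetry of $h$ forces $g$ to be odd as well, which (I foresee) is the key fact for the equality case.

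For the equality statement, the main point is that $|a_{2}|=2$ forces $|b_{1}|=1$, and the second half of Gronwall's theorem says this can happen only when $g(\zeta) = \zeta + b_{0} + e^{i\theta}/\zeta$. Because $g$ is odd, $b_{0}=0$, so $g(\zeta)=\zeta + e^{i\theta}/\zeta$. Inverting the substitution,
\[
h(w) \;=\; \frac{w}{1 + e^{i\theta}w^{2}}, \qquad f(z) \;=\; h(\sqrt{z})^{2} \;=\; \frac{z}{(1 + e^{i\theta}z)^{2}},
\]
which is the rotation $f(z) = -e^{-i\theta}K(-e^{i\theta}z)$ of the Koebe function \eqref{Koebef}. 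Conversely, a direct computation of $a_{2}$ for $K$ (from the series $K(z)=z+\sum_{n\ge2}nz^{n}$) and the fact that $|a_{2}|$ is rotation-invariant gives the ``if'' direction.

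The only genuinely delicate point in the plan is the univalence of $\sqrt{f(z^{2})}$; everything else is either a coefficient comparison or an invocation of a result already stated. I would therefore organise the write-up as: (i) verify $h\in\S$ and is odd, (ii) form $g\in\Sigma$ with odd expansion and apply Gronwall, (iii) analyse the equality case using oddness to pin down $b_{0}=0$ and recover the Koebe function.
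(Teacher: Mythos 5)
Your proposal is correct and follows essentially the same route the paper sketches: apply the odd square-root transform $\zeta\mapsto 1/\sqrt{f(1/\zeta^{2})}$ to land in $\Sigma$, invoke the bound $|b_{1}|\le 1$ from Gronwall's area theorem, and use its equality case (together with oddness to kill $b_{0}$) to identify the extremal as a rotation of the Koebe function. You supply details the survey leaves implicit (univalence of $\sqrt{f(z^{2})}$ and the equality analysis), but the argument is the same.
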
 
Then, in a footnote of the paper \cite{Bieberbach:1916} Bieberbach wrote ``\textit{Vielleicht ist \"uberhaupt $k_{n} = n$}'' (where $k_{n} := \max_{f \in \S}|a_{n}|$) which means that \textit{probably $k_{n} =n$ in general}. 
This statement is called the \textbf{Bieberbach conjecture}.
In 1923 L\"owner \cite{Loewner:1923} proved $|a_{3}| \leq 3$, 
in 1955 Garabedian and Schiffer \cite{GarabedianSchiffer:1955} proved $|a_{4}| \leq 4$, 
in 1969 Ozawa\cite{Ozawa:1969b, Ozawa:1969a} and 
in 1968 Pederson \cite{Pederson:1968} proved $|a_{6}| \leq 6$ independently and 
in 1972 Pederson and Schiffer \cite{PedersonSchiffer:1972} proved $|a_{5}| \leq 5$.
Finally, in 1985 de Branges \cite{deBranges:1985} proved $|a_{n}| \leq n$ for all $n$.
For the historical development of the conjecture, see e.g. \cite{Zorn:1986} and \cite{Koepf:2007}.
The coefficient problem for $\Sigma$  appears to be even more difficult than for $\S$. 
One reason is that there can be no single extremal function for all coefficients as the Koebe function.
In 1914, Gronwall \cite{Gronwall:1914} proved $|b_{1}| \leq 1$ as above,
in 1938 Schiffer \cite{Schiffer:1938} proved $|b_{2}| \le 2/3$ and
in 1955 Garabedian and Schiffer \cite{GarabedianSchiffer:1955b} proved $|b_{3}| \le 1/2 + \exp(-6)$.
We do not even have a general coefficient conjecture for $\Sigma$.
For this problem, see e.g. \cite{NehariNetanyahu:1957}, \cite{SchoberWilliams:1984} and \cite{CarlesonJones:1992}.

We get back to the main story. 
The next is an important application of Theorem \ref{bie-a2}.

 \begin{thm}[The Koebe 1/4-theorem]
\label{1/4-theorem}
If $f \in \S$, then $f(\D)$ contains the disk centered at the origin with radius $1/4$.
\end{thm}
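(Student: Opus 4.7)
The plan is to apply Theorem \ref{bie-a2} (the Bieberbach $|a_2|\le 2$ bound) twice, once to $f$ itself and once to a cleverly modified function. The key trick is: if some value $w\in\C\setminus\{0\}$ is \emph{omitted} by $f$, then one can manufacture another member of $\S$ whose second Taylor coefficient records $1/w$.

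First I would fix $w\in\C\setminus\{0\}$ with $w\notin f(\D)$ and set
$$
g(z) := \frac{w f(z)}{w - f(z)}.
$$
Since $w$ is not in the image of $f$, the denominator never vanishes on $\D$, so $g$ is holomorphic there. Univalence of $g$ follows from univalence of $f$ together with the fact that $\zeta\mapsto w\zeta/(w-\zeta)$ is a M\"obius transformation (injective off its pole). A short normalization check gives $g(0)=0$, $g'(0)=f'(0)=1$, so $g\in\S$.

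Next I would expand $g$ to second order. Writing $f(z)=z+a_2 z^2+O(z^3)$ and using the geometric expansion $1/(w-f(z)) = w^{-1}(1+f(z)/w+\cdots)$, a direct computation yields
$$
g(z) = z + \Bigl(a_2 + \tfrac{1}{w}\Bigr)z^2 + O(z^3).
$$
Applying Theorem \ref{bie-a2} to $g$ gives $|a_2 + 1/w|\le 2$, while applying it to $f$ gives $|a_2|\le 2$. The triangle inequality then yields
$$
\tfrac{1}{|w|} = \Bigl|\bigl(a_2+\tfrac{1}{w}\bigr) - a_2\Bigr| \le 4,
$$
so $|w|\ge 1/4$. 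Since this holds for every omitted value $w$, the disk $\{|w|<1/4\}$ lies entirely inside $f(\D)$.

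The only step requiring genuine care is the verification that $g$ really belongs to $\S$, in particular that it is univalent (the algebraic Taylor expansion and the final triangle-inequality estimate are routine once $g\in\S$ is established). Everything else is a direct consequence of Bieberbach's theorem, which is why this proof is often regarded as the most striking immediate payoff of the $|a_2|\le 2$ bound.
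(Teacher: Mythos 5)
Your proof is correct and is precisely the argument the paper has in mind: the text introduces Theorem \ref{1/4-theorem} as ``an important application of Theorem \ref{bie-a2},'' and your omitted-value transform $g = wf/(w-f)$ with the two applications of the bound $|a_2|\le 2$ is the standard way that application is carried out. The computation of the second coefficient of $g$ and the univalence check via the M\"obius map $\zeta\mapsto w\zeta/(w-\zeta)$ are both sound, so nothing is missing.
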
 

Since the class $\S$ is closed with respect to the Koebe transform
\begin{equation}
\label{Koebe_trans}
f_K(z) := \frac{f(\frac{z + \zeta}{1+\bar{\zeta}z}) -f(\zeta)}{(1-|\zeta|^2)f'(\zeta)} = z + \left(\frac12 (1-|\zeta|^2)\frac{f''(\zeta)}{f'(\zeta) }-\bar{\zeta}\right)z^2 + \cdots,
\end{equation}
applying Theorem \ref{bie-a2} to \eqref{Koebe_trans} we have the inequality
$$
\left|
(1-|z|^{2}) \frac{f''(z)}{f'(z)} -2\bar{z}
\right|
\leq 4.
$$
It derives the distortion theorems for the class $\S$.

 \begin{thm}
\label{kdistortion}
If $f \in \S$, then 
$$
\begin{array}{ccc}
\dstyle \left|\frac{zf''(z)}{f'(z)} - \frac{2|z|^{2}}{1-|z|^{2}}\right| \le \frac{4|z|}{1-|z|^{2}},\\[12pt]
\dstyle \frac{1-|z|}{(1+|z|)^{3}} \le |f'(z)| \le \frac{1+|z|}{(1-|z|)^{3}},\\[12pt]
\dstyle \frac{|z|}{(1+|z|)^{2}} \le |f(z)| \le \frac{|z|}{(1-|z|)^{2}},\\[12pt]
\dstyle \frac{1-|z|}{1+|z|} \le \left|\frac{zf'(z)}{f(z)}\right| \le \frac{1+|z|}{1-|z|},
\end{array}
$$
for all $z \in \D$.
In each case, equality holds if and only if $f$ is a rotation of the Koebe function \eqref{Koebef}.
\end{thm}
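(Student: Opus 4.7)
The plan is to base everything on the single differential inequality $\left|(1-|z|^{2})f''(z)/f'(z) - 2\bar{z}\right| \le 4$ that the excerpt has just deduced by feeding the Koebe transform $f_{K}$ into Bieberbach's bound $|a_{2}| \le 2$ (Theorem \ref{bie-a2}). Multiplying inside the modulus by $z$ and taking real parts turns it into
\begin{equation*}
\frac{2r^{2}-4r}{1-r^{2}} \,\le\, \Re\frac{zf''(z)}{f'(z)} \,\le\, \frac{2r^{2}+4r}{1-r^{2}}, \qquad r = |z|,
\end{equation*}
which, via $\frac{\partial}{\partial r}\log|f'(re^{i\theta})| = \frac{1}{r}\Re(zf''(z)/f'(z))$ together with partial-fraction decompositions such as $\frac{2(r+2)}{(1-r)(1+r)} = \frac{3}{1-r} + \frac{1}{1+r}$, integrates from $0$ to $|z|$ (using $f'(0)=1$) to the first (distortion) double inequality.

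The upper growth bound then follows by writing $|f(z)| \le \int_{0}^{|z|} |f'(se^{i\arg z})|\,ds$, inserting the pointwise upper bound on $|f'|$, and using the elementary primitive $\int_{0}^{r}\frac{1+s}{(1-s)^{3}}\,ds = \frac{r}{(1-r)^{2}}$. The lower growth bound is the only genuinely nontrivial step; here I would invoke the Koebe $1/4$-theorem (Theorem \ref{1/4-theorem}). If $|f(z)| \ge 1/4$ the inequality is immediate because $r/(1+r)^{2} \le 1/4$ on $[0,1)$; otherwise the segment $[0,f(z)]$ lies in $\{|w|<1/4\} \subset f(\D)$, so it pulls back under $f^{-1}$ to a curve $\Gamma$ in $\D$ joining $0$ to $z$. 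A crossing (coarea) argument---every circle $|\zeta|=s$, $s \in [0,|z|]$, is met by $\Gamma$---combined with the lower distortion bound then yields
\begin{equation*}
|f(z)| = \int_{\Gamma} |f'(\zeta)|\,|d\zeta| \,\ge\, \int_{0}^{|z|}\frac{1-s}{(1+s)^{3}}\,ds \,=\, \frac{|z|}{(1+|z|)^{2}}.
\end{equation*}

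For the third (ratio) inequality I would feed the growth theorem just proved back into the Koebe transform itself. Evaluating \eqref{Koebe_trans} at $w = -\zeta$ collapses the inner argument of $f$ to $0$, giving $f_{K}(-\zeta) = -f(\zeta)/\bigl((1-|\zeta|^{2})f'(\zeta)\bigr)$; the two-sided growth bound $\frac{|\zeta|}{(1+|\zeta|)^{2}} \le |f_{K}(-\zeta)| \le \frac{|\zeta|}{(1-|\zeta|)^{2}}$, applied to $f_{K} \in \S$, and a bit of algebra (using $(1-|\zeta|^{2})/(1\pm|\zeta|)^{2}=(1\mp|\zeta|)/(1\pm|\zeta|)$ and reciprocating) produce exactly $\frac{1-|\zeta|}{1+|\zeta|} \le |\zeta f'(\zeta)/f(\zeta)| \le \frac{1+|\zeta|}{1-|\zeta|}$.

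The equality statement is handled by tracing each integration backwards: saturation in any of the six bounds forces the starting modulus inequality to be an equality along the whole radial segment from $0$ to $z$, which is rigid enough to pin $(1-|z|^{2})f''(z)/f'(z)-2\bar{z}$ down as a constant of modulus $4$ aligned with $z$; this identifies $f$, after a rotation of the independent variable, with the Koebe function $K$ in \eqref{Koebef}, and one checks by direct substitution that $K$ saturates every bound. The main obstacle is the lower growth estimate: a pointwise lower bound on $|f'|$ is by itself useless, because the $f^{-1}$-image of a radial segment in $f(\D)$ need not be anywhere near radial, so one is forced to couple it with the Koebe $1/4$-theorem and the crossing argument to turn the pointwise bound into an integrated one.
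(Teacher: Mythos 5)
Your proposal is correct and follows exactly the route the paper indicates: it starts from the differential inequality $\bigl|(1-|z|^{2})f''(z)/f'(z)-2\bar{z}\bigr|\le 4$ obtained by applying Theorem \ref{bie-a2} to the Koebe transform \eqref{Koebe_trans}, integrates radially for the distortion and upper growth bounds, uses the Koebe $1/4$-theorem with the pull-back of the segment $[0,f(z)]$ for the lower growth bound, and evaluates $f_K$ at $-\zeta$ for the ratio bound. This is the standard argument (Pommerenke, pp.~21--22) that the paper cites rather than reproduces, and your details, including the equality analysis, check out.
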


\no
In particular, the third estimate implies that $f$ is locally uniformly bounded. 
Hence $\S$ forms a normal family.
Further, Hurwitz's theorem states that if a sequence of univalent functions on $\D$ converges to a holomorphic function locally uniformly on $\D$, then the limit function is univalent or constant. Since constant functions do not belong to $\S$ by the normalization, we conclude that $\S$ is compact in the topology of locally uniform convergence.

	%
			\subsection{Examples of quasiconformal extensions}
	%

For a given conformal mapping $f$ of a domain $D$, we say that $f$ has a \textbf{quasiconformal extension} to $\C$ if there exists a $k$-quasiconformal mapping $F$ such that its restriction $F|_{D}$ is equal to $f$.
For some fundamental conformal mappings, we can construct quasiconformal extensions explicitly.
Below we summarize such examples which are sometimes useful.
Some more examples can be found in \cite[p.78]{ImayoshiTaniguchi:1992}.
We remark that \eqref{qc_def} is written by the polar coordinates as
$$
\left|
\frac{ir\de_r f(re^{i\theta}) -\de_{\theta} f(re^{i\theta})}{ir\de_r f(re^{i\theta}) +\de_{\theta} f(re^{i\theta})}
\right| \le k,
$$
where $\de_r := \de /\de r$ and $\de_{\theta} := \de/\de \theta$.

 \begin{ex}
A very simple but important example is
$$
f(z) =\left\{
\begin{array}{ll}
\dstyle z + \frac{k}z, & |z| >1,\\[8pt]
z + k \bar{z}, & |z| \le 1.
\end{array} 
\right.
$$
where $k \in [0,1)$. Then $|f_{\bar{z}}/f_z| = k$ on $|z|\le 1$.
The case $k=1$ reflects the Joukowsky transform in $|z|>1$, though in this case $f$ is not a quasiconformal mapping any more.
\end{ex}

 \begin{ex} 
An identity mapping of $\D$ has trivially a quasiconformal extension. 
In fact, the following extension,
$$
f(z) =\left\{
\begin{array}{ll}
re^{i\theta}, & r < 1,\\
\phi(r) e^{i\theta}, & r \ge 1,
\end{array} 
\right.
$$
is given, where $\phi : [1, \infty) \to [1, \infty)$ is bi-Lipschitz continuous and injective with $\phi(1)=1$ and $\phi(\infty) =\infty$.
The maximal dilatation is given by
$$
|\mu_f| = \left|\frac{\phi(r)- r\phi'(r)}{\phi(r)+ r\phi'(r)}\right|.
$$
Let $M > 1$ be a Lipschitz constant. 
Then $1/M \le \phi'(r) \le M$ and $1/M \le \phi(r)/r \le M$ and therefore $1 \le r\phi'(r)/\phi(r) \le M^2$.
We conclude that the extension is $|\mu_f| \le |M^2 -1 |/|M^2 +1|$-quasiconformal.
\end{ex}

 \begin{ex}
Let $K(z) := (1+z)/(1-z)$ be the Cayley map and $P_{\beta}(z) := z^\beta$. 
For a fixed $\beta \in (0,2)$, the function 
$$
f(z) := (P_{\beta} \circ K)(z)
$$
maps $\D$ onto the sector domain $\Delta(-\beta, \beta) := \{z : -\pi\beta/2 < \arg z < \pi\beta/2\}$.
We shall construct a quasiconformal extension of $f$.
The function $g(z) := (-P_{2-\beta} \circ - K)(z)$ maps $\C\,\backslash\,\closure{\D}$ onto $\Delta(\beta, 4-\beta)$.
But in this case $f(e^{i\theta}) \neq g(e^{i\theta})$ for each $\theta \in (0,2\pi)$.
In order to sew these two functions on their boundaries, define $h(re^{i\theta}) := r^{\beta/(2-\beta)} e^{i\theta}$.
Then $(-P_{2-\beta} \circ h \circ - K)(z)$ takes the same value as $f$ on $\de\D$.
Hence it gives a quasiconformal extension of $f$.
A calculation shows that its maximal dilatation is $|1-\beta|$.
\end{ex}

 \begin{ex}
For a given $\lambda \in (-\pi/2, \pi/2)$, a function defined by
$$
f(re^{i\theta}) = e^{i\theta} \exp(e^{i\lambda} \log r)
$$
is a $\tan (\lambda / 2)$-quasiconformal mapping of $\C$ onto $\C$.
On the other hand, since the above $f$ maps a radial segment $[0,\infty)$ to a logarithmic spiral, it is not differentiable at the origin.
By calculation we have $|f|= \exp(\cos \lambda \log r)$ and $\arg f = \theta + \sin \lambda \log r$.
Therefore $f$ with a proper rotation gives a $\tan (\lambda / 2)$-quasiconformal extension for a function $f(z) = cz$ on $\D$ or $\D^* := \CC\,\backslash\,\closure{\D}$, where $c$ is some constant.
\end{ex}

 \begin{ex}
The functions $K(z) = z/(1-z)^{2}$ and $f(z) = z - z^{2}/2$ are typical examples in $\S$ which do not have any quasiconformal extensions
The first one is the Koebe function \eqref{Koebef} which maps $\D$ onto $\C\,\backslash\,(-\infty, -1/4]$.
There does not exist a homeomorphism which maps $\D^*$ onto $(-\infty, -1/4]$.
As for the second function, $\de\D$ is mapped to a cardioid which has a cusp at $z = 1$.

\end{ex}

	%
			\subsection{Extremal problems on $\S(k)$}
	%

In order to investigate the structure of the family of functions, the extremal problems sometimes provide us quite beneficial information.
The Bieberbach conjecture is one of the most known such problems. 
A similar problem for $\S(k)$ and $\Sigma(k)$, a subclass of $\Sigma$ such that all elements have $k$-quasiconformal extensions to $\CC$, were proposed, and many mathematicians have worked on this problem.
We note that in spite of such a circumstance, there are many open problems in this field including the coefficient problem.

Our argument is built on the following fact.

 \begin{thm}
$\S(k),\,\S^*(k)$ and $\Sigma(k)$ are compact families.
\end{thm}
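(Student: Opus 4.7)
The plan is to verify sequential compactness in the topology of locally uniform convergence by separating normality from closedness under limits.

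Normality will be immediate: $\S(k),\S^*(k)\subset\S$, which is compact by the Koebe distortion theorem together with Hurwitz's theorem (discussion following Theorem \ref{kdistortion}); and $\Sigma(k)\subset\Sigma$ is normal on $\D^*$ via the Gronwall area theorem, which produces a uniform bound on $|g(\zeta)-\zeta|$ over compact subsets of $\D^*$. Hence any sequence in the family admits a subsequence converging locally uniformly to some univalent limit $f$ (resp.\ $g$).

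For closedness the task is to equip the limit with a $k$-quasiconformal extension. Treating $\S(k)$ first, I would choose $k$-quasiconformal extensions $F_n:\C\to\C$ of $f_n$ and extend them to $\CC$ by $F_n(\infty)=\infty$; then $F_n$ is a $k$-quasiconformal self-map of $\CC$ fixing $0$ and $\infty$, and by Theorems \ref{1/4-theorem} and \ref{kdistortion}, $F_n(1/2)=f_n(1/2)$ stays in a fixed compact subset of $\C\setminus\{0\}$ independent of $n$. This three-point normalization places $\{F_n\}$ in the scope of the standard compactness theorem for quasiconformal mappings (see e.g.\ \cite[II.5]{LehtoVirtanen:1973}): some subsequence converges locally uniformly on $\CC$ to a $k$-quasiconformal self-map $F$ with $F|_\D=f$, so $f\in\S(k)$. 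The case $\Sigma(k)$ proceeds identically, with the asymptotic $g_n(\zeta)=\zeta+O(1/\zeta)$ at $\infty$ combined with the area theorem supplying the analogous three-point normalization. For $\S^*(k)$, where the extensions need not fix $\infty$, my plan would be instead to pass to the Beltrami coefficients $\mu_n\in L^\infty(\CC)$ (vanishing on $\D$ with $\|\mu_n\|_\infty\le k$), extract a weak-$*$ subsequential limit $\mu$ with $\|\mu\|_\infty\le k$ and $\mu|_\D=0$, and solve the Beltrami equation for $\mu$ via Theorem \ref{meas_RMT} to realize the required extension of $f$.

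The main obstacle will be the invocation of the QC compactness theorem (or, for $\S^*(k)$, the continuous dependence of the Beltrami solution on its coefficient). Two points must be controlled: the limit map must remain a homeomorphism rather than collapse to a constant, and the dilatation bound $k$ must be inherited in the limit rather than upgraded to some $k'>k$. The former is secured by the three-point normalization extracted from the Koebe distortion and $1/4$-theorems; the latter is the classical lower semicontinuity of the maximal dilatation under locally uniform convergence, due to Ahlfors. With these in place, $F|_\D=f$ (resp.\ $F|_{\D^*}=g$) is forced by $F_n|_\D=f_n\to f$, and the family is closed.
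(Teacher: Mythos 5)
The paper states this compactness result without proof (it is quoted as a known fact), so there is no in-text argument to measure yours against; judged on its own terms, your treatment of $\S(k)$ and $\Sigma(k)$ is the standard one and is sound. Normality comes for free from $\S$ (resp.\ $\Sigma$), the three-point normalization extracted from Theorems \ref{1/4-theorem} and \ref{kdistortion} (resp.\ from the area theorem and the behaviour at $\infty$) places the extensions in the scope of the compactness theorem for $k$-quasiconformal self-maps of $\CC$, and lower semicontinuity of the maximal dilatation forces the limit to be a non-degenerate $k$-quasiconformal map restricting to $f$ on $\D$ (resp.\ to $g$ on $\D^*$).

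The $\S^*(k)$ branch, however, contains a genuine gap. Weak-$*$ convergence $\mu_n\rightharpoonup\mu$ in $L^\infty(\CC)$ with $\|\mu_n\|_\infty\le k$ does \emph{not} imply that the normalized solutions $f^{\mu_n}$ of the Beltrami equation converge to $f^{\mu}$: the map $\mu\mapsto f^{\mu}$ of Theorem \ref{meas_RMT} is continuous for convergence almost everywhere (or in measure) under a uniform bound on $\|\mu_n\|_\infty$, but not for the weak-$*$ topology. Rapidly oscillating coefficients with $\mu_n\rightharpoonup 0$ can produce solutions converging to a map that is not conformal --- this is exactly the homogenization ($G$-convergence) phenomenon, see \cite{AstaraIwa:2009} --- so nothing in your argument forces $f^{\mu}|_{\D}$ to equal $f$. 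The repair is immediate and makes the detour through Beltrami coefficients unnecessary: run for $\S^*(k)$ precisely the normal-families argument you used for $\S(k)$. The extensions $F_n:\CC\to\CC$ satisfy $F_n(0)=0$; $F_n(\infty)\notin f_n(\D)$, so $|F_n(\infty)|\ge 1/4$ by Theorem \ref{1/4-theorem}; and $f_n(1/2)$ lies in a fixed annulus around the origin at a definite Euclidean distance from $\de f_n(\D)$ by Theorem \ref{kdistortion}. Hence the images of the three points $0$, $1/2$, $\infty$ are uniformly separated in the spherical metric, the compactness theorem of \cite{LehtoVirtanen:1973} applies, and the limit cannot degenerate because its restriction to $\D$ is the univalent function $f$.
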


K\"uhnau gave a fundamental contribution to the coefficient problem with the variational method.

 \begin{thm}[{\cite{Kuhnau:1969}}]
Let $f(z) = z + \sum_{n=2}^{\infty} a_nz^n \in \S(k)$ and $g(\zeta) = \zeta + \sum_{n=0}^{\infty}b_n z^{-n} \in \Sigma(k)$.
Then the followings hold; $|b_0| \le 2k$, $|b_1| \le k$ and $|a_3 - a_2^2| \le k$, in particular $|a_2| \le 2k$.
\end{thm}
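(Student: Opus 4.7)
The core is a quasiconformal strengthening of Gronwall's classical area theorem: \emph{if $g\in\Sigma(k)$, then $\sum_{n=1}^{\infty} n|b_{n}|^{2}\le k^{2}$}. Granting this, $|b_{1}|\le k$ comes from the $n=1$ term. To control $|b_{0}|$, I apply the area theorem to the odd square-root transform $\tilde g(\zeta):=\sqrt{g(\zeta^{2})}$, which is single-valued on $\D^{*}$ because $g$ omits $0$; its Laurent expansion is $\tilde g(\zeta)=\zeta+(b_{0}/2)\zeta^{-1}+\cdots$, so $|\tilde b_{1}|=|b_{0}|/2\le k$. The two inequalities for $f\in\S(k)$ then follow by applying the bounds on $(b_{0},b_{1})$ to the associate $g(\zeta):=1/f(1/\zeta)$, whose expansion is $\zeta-a_{2}+(a_{2}^{2}-a_{3})\zeta^{-1}+\cdots$; the fact that $g\in\Sigma(k)$ is verified by the chain-rule identity $\mu_{G}(\zeta)=(\zeta/\bar\zeta)^{2}\mu_{F}(1/\zeta)$ for the natural extension $G(\zeta):=1/F(1/\zeta)$.

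\textbf{Proof of the strengthened area theorem.} Let $G:\CC\to\CC$ be a $k$-qc extension of $g$ and set $E:=G(\D)=\C\setminus g(\D^{*})$. Gronwall's theorem applied to $g|_{\D^{*}}$ gives $\mathrm{Area}(E)=\pi(1-\sum_{n\ge 1} n|b_{n}|^{2})$, while the qc change-of-variable on $\D$ together with $|\mu_{G}|\le k$ gives
\[
\mathrm{Area}(E)=\int_{\D}|G_{z}|^{2}(1-|\mu_{G}|^{2})\,dA\ge(1-k^{2})\int_{\D}|G_{z}|^{2}\,dA.
\]
A distributional Stokes' theorem, using the Laurent expansion $g(e^{i\theta})=e^{i\theta}+b_{0}+b_{1}e^{-i\theta}+\cdots$ on $\partial\D$, yields $\int_{\D}G_{z}\,dA=\frac{i}{2}\oint_{\partial\D}G\,d\bar\zeta=\pi$, and the Cauchy--Schwarz inequality then forces $\int_{\D}|G_{z}|^{2}\,dA\ge\pi$. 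Chaining the estimates gives $\pi(1-\sum n|b_{n}|^{2})\ge(1-k^{2})\pi$, which is exactly the area theorem.

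\textbf{Main obstacle.} The one delicate point is verifying that $\tilde g=\sqrt{g(\zeta^{2})}$ lies in $\Sigma(k)$ rather than merely in $\Sigma$. I would construct a $k$-qc extension explicitly by exploiting the correspondence with $\S(k)$: the extension $G(\zeta):=1/F(1/\zeta)$ satisfies $G(0)=0$ (since $F(\infty)=\infty$), so $G^{-1}(0)=\{0\}$ and $\tilde G(\zeta):=\sqrt{G(\zeta^{2})}$ extends continuously and single-valuedly to all of $\CC$. Differentiating the relation $\tilde G(\zeta)^{2}=G(\zeta^{2})$ gives $\mu_{\tilde G}(\zeta)=(\bar\zeta/\zeta)\mu_{G}(\zeta^{2})$, so $\|\mu_{\tilde G}\|_{\infty}\le k$; the injectivity of $\tilde G$ follows from that of $G$ once the odd-symmetric branch is fixed. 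The subsidiary issue of applying Stokes' theorem to the only $W^{1,2}_{\mathrm{loc}}$-regular extension $G$ is handled by the standard smooth approximation argument for quasiconformal maps.
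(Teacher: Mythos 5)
Your proposal is correct in substance, but note that the paper itself offers no proof of this statement: it is quoted from K\"uhnau \cite{Kuhnau:1969}, and the surrounding text attributes the result to the variational method. Your route is the other classical one, through the quasiconformal strengthening of Gronwall's area theorem. The chain
\[
m(E)=\pi\Bigl(1-\sum_{n\ge1}n|b_n|^2\Bigr)=\int_{\D}|G_z|^2\bigl(1-|\mu_G|^2\bigr)\,dA\;\ge\;(1-k^2)\int_{\D}|G_z|^2\,dA\;\ge\;(1-k^2)\pi,
\]
with the lower bound $\int_{\D}|G_z|^2\,dA\ge\pi$ coming from $\int_{\D}G_z\,dA=\tfrac{i}{2}\oint_{\de\D}G\,d\bar\zeta=\pi$ and Cauchy--Schwarz, is the standard derivation of the area theorem for $\Sigma(k)$ (cf.\ \cite{Lehto:1987}), and your reductions are all sound: $|b_1|\le k$ from the $n=1$ term; $|b_0|\le 2k$ from the odd square-root transform with $\mu_{\tilde G}(\zeta)=(\bar\zeta/\zeta)\mu_G(\zeta^2)$; and the passage to $f\in\S(k)$ via $g=1/f(1/\zeta)$ with $\mu_G(\zeta)=(\zeta/\bar\zeta)^2\mu_F(1/\zeta)$, which converts $(b_0,b_1)$ into $(-a_2,\,a_2^2-a_3)$. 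Compared with the variational method, this argument is more elementary and delivers the stronger quadratic inequality $\sum_{n\ge1}n|b_n|^2\le k^2$ for free, though it does not yield the sharpness analysis or the higher-coefficient information that the variational machinery provides. The regularity points you flag (Stokes for a $W^{1,2}_{\mathrm{loc}}$ extension via exhaustion by $\D_\rho$, and the fact that branch extraction preserves quasiconformality) are genuinely routine; the latter is the same device the paper itself invokes for the extremal functions $f_n=(f_2(z^{n-1}))^{1/(n-1)}$.

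One caveat you should make explicit. The inequality $|b_0|\le 2k$ is false for $\Sigma(k)$ as literally defined in the survey, where no normalization is imposed on the extension: $g(\zeta)=\zeta+c$ belongs to $\Sigma(0)$ with $|b_0|=|c|$ arbitrary. K\"uhnau's theorem requires the $k$-quasiconformal extension $G$ to fix the origin, and your argument uses exactly this hypothesis --- single-valuedness of $\sqrt{G(\zeta^2)}$ on all of $\CC$ forces the unique zero of the injective map $G$ to lie at $0$. Since your construction $G=1/F(1/\zeta)$ automatically satisfies $G(0)=1/F(\infty)=0$, the conclusions for $f\in\S(k)$ (the bounds on $|a_2|$ and $|a_3-a_2^2|$) are unaffected; just state the normalization $G(0)=0$ when asserting $|b_0|\le 2k$ for a general $g\in\Sigma(k)$.
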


\no
We note that in the case when $k=1$ we obtain estimates for the classes $\S$ and $\Sigma$.

As more general approach to this problem, the distortion theorem for bounded functional was studied.
We basically follow the description of the survey paper by Krushkal \cite[Chapter 3]{Krushkal:2005a}. 
The reader is also referred to \cite{KuKru:1983}.

Let $E \subset \CC$ be a measurable set whose complement $E^* := \CC \,\backslash\,E$ has positive measure, and set
$$
B^*(E) := \{ \mu  \in B(\CC) : \mu|_{E^*} =0\}.
$$
Denote by $Q(E)$ the family of normalized quasiconformal mappings $f_{\mu} : \CC \to \CC$ where $\mu \in B\*(E)$, and $Q_k(E) := \{f \in Q(E) : ||\mu_f|| \le k\}$ for a $k \in [0,1)$.
Now let $F : Q(E) \to \C$ be a non-trivial holomorphic functional, where holomorphic means that it is complex Gateaux differentiable.
Lastly, set $||F||_1 := \sup_{f \in Q(E)}|F(f)|$ and $||F||_k := \max_{f \in Q_k(E)}|F(f)|$.

 \begin{thm}
Let $F : (Q(E)) \to \C$ be bounded. Then we have
$
||F||_k \leq k||F||_1.
$
\end{thm}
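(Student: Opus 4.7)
The plan is to apply the Schwarz lemma to a one-parameter holomorphic family of normalized quasiconformal mappings obtained by rescaling a Beltrami coefficient. The crucial analytic input is the Ahlfors--Bers refinement of Theorem \ref{meas_RMT}, which states that $\mu \mapsto f^{\mu}$ depends holomorphically on $\mu \in B(\CC)$ in an appropriate sense; composing with the Gateaux-holomorphic $F$ then yields honest holomorphic functions on one-dimensional disks sitting inside $B^{*}(E)$.

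More precisely, I would fix $\mu \in B^{*}(E)$ with $0 < ||\mu||_{\infty} \leq k$, set $\nu := \mu/||\mu||_{\infty}$ (so $\nu$ vanishes on $E^{*}$ and has $L^{\infty}$-norm $1$), and define
\[
\psi(t) := F(f^{t\nu}), \qquad t \in \D.
\]
For each $t \in \D$ the rescaled coefficient $t\nu$ has $L^{\infty}$-norm $|t| < 1$ and is still supported in $E$, so $t\nu \in B^{*}(E)$ and $f^{t\nu} \in Q(E)$. Hence $\psi$ is a well-defined function on $\D$, and it is holomorphic because it is the composition of the holomorphic family $t \mapsto f^{t\nu}$ with the Gateaux-holomorphic functional $F$. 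The boundedness of $F(Q(E))$ gives $|\psi(t)| \leq ||F||_{1}$ on $\D$, and the normalization $F(\textup{id}) = 0$ (implicit in the setup, as it is forced by the case $k = 0$ of the inequality) gives $\psi(0) = 0$. The classical Schwarz lemma then yields $|\psi(t)| \leq |t| \cdot ||F||_{1}$; evaluating at $t = ||\mu||_{\infty}$, and noting that $||\mu||_{\infty} \, \nu = \mu$, one obtains
\[
|F(f^{\mu})| \leq ||\mu||_{\infty} \cdot ||F||_{1} \leq k \, ||F||_{1}.
\]
Taking the supremum over admissible $\mu$ yields $||F||_{k} \leq k \, ||F||_{1}$.

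The principal obstacle is ensuring that $t \mapsto f^{t\nu}$ is genuinely holomorphic as a family of normalized quasiconformal mappings, so that $\psi$ is holomorphic in $t$; this is precisely the content of the Ahlfors--Bers dependence theorem and is the only non-elementary ingredient. Once it is combined with the Gateaux-holomorphy hypothesis on $F$, the proof collapses to a single application of the Schwarz lemma, and the factor of $k$ on the right-hand side comes directly from the $|t|$ in that lemma.
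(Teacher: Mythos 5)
The paper states this theorem without proof, simply citing Krushkal's survey, and your argument is precisely the standard proof given in that source: rescale the Beltrami coefficient, use the Ahlfors--Bers holomorphic dependence of $\mu \mapsto f^{\mu}$ to make $t \mapsto F(f^{t\nu})$ a bounded holomorphic function on $\D$, and apply the Schwarz lemma. The only delicate point is the normalization $F(\mathrm{id})=0$ needed for $\psi(0)=0$, which is indeed implicit in Krushkal's setup and which you correctly identify as forced by the $k=0$ case of the asserted inequality; so the proposal is correct and matches the cited approach.
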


\no
Some applications of the theorem are demonstrated in \cite[Chapter 3.4]{Krushkal:2005a}.
One of them is the distortion theorem for the class $S(k)$ (see also \cite[Corollary 7]{Gutl:1973});
$$
\dstyle \left(\frac{1-|z|}{1+|z|}\right)^k \le \left|\frac{zf'(z)}{f(z)}\right|\le  \left(\frac{1+|z|}{1-|z|}\right)^k.
$$
For more results and proofs, see \cite{Schober:1975}, \cite{Krushkal:2005a}, \cite{Krushkal:2005b}.

The estimate of $|a_2|$ for the class $\S^*(k)$ is obtained by Schiffer and Schober.

 \begin{thm}[{\cite{SchifferSchober:1976}}]
For all $f \in \S^*(k)$, we have the sharp estimate
$$
|a_2| \le 2-4\left(\frac{\arccos k}{\pi}\right)^{2}.
$$
For the sharp function, see \cite[Eq. (4.2)]{SchifferSchober:1976}
\end{thm}

\no
Since the class $\S^*(k)$ is closed with respect to the Koebe transform \eqref{Koebe_trans}, we have the fundamental estimate for $\S^*(k)$
$$
\left|\frac{zf''(z)}{f'(z)} -\frac{2|z|^2}{1-|z|^2}\right| \le \left(2-4\left(\frac{\arccos k}{\pi}\right)^{2}\right)\frac{2|z|}{1-|z|^2}.
$$
Following the standard argument for the class $\S$ (see Section 2.1, or \cite[pp.21-22]{Pom:1975}), we have distortions of $f$ and $f'$ for $\S^*(k)$.
We note that the same method as this is not valid for the class $\S(k)$ because the Koebe transform \eqref{Koebe_trans} does not fix $\infty$ except the case $\zeta =0$.

As is written before, while the coefficient problem has been completely solved in the class $\S$, the question remains open for the class $\S(k)$.
However, if we restrict ourselves to that $k$ is sufficiently small, then the sharp result is established by Krushkal.
 \begin{thm}[{\cite{Krushkal:1988, Krushkal:1995}}]
For a function $f(z) = z + a_2z^2 + \cdots \in \S(k)$, we have the sharp estimate
\begin{equation}
\label{coeff}
|a_n| \le \frac{2k}{n-1}
\end{equation}
for $k \le 1/(n^2+1)$.
\end{thm}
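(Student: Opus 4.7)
The plan is to view $a_n$ as a holomorphic functional on a ball of Beltrami coefficients and to extract the constant $2/(n-1)$ from its Fr\'echet derivative at the origin, where the linearization can be computed exactly and the nonlinear remainder controlled for small $k$.

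First I would parametrize $\S(k)$ by Beltrami coefficients: each $\mu \in L^{\infty}(\D^{*})$ with $\|\mu\|_{\infty}\le k$, extended by zero to $\D$, produces by Theorem \ref{meas_RMT} a normalized quasiconformal $f^{\mu}$ of $\C$ whose restriction to $\D$ lies in $\S(k)$. Setting $F(\mu):=a_{n}(f^{\mu})$ yields a holomorphic functional on the open unit ball $B(\D^{*})$ with $F(0)=0$, and de Branges' theorem applied to $\S(1)\subset\S$ gives the uniform bound $|F|\le n$.

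Next I would compute $F'(0)$ using the Ahlfors--Bers variational formula. For $\mu$ supported in $\D^{*}$, the first variation $\dot{f}=\frac{d}{dt}f^{t\mu}|_{t=0}$ is a Cauchy-type transform of $\mu$ whose $n$-th Taylor coefficient at $z=0$ is
\begin{equation*}
F'(0)\mu \;=\; -\frac{1}{\pi}\int_{\D^{*}}\frac{\mu(w)}{w^{n+1}}\,dA(w).
\end{equation*}
Its $L^{\infty}$-operator norm equals
\begin{equation*}
\frac{1}{\pi}\int_{\D^{*}}\frac{dA(w)}{|w|^{n+1}} \;=\; \frac{2}{n-1},
\end{equation*}
attained by $\mu^{*}(w)=k(w/|w|)^{n+1}$, so that $|F'(0)\mu|\le 2k/(n-1)$ whenever $\|\mu\|_{\infty}\le k$.

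To pass from the linear bound to the full functional, I would write $F(\mu)=F'(0)\mu+R(\mu)$ and control the remainder $R$ by applying the scalar Schwarz lemma to the slice $t\mapsto F(t\mu/\|\mu\|_{\infty})$ on $\D$, which is bounded by $n$. This produces $|R(\mu)|\le Cn\|\mu\|_{\infty}^{2}$ on $\{\|\mu\|_{\infty}\le k\}$, giving $|a_{n}|\le \frac{2k}{n-1}+Cnk^{2}$; the precise threshold $k\le 1/(n^{2}+1)$ is exactly what is needed so that this quadratic contribution is absorbed by the linear bound at the extremal direction. Sharpness is witnessed by the explicit solution $f^{\mu^{*}}$ of the Beltrami equation with coefficient $\mu^{*}$: the rotational symmetry $\mu^{*}(e^{i\theta}w)=e^{i(n+1)\theta}\mu^{*}(w)$ forces all Taylor coefficients at $z=0$ to vanish except $a_{1}=1$ and $a_{n}$, and a direct computation yields $a_{n}=2k/(n-1)$.

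The main obstacle will be step (3): a naive Schwarz-type bound gives only that the remainder is $O(nk^{2})$, whereas the sharp constraint $k\le 1/(n^{2}+1)$ appears to require a finer comparison of $F(\mu)$ with its linear part along the ray to $\mu^{*}$, exploiting the algebraic structure of the Ahlfors--Bers series at the symmetric extremal rather than a generic estimate.
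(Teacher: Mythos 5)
The paper itself gives no proof of this theorem: it is quoted from Krushkal's papers, and the only proof-adjacent content in the survey is the exhibition of the extremal functions $f_n(z)=(f_2(z^{n-1}))^{1/(n-1)}$ with $f_2(z)=z/(1-kz)^2$, plus the remark that $f_n\in\S(k)$ follows from the quasiconformal extension criterion for starlike functions. So your proposal has to stand on its own. Its opening is sound and is indeed the standard entry point: parametrizing $\S(k)$ by Beltrami coefficients supported in $\D^*$, computing the first variation $\dot a_n=-\pi^{-1}\int_{\D^*}\mu(w)\,w^{-n-1}\,dA(w)$, evaluating its norm as $2/(n-1)$, and identifying the extremal direction $\mu^*(w)=k(w/|w|)^{n+1}$. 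This correctly produces both the constant and the candidate extremal.

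The genuine gap is the passage from the linear bound to the full functional, and it cannot be closed the way you describe. An estimate $|a_n|\le 2k/(n-1)+Cnk^2$ with $C>0$ is strictly weaker than the asserted sharp bound for \emph{every} $k>0$; there is no threshold below which a genuinely positive quadratic error is ``absorbed'' into a linear bound that is attained with equality at $\mu^*$. What is actually needed is to show that for $k$ small the maximum of the nonlinear functional over the ball is achieved exactly at the extremal of its linearization, i.e.\ that the higher-order terms do not increase the modulus in the extremal direction. This is where Krushkal's work lives: it rests on the general principle $\|F\|_k\le k\|F\|_1$ for bounded holomorphic functionals (stated separately in Section 2.3 of this survey), on the Schwarz lemma in the invariant metrics of the Bers embedding, and on Grunsky-coefficient inequalities that identify when the Teichm\"uller-extremal coefficient for the linear part remains extremal for the full problem; the threshold $1/(n^2+1)$ emerges from those estimates, not from a generic remainder bound. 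Two smaller issues: the rotational symmetry of $\mu^*$ only forces $a_m=0$ for $m\not\equiv 1\pmod{n-1}$, so coefficients $a_{2n-1},a_{3n-2},\dots$ may survive (harmless for sharpness, but your claim is too strong as stated); and the exact identity $a_n(f^{\mu^*})=2k/(n-1)$, rather than its first-order version, requires verification, e.g.\ by matching $f^{\mu^*}$ with the explicit extremal $f_n$ above.
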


\no
The extremal function of the estimate \eqref{coeff} is given by
\begin{eqnarray*}
&&f_2(z) := \frac{z}{(1-kz)^2} \hspace{15pt} (k \in [0,1)),\\
&&f_n(z) := (f_2(z^{n-1}))^{1/(n-1)} = z + \frac{2k}{n-1}z^n + \cdots\hspace{15pt}n =3,4,\cdots.
\end{eqnarray*}

\no
To see $f_n \in \S(k)$, calculate $z f_n'(z)/f_n(z)$ and apply the quasiconformal extension criterion for starlike functions in Section 3.4.

	%
			\subsection{Sufficient conditions for $\S(k)$}
	%

Since Bers introduced a new model of the universal Teichm\"uller space, numerous sufficient conditions for the class $\S(k)$ have been obtained.
In this subsection we introduce only a few remarkable results.

In 1962, the first sufficient condition for $\S(k)$ was provided by Ahlfors and Weill.

 \begin{thm}[{\cite{AhlforsWeill:1962}}]
Let $f$ be a non-constant meromorphic function defined on $\D$ and $k \in [0,1)$ be a constant.
If $f$ satisfies $||S_f|| \le 2k$, then $f$ can be extended to a quasiconformal mapping $F$ to $\CC$.
In this case the dilatation $\mu_F$ is given by
$$
\mu_F(z) := \left\{
\begin{array}{lll}
\dstyle -\frac12 (|z|^2 -1)^2 S_F\left(\frac{1}{\bar{z}}\right)\frac{1}{\bar{z}^4},& |z|>1\\
0,&|z| < 1.
\end{array}
\right.
$$
\end{thm}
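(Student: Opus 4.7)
The plan is to construct the quasiconformal extension $F$ by an explicit reflection formula, then verify its properties directly. For $|z|>1$, set $\zeta := 1/\bar z \in \D$ (the reflection of $z$ across $\partial \D$), and define
\begin{equation*}
F(z) := f(\zeta) + \frac{(z-\zeta)\,f'(\zeta)}{1 - \tfrac{1}{2}(z-\zeta)\,f''(\zeta)/f'(\zeta)},
\qquad |z|>1,
\end{equation*}
and $F(z) := f(z)$ for $z \in \D$. Geometrically this assigns to each $z$ outside $\D$ the value at $z$ of the osculating M\"obius transformation of $f$ at the hyperbolic reflection $\zeta$. On $|z|=1$ one has $\zeta = z$, so the numerator vanishes and $F(z) = f(z)$; thus $F$ is continuous across $\partial \D$.

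Next I would compute the partial derivatives of $F$ in $z$ and $\bar z$ on $\{|z|>1\}$. Since $\zeta = 1/\bar z$, only $\zeta_{\bar z} = -1/\bar z^{2}$ is nonzero, so $\partial_z$ differentiates only the explicit $z$-occurrence and $\partial_{\bar z}$ differentiates only through $\zeta$. A careful but mechanical chain-rule calculation — using the identity $S_f = (f''/f')' - \tfrac{1}{2}(f''/f')^{2}$ to collapse the resulting expression — yields
\begin{equation*}
F_z(z) = \frac{f'(\zeta)}{\bigl(1-\tfrac{1}{2}(z-\zeta)f''(\zeta)/f'(\zeta)\bigr)^{2}},
\qquad
F_{\bar z}(z) = -\,\frac{(|z|^{2}-1)^{2}}{2\,\bar z^{4}}\cdot \frac{S_f(\zeta)}{\bigl(1-\tfrac{1}{2}(z-\zeta)f''(\zeta)/f'(\zeta)\bigr)^{2}}.
\end{equation*}
Dividing gives exactly the formula stated for $\mu_F(z)$, since $S_F \equiv S_f$ on $\D$.

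Substituting $w=\zeta=1/\bar z$ one finds $|\mu_F(z)| = \tfrac{1}{2}(1-|w|^{2})^{2}\,|S_f(w)|$, so the hypothesis $\|S_f\| \le k$ (in the appropriate Poincar\'e normalization) translates directly into $|\mu_F| \le k < 1$ a.e.\ on $\C\setminus\closure{\D}$. In particular $F_z \neq 0$ there, and by construction $F$ is smooth on each side of $\partial\D$, continuous across it, and equals $f$ on $\D$. Hence $F$ is a locally injective, sense-preserving map with bounded distributional Beltrami coefficient.

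The main obstacle is twofold. First, the derivative computation must be carried out carefully enough to exhibit the Schwarzian on the nose; the simplification is not obvious without grouping the terms so that $(f''/f')' - \tfrac{1}{2}(f''/f')^{2}$ emerges. Second, one must pass from local injectivity and the Beltrami bound to a genuine \emph{global} quasiconformal homeomorphism of $\CC$. For this I would invoke Theorem \ref{meas_RMT}: let $g$ be the (unique normalized) quasiconformal solution of the Beltrami equation with coefficient $\mu_F$ (extended by $0$ inside $\D$); then $F\circ g^{-1}$ is $1$-quasiconformal, hence conformal, and the normalization together with continuity across $\partial\D$ forces $F$ itself to be quasiconformal. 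This packages $F$ as the required $k$-quasiconformal extension of $f$.
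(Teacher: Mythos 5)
The paper states this result as a cited classical theorem of Ahlfors--Weill and gives no proof, so there is nothing in the text to compare against; your construction via the osculating M\"obius transformation at the reflected point $\zeta=1/\bar z$ is the standard one from \cite{AhlforsWeill:1962}. Your computation of the Beltrami coefficient is essentially right: carrying out the $\partial_\zeta$-differentiation and using $f'\cdot\tfrac12(f''/f')'+\tfrac14 f'(f''/f')^2-\tfrac12 f'(f''/f')^2=\tfrac12 f'S_f$ does collapse the numerator to $\tfrac12(z-\zeta)^2 f'(\zeta)S_f(\zeta)$, and with $(z-\zeta)^2=(|z|^2-1)^2/\bar z^2$ one lands exactly on the stated $\mu_F$. (Minor slip: your displayed $F_{\bar z}$ is missing the factor $f'(\zeta)$ in the numerator; without it the quotient $F_{\bar z}/F_z$ would not give the claimed formula.)

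There is, however, a genuine gap at the globalization step, and it sits precisely where the real content of the theorem lies. Your argument ``let $g$ solve the Beltrami equation with coefficient $\mu_F$, then $F\circ g^{-1}$ is $1$-quasiconformal, hence conformal, hence $F$ is quasiconformal'' is circular: to apply the composition formula \eqref{Belt_formula} and to speak of $F\circ g^{-1}$ as a ($1$-quasiconformal) homeomorphism, you must already know that $F$ is a homeomorphism. What you can extract without that (a Stoilow-type factorization) is only $F=h\circ g$ with $h$ holomorphic on $g(\C\setminus\closure{\D})$, and a holomorphic $h$ need not be injective, so injectivity of $F$ does not follow. Establishing that $F$ is a global homeomorphism of $\CC$ is exactly where the hypothesis $\|S_f\|\le k<1$ is used in an essential way: one first gets univalence of $f$ from Nehari's criterion, writes $f=\eta_1/\eta_2$ with $\eta''+\tfrac12 S_f\,\eta=0$ to control the boundary behaviour, and then either argues directly or uses that a local homeomorphism of the sphere onto itself is a covering map, hence a homeomorphism. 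Related to this, your claim that $F$ is continuous across $\partial\D$ ``since the numerator vanishes'' is not automatic: as $z\to e^{i\theta}$ from outside, $(z-\zeta)\to 0$ but $f'(\zeta)$ and $f''(\zeta)/f'(\zeta)$ may blow up, so the continuous extension of $F$ (and of $f$ itself) to $\partial\D$ must be proved, not assumed; the same care is needed at the pole of the meromorphic $f$ and at $\infty$.
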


1972, Becker gave a sufficient condition in connection with the pre-Schwarzian derivative.
Later it was generalized by Ahlfors.
 \begin{thm}[{\cite{Ahlfors:1974}}]
\label{Ahlforscriterion}
Let $f \in \A$.
If there exists a $k \in [0,1)$ such that for a constant $c \in \C$ the inequality
\begin{equation}
\label{ahlfors}
\left|c|z|^2 + (1-|z|^2) \frac{f''(z)}{f'(z)}\right| \leq k
\end{equation}
holds for all $z \in \D$, then $f \in \S(k)$ .
\end{thm}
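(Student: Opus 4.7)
The plan is to construct an explicit $k$-quasiconformal extension of $f$ by reflecting across $\de\D$. On $\closure{\D}$ set $F\equiv f$, and on $\D^{*}:=\CC\setminus\closure{\D}$ use an ansatz of the form
\begin{equation*}
F(z) = f\!\left(\frac{1}{\bar{z}}\right) + \rho(z,\bar{z})\,f'\!\left(\frac{1}{\bar{z}}\right),
\end{equation*}
where $\rho(z,\bar{z})$ is a specific rational expression in $z$ and $\bar{z}$, involving the parameter $c$, required to vanish as $|z|\to 1^{+}$. The vanishing ensures that $F$ is continuous across $\de\D$, since $1/\bar{z}=z$ on the unit circle, so the boundary values from inside and outside automatically agree.

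The heart of the argument is a direct Wirtinger--derivative calculation on $\D^{*}$. Setting $\zeta:=1/\bar{z}\in\D$ and using $\de_{\bar{z}}\zeta=-\zeta^{2}$, I would apply the chain rule to compute $F_{z}$ and $F_{\bar{z}}$ in terms of $f'(\zeta)$, $f''(\zeta)$, and the partial derivatives of $\rho$. The correct choice of $\rho$ should make $F_{z}$ reduce to a nonvanishing scalar times $f'(\zeta)$, while $F_{\bar{z}}$ rearranges into
\begin{equation*}
F_{\bar{z}}(z) = B(z,\bar{z})\left[c|\zeta|^{2}+(1-|\zeta|^{2})\frac{f''(\zeta)}{f'(\zeta)}\right]f'(\zeta),
\end{equation*}
with the scalar in $F_{z}$ and the factor $B$ having equal moduli. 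Dividing then exhibits the Beltrami coefficient $\mu_{F}=F_{\bar{z}}/F_{z}$ as a unimodular multiple of the very quantity controlled by the hypothesis, and the bound $\|\mu_{F}\|_{\infty}\le k$ drops out immediately from \eqref{ahlfors}.

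Finally, $F$ must be upgraded from a smooth map with controlled dilatation to a genuine homeomorphism of $\C$. The Jacobian $|F_{z}|^{2}-|F_{\bar{z}}|^{2}$ is pointwise positive since $|\mu_{F}|\le k<1$, so $F$ is a local homeomorphism, and properness as $|z|\to\infty$ forces it to be a global one; alternatively, Theorem~\ref{meas_RMT} applied to $\mu_{F}$ produces a unique normalized quasiconformal self-map $g$ of $\C$ solving the same Beltrami equation, and $F\circ g^{-1}$ is conformal hence M\"obius, pinning $F$ down as a homeomorphism via the normalization. The main obstacle is discovering the correct form of $\rho(z,\bar{z})$: the algebra in the middle step is delicate, and the stray terms produced by $\de_{\bar{z}}[f(\zeta)]$ and $\de_{\bar{z}}[\rho\, f'(\zeta)]$ that are not proportional to $c|\zeta|^{2}+(1-|\zeta|^{2})f''(\zeta)/f'(\zeta)$ must cancel exactly against each other. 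An appealing alternative, better aligned with the theme of this survey, is to manufacture a Loewner chain $\{L(\cdot,t)\}_{t\ge 0}$ with $L(\cdot,0)=f$ whose associated Herglotz transition function encodes the Ahlfors quantity, and to read off the extension as $F(z):=L(z/|z|,\log|z|)$ for $|z|>1$; this reroutes the same obstacle into the choice of ansatz for the chain.
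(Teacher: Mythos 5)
Your proposal stops exactly where the proof has to start: in both of the routes you sketch, the entire content of the theorem lives in the explicit ansatz --- the reflection term $\rho(z,\bar z)$ in the first route, the chain in the second --- and you produce neither, conceding yourself that ``discovering the correct form of $\rho$'' is the main obstacle. The paper's argument is your second route made concrete: it applies Becker's theorem (Theorem \ref{Beckerthm}) to the chain
$$
f_t(z) := f(e^{-t}z) + \frac{1}{1+c}\,(e^{t}-e^{-t})\,f'(e^{-t}z),
$$
for which a direct computation identifies
$$
\frac{1-p(z,t)}{1+p(z,t)} \;=\; \frac{zf_t'(z)-\dot f_t(z)}{zf_t'(z)+\dot f_t(z)}
$$
with $c\,e^{-2t}$ plus $\bigl(1-e^{-2t}\bigr)$ times the pre-Schwarzian term evaluated at $e^{-t}z$; hypothesis \eqref{ahlfors} then places $p(z,t)$ in the disk $U(k)$ of \eqref{U(k)}, and Theorem \ref{Beckerthm} delivers the $k$-quasiconformal extension \eqref{beckerequation} together with the homeomorphism property. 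Without this chain (or the equivalent closed form of $\rho$) nothing in your outline can be verified, so what you have is a plan rather than a proof.

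A secondary, but genuine, defect is your argument that the reflected map is a homeomorphism. Positivity of the Jacobian gives local injectivity only where $F$ is continuously differentiable, which fails on $\de\D$, so ``proper local homeomorphism, hence global'' does not apply as stated; and the variant through Theorem \ref{meas_RMT} is circular, because the composition formula \eqref{Belt_formula} and the conclusion that $F\circ g^{-1}$ is conformal already presuppose that $F$ is a quasiconformal \emph{homeomorphism}, which is what you are trying to prove. This is precisely the delicate point that the Loewner route disposes of for free: Becker's theorem constructs the extension as a bijective correspondence between the circles $\{|z|=e^{t}\}$ and the expanding boundaries $\de f_t(\D)$, so injectivity is built into the construction rather than checked afterwards.
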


\no
The case when $c=0$ is due to Becker \cite{Becker:1972}. Remark that the condition $|c| \le k$ which was stated in the original form is embedded in the inequality \eqref{ahlfors} (see \cite{Hotta:2010b}).

It is known that many univalence criteria are refined to quasiconformal extension criteria.
For instance, Fait, Krzy\.{z} and Zygmunt proved the following theorem which is the refinement of the definition of strongly starlike functions (for the definition, see Section 3.3).
 \begin{thm}[{\cite{FaitKrzyzZ:1976}}]
Every strongly starlike functions of order $\alpha$ has a $\sin(\pi\alpha/2)$-quasiconformal extension to $\C$.
\end{thm}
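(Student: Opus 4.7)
The plan is to construct an explicit $\sin(\pi\alpha/2)$-quasiconformal extension $F$ of $f$ to $\C$ built from the starlike geometry of $\Omega := f(\D)$ and to verify the dilatation bound by a direct computation of the Beltrami coefficient in polar coordinates. By definition, $f \in \A$ is strongly starlike of order $\alpha \in (0,1)$ means $|\arg(zf'(z)/f(z))| \le \pi\alpha/2$ on $\D$; in particular $\Re(zf'/f) > 0$, so $\Omega$ is starlike with respect to $0$ and $f$ maps $\closure{\D}$ homeomorphically onto $\closure{\Omega}$ (after the customary approximation $f_r(z) := f(rz)/r$, $r \nearrow 1$, one reduces to the case when $f$ is smooth up to $\partial \D$, and the uniform bound on $|\mu|$ is preserved in the weak-$*$ limit of the corresponding extensions).

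I would then set $F := f$ on $\closure{\D}$ and, for $z \in \D^* := \CC\,\backslash\,\closure{\D}$, define
\[
F(re^{i\theta}) \, := \, f(e^{i\theta})\, \rho(r,\theta), \qquad r > 1,
\]
where $\rho(1,\theta)\equiv 1$, $|\rho(r,\theta)| \to \infty$ as $r \to \infty$, and the complex $r$-derivative of $\log\rho$ is aligned with the direction of $p(e^{i\theta}) := e^{i\theta}f'(e^{i\theta})/f(e^{i\theta})$. Geometrically, the level curves of $r$ are carried by $F$ to curves in $\C\setminus\closure{\Omega}$ that spiral outward, tracking the tangent directions of $\partial\Omega$; by the strongly starlike hypothesis these directions deviate from the radial direction at $f(e^{i\theta})$ by at most the angle $\pi\alpha/2$. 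Using the polar Wirtinger identities $\de_z = \tfrac{e^{-i\theta}}{2}(\de_r - \tfrac{i}{r}\de_\theta)$ and $\de_{\bar z} = \tfrac{e^{i\theta}}{2}(\de_r + \tfrac{i}{r}\de_\theta)$, a routine calculation reduces $\mu_F = F_{\bar z}/F_z$ to a rational expression in $p(e^{i\theta})$ and the logarithmic derivatives of $\rho$.

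The main obstacle is to choose $\rho$ so that this rational expression is dominated in modulus by $\sin(\pi\alpha/2)$ uniformly in $(r,\theta)$. Since $|p(e^{i\theta})|$ is a priori unbounded on $\partial\D$, a pure real radial scaling $\rho = r^{c}$ fails: one checks that $|\mu_F| \to 1$ as $|p|\to\infty$. The fix is to let $\log\rho$ itself be complex, its direction locked to that of $p$, so that in the resulting M\"obius reduction the numerator and denominator differ only by a rotation through an angle of at most $\pi\alpha/2$; the elementary bound on the M\"obius distortion of such a sector rotation produces exactly the factor $\sin(\pi\alpha/2)$. Once this pointwise estimate is in place, the smoothness of $\rho$ off $\partial\D$ together with the continuity of $f$ up to the boundary give ACL and measurability of $\mu_F$, and the starlike structure of $\Omega$ guarantees that $F$ is a global homeomorphism of $\C$; this furnishes the $\sin(\pi\alpha/2)$-quasiconformal extension of $f$ and establishes the theorem.
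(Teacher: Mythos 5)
Your strategy---an explicit extension of the product form $F(re^{i\theta}) = f(e^{i\theta})\rho(r,\theta)$ verified by a polar computation of the Beltrami coefficient---is not what the paper does, and, more importantly, it has a genuine gap at the decisive step: $\rho$ is never specified, and the natural ways of realizing your description fail. Writing $u := r\,\de_r\log\rho$ and $v := \de_\theta\log\rho$, the polar Wirtinger identities give
\[
|\mu_F| \;=\; \left|\frac{u + iv - p}{u - iv + p}\right|, \qquad p = p(e^{i\theta}) = \frac{e^{i\theta}f'(e^{i\theta})}{f(e^{i\theta})}.
\]
If $\rho$ is independent of $\theta$ then $v=0$, and since $|zf'/f|$ is not uniformly bounded for a general strongly starlike function (e.g.\ when $zf'/f=((1+z)/(1-z))^{\alpha}$), no fixed complex value of $u$ keeps this ratio away from $1$; you correctly identify this failure. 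But your proposed fix---locking the direction of $r\,\de_r\log\rho$ to that of $p(e^{i\theta})$---forces $\log\rho$ to depend on $\theta$, and then $v$ acquires a term of size $\log r\cdot|\de_\theta p|$. That term involves $f''$ on the boundary, is in no way controlled by the hypothesis $|\arg p|\le \pi\alpha/2$, and grows without bound as $r\to\infty$; it enters both numerator and denominator additively, so the claimed ``M\"obius reduction in which numerator and denominator differ only by a sector rotation'' never materializes, and in fact $|\mu_F|\to 1$ along rays. No radial or spiral rescaling in the $z$-plane produces the constant $\sin(\pi\alpha/2)$.

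The constant comes from a scale-invariant comparison, which is what your construction is missing. The paper derives the theorem from Corollary \ref{betkercor} applied to the chain \eqref{LCstarlike}, $f_t=e^tf$: there $p(z,t)=f(z)/(zf'(z))$ lies in the sector $\Delta(-\alpha,\alpha)$ exactly when $f$ is strongly starlike of order $\alpha$, and taking $q=p$ in Betker's theorem one estimates
\[
\left|\frac{p-\closure{p}}{p+p}\right| \;=\; \frac{|\Im p|}{|p|} \;=\; |\sin \arg p| \;\le\; \sin\frac{\pi\alpha}{2},
\]
a quantity insensitive to the modulus of $p$. The associated extension identifies $e^{t}f(e^{i\theta})$ with $1/\closure{\omega_t(e^{i\theta})}$ for the inverse chain $\omega_t=f^{-1}\circ(e^{-t}f)$; unwinding this does give an explicit formula, but not one of your product form. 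To repair your argument you would have to either reproduce this structure or exhibit a concrete $\rho$ together with the uniform dilatation estimate, neither of which the proposal supplies.
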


\no
This is generalized to strongly spiral-like functions \cite{Sugawa:2012a}.
Some more results are obtained in \cite{Brown:1984, Hotta:2009} with explicit quasiconformal extensions which correspond to each subclass of $\S$.
In particular, in \cite{Hotta:2009} the research relies on the (classical) Loewner theory, which will be mentioned in the next section.

Sugawa approached this problem by means of the holomorphic motions with extended $\lambda$-Lemma (\cite{MSS:1983}, \cite{Slod:1991}).

 \begin{thm}[{\cite{Sugawa:1999a}}]
Let $k \in [0,1)$ be a constant.
For a given $f \in \A$, let $p$ denote one of the quantities $zf'(z)/f(z), 1 + zf''(z)/f'(z)$ and $f'(z)$.
If 
$$
\left|
\frac{1-p(z)}{1+p(z)}
\right| \le k
$$ 
for all $z \in \D$, then $f \in \S(k)$.
\end{thm}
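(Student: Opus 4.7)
The plan is to realize $f$ as the terminal element of a holomorphic family $\{f_\lambda\}_{\lambda\in\D}$ of normalized univalent functions anchored at the identity, view $F(\lambda,z):=f_\lambda(z)$ as a holomorphic motion of $\D\subset\CC$ parameterized by $\lambda\in\D$, and then invoke Slodkowski's extended $\lambda$-lemma to extend this to a holomorphic motion of all of $\CC$; the slice at $\lambda=k$ will be the desired $k$-quasiconformal extension of $f$.

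To set up the family, write the hypothesis as $|\omega(z)|\le 1$ on $\D$ with
$$
\omega(z):=\frac{1-p(z)}{k(1+p(z))},
$$
noting that $\omega$ is holomorphic because $|1-p|/|1+p|\le k<1$ prevents $1+p$ from vanishing. For each $\lambda\in\D$ define
$$
p_\lambda(z):=\frac{1-\lambda\omega(z)}{1+\lambda\omega(z)},
$$
which is jointly holomorphic on $\D\times\D$, satisfies $p_0\equiv 1$ and $p_k=p$, and obeys $|(1-p_\lambda)/(1+p_\lambda)|=|\lambda\omega(z)|\le|\lambda|<1$, so that $\Re p_\lambda>0$ throughout $\D$. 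Let $f_\lambda\in\A$ be the unique solution with $f_\lambda(0)=0,\,f_\lambda'(0)=1$ of whichever of the three equations
$$
\frac{zf_\lambda'(z)}{f_\lambda(z)}=p_\lambda(z),\qquad 1+\frac{zf_\lambda''(z)}{f_\lambda'(z)}=p_\lambda(z),\qquad f_\lambda'(z)=p_\lambda(z)
$$
corresponds to the given $p$. Because $f\in\A$ forces $p(0)=1$ in each case, $\omega(0)=0$, and hence the integrand $(p_\lambda(\zeta)-1)/\zeta$ appearing in the closed-form solutions is jointly holomorphic on $\D\times\D$; therefore $f_\lambda$ depends holomorphically on both $\lambda$ and $z$, with $f_0(z)\equiv z$ and $f_k=f$. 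Univalence of $f_\lambda$ on $\D$ for each $\lambda\in\D$ is then immediate from $\Re p_\lambda>0$: in the three cases $f_\lambda$ is respectively starlike, convex, or has derivative of positive real part, all classical sufficient conditions for univalence (Noshiro--Warschawski in the last case).

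Consequently $F(\lambda,z):=f_\lambda(z)$ is a holomorphic motion of $\D$ parameterized by $\lambda\in\D$ with base point $0$; enlarging the moved set to $\D\cup\{\infty\}$ by setting $F(\lambda,\infty):=\infty$ preserves all the defining properties. By Slodkowski's theorem this extends to a holomorphic motion $\tilde F:\D\times\CC\to\CC$, and for each $\lambda\in\D$ the slice $\tilde F(\lambda,\cdot)$ is a quasiconformal self-homeomorphism of $\CC$ fixing $\infty$ whose complex dilatation has $L^\infty$-norm at most $|\lambda|$. Setting $\lambda=k$ yields the desired $k$-quasiconformal extension of $f=f_k$ to $\C$.

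The real obstacle here is not in any one calculation but in spotting the correct holomorphic deformation: once $p_\lambda=(1-\lambda\omega)/(1+\lambda\omega)$ is on the table, joint holomorphy and slicewise univalence of $f_\lambda$ essentially write themselves, and Slodkowski's theorem takes care of the rest. The only mildly delicate technical point is the joint-holomorphy verification at $\lambda=0$ for the starlike and convex cases, which reduces to the observation that $\omega(0)=0$---itself a direct consequence of the normalization $f'(0)=1$ that is built into $\A$.
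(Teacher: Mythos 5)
Your proof is correct and follows essentially the route of Sugawa's original argument, which the paper itself describes as ``holomorphic motions with the extended $\lambda$-lemma'': one embeds $f$ into the holomorphic family generated by $p_\lambda=(1-\lambda\omega)/(1+\lambda\omega)$, checks slicewise univalence via the classical starlike/convex/Noshiro--Warschawski criteria, and applies Slodkowski's theorem to the resulting holomorphic motion of $\D\cup\{\infty\}$. The only point worth recording is the trivial case $k=0$, where $\omega$ is undefined but the hypothesis forces $p\equiv 1$ and hence $f(z)=z$, which is already conformally extendible.
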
 

We note that in most of the sufficient conditions of quasiconformal extensions including the above theorems the case $k=1$ reflects univalence criteria.

\

	%
			\section{\bf Classical Loewner theory}
	%

The idea of the parametric representation method of conformal maps was introduced by L\"owner \cite{Loewner:1923}, and later developed by Kufarev \cite{Kufarev:1943} and Pommerenke \cite{Pom:1965}.
It describes a time-parametrized conformal map on $\D$ whose image is a continuously increasing simply connected domain.
The key point is that such a family can be represented by a partial differential equation.
Loewner's approach also made a significant contribution to quasiconformal extensions of univalent functions.
This method was discovered by Becker.

Since our focus in this note is on univalent functions with quasiconformal extensions, we will deal with Loewner chains in the sense of Pommerenke (see \cite{Pom:1975}). For one-slit maps as L\"owner originally considered, see e.g. \cite{delMonacoGumenyuk} which also contains a list of references.
For the classical theory, the reader is also referred to \cite[Chapter III-2]{Goluzin:1969}, \cite[Chapter IX-9]{Tsuji:1975}, \cite[Chapter 3]{Duren:1983}, \cite[Chapter 19]{Henrici:1986}, \cite[Chapter 7-8]{RosenblumRovnyak1994}, \cite[Chapter 7-8]{Hayman:1994}, \cite[Chapter 17]{Conway:1995}, \cite[Chapter 3]{GrahamKohr:2003}.

	%
			\subsection{Classical Loewner chains}
	%

Let $f_{t}(z) = e^t z + \sum_{n=2}^{\infty}a_{n}(t)z^{n}$ be a function defined on $\D \times [0,\infty)$.
$f_{t}$ is said to be a (\textbf{classical}) \textbf{Loewner chain} if $f_{t}$ satisfies the conditions (Fig. 2);

\begin{enumerate}
\def\labelenumi{\textit{\arabic{enumi}}.}
\item $f_{t}$ is holomorphic and univalent in $\D$ for each $t \in [0,\infty)$;
\item $f_{s}(\D) \subset f_{t}(\D)$ for all $0 \leq s < t < \infty$.
\end{enumerate}
\begin{figure}[h]
\begin{center}\label{figure01}
\includegraphics[width=270pt]{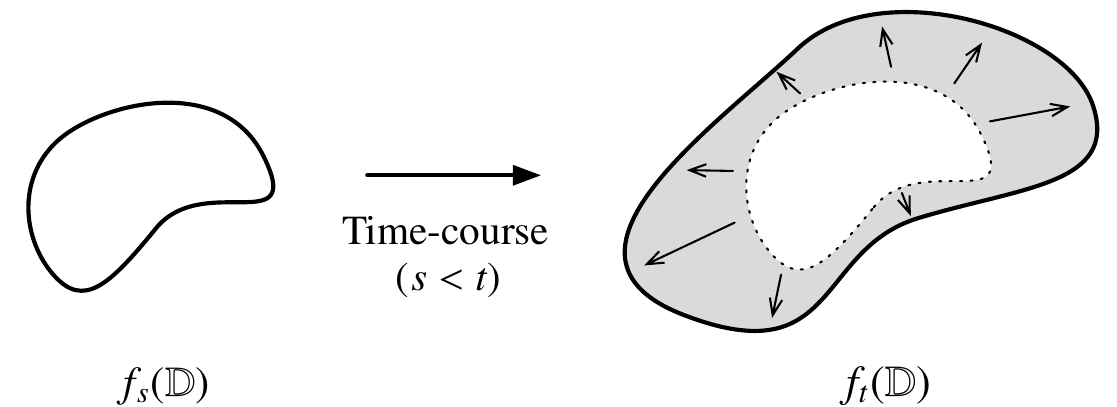}\\[5pt]
{Figure 2. The image of the unit disk $\D$ under $f_{t}$ expands continuously as $t$ increases.}
\end{center}
\end{figure}
One can also characterize it in the geometrical sense.
Let $\{D_t\}_{t \ge 0}$ be a family of simply connected domains having the following properties;
\begin{enumerate}
\def\labelenumi{\textit{\arabic{enumi}$'$}.}
\item $0 \in D_0$;
\item $D_s \subsetneq D_t$ for all $0 \le s < t < \infty$;
\item $D_{t_n} \to D_t$ if $t_n \to t < \infty$ and $D_{t_n} \to \C$ if $t_n \to \infty$ $(n \to \infty)$, in the sense of the kernel convergence.
\end{enumerate}
Then by the Riemann mapping theorem there exists a family of conformal mappings $\{f_t\}_{t \ge 0}$ such that $f_t(0) =0$ and $f_t'(0) > 0$ for all $t \ge 0$. 
We note that $f_{t}$ is continuous on $t \in [0,\infty)$, and $f_s'(0)< f_t'(0)$ for all $s <t $ (for otherwise by the Schwarz Lemma $f_t^{-1} \circ f_s$ is an identity, which contradicts $D_s \subsetneq D_t$).
So after rescaling as $f_0 \in \S$ and reparametrizing as $f_t'(0) = e^t$, we obtain a Loewner chain.

$f_{t}$ has a time derivative almost everywhere on $[0,\infty)$ for each fixed $z \in \D$. In fact, applying the distortion theorem for $\S$ (Theorem \ref{kdistortion}), the next estimate follows.
 \begin{lem}
\label{ftlemma}
For each fixed $z \in \D$, a Loewner chain $f_{t}$ satisfies
$$
|f_{t}(z) - f_{s}(z)| \leq \frac{8|z|}{(1-|z|)^{4}}|e^{t} - e^{s}|
$$
for all $0 \le s \le t < \infty$.
\end{lem}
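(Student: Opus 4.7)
The plan is to reduce the estimate to two ingredients: the Koebe distortion theorem applied to $e^{-t}f_t\in\S$, and a Carath\'eodory-class inequality for a suitably chosen auxiliary self-map of $\D$.

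First, I would exploit condition 2 of the Loewner chain definition to form the transition map $\varphi_{s,t}:=f_t^{-1}\circ f_s\colon\D\to\D$. From $f_s(0)=f_t(0)=0$ and $f_s'(0)=e^s=e^t\varphi_{s,t}'(0)$, one reads off $\varphi_{s,t}(0)=0$ and $\varphi_{s,t}'(0)=e^{s-t}$, so the Schwarz lemma gives $|\varphi_{s,t}(z)|\le|z|$. Writing $f_t(z)-f_s(z)=f_t(z)-f_t(\varphi_{s,t}(z))$ and integrating along the straight segment from $\varphi_{s,t}(z)$ to $z$, which by the previous sentence stays inside $\overline{\D(0,|z|)}$, the Koebe distortion theorem (Theorem~\ref{kdistortion}) applied to $e^{-t}f_t$ yields
$$|f_t(z)-f_s(z)|\le e^t\,\frac{1+|z|}{(1-|z|)^3}\,|z-\varphi_{s,t}(z)|.$$

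The heart of the argument is a matching bound on $|z-\varphi_{s,t}(z)|$ that carries a factor proportional to $1-e^{s-t}$. I would divide out the common zero by setting $\psi(z):=\varphi_{s,t}(z)/z$, a holomorphic function on $\D$ with $\psi(0)=e^{s-t}$ and $|\psi|\le 1$ (by the Schwarz bound just obtained). Consequently $p:=1-\psi$ satisfies $\Re p\ge 1-|\psi|\ge 0$ and $p(0)=1-e^{s-t}\ge 0$, so the standard Carath\'eodory-class majorant $|p(z)|\le p(0)(1+|z|)/(1-|z|)$ applies and gives
$$|z-\varphi_{s,t}(z)|=|z|\,|1-\psi(z)|\le|z|(1-e^{s-t})\,\frac{1+|z|}{1-|z|}.$$

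Combining the two displays and using $e^t(1-e^{s-t})=e^t-e^s$ produces
$$|f_t(z)-f_s(z)|\le(e^t-e^s)\,\frac{|z|(1+|z|)^2}{(1-|z|)^4}\le\frac{4|z|}{(1-|z|)^4}(e^t-e^s),$$
which is twice as sharp as the stated bound. The one delicate point is identifying the right auxiliary function: a direct Schwarz--Pick estimate on $\varphi_{s,t}$ or $\psi$ bounds $|1-\psi(z)|$ only by a universal constant and therefore loses exactly the factor $1-e^{s-t}$ that converts the naive $O(1)$ estimate into the Lipschitz bound in $|e^t-e^s|$.
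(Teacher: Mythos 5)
Your proof is correct and follows essentially the route the paper indicates (and which goes back to Pommerenke): write $f_t-f_s=f_t-f_t\circ\varphi_{s,t}$ with the transition map $\varphi_{s,t}=f_t^{-1}\circ f_s$, control $|z-\varphi_{s,t}(z)|$ by a Schwarz/Carath\'eodory argument applied to $\varphi_{s,t}(z)/z$, and bound $f_t'$ via the Koebe distortion theorem applied to $e^{-t}f_t\in\S$. Your constant $4$ is in fact sharper than the stated $8$, so the lemma follows a fortiori.
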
 

\no
Hence $f_{t}$ is absolutely continuous on $t \in [0,\infty)$ for all fixed $z \in \D$.

A necessary and sufficient condition for a Loewner chain is shown by Pommerenke.

\def\labelenumi{(\roman{enumi})}

 \begin{thm}[\cite{Pom:1965, Pom:1975}]\label{pom}
Let $0 < r_{0} \leq 1$.
Let $f_t(z) = e^{t}z + \sum_{n=2}^{\infty}a_{n}(t)z^{n}$ be a function defined on $\D \times [0,\infty)$.
Then $f_t$ is a Loewner chain if and only if the following two conditions are satisfied;
\begin{enumerate}
\item $f_t$ is holomorphic in $z \in \D_{r_{0}}$ for each $t \in [0,\infty)$, absolutely continuous in $t \in [0,\infty)$ for each $z \in \D_{r_{0}}$ and satisfies
\begin{equation}
\label{upper-bound-ft}
|f_t| \leq K_{0} e^{t} \hspace{20pt} (z \in \D_{r_{0}},\,t \in [0,\infty))
\end{equation}
for some positive constant $K_{0}$.
\item There exists a function $p(z,t)$ analytic in $z \in \D$ for each $t \in [0,\infty)$ and measurable in $t \in [0,\infty)$ for each $z \in \D$ satisfying
$$
\Re p(z,t) > 0 \hspace{20pt} (z \in \D,\,t \in [0,\infty))
$$
such that
\begin{equation} \label{LDE}
\dot{f}_t(z) =z f_t'(z) p(z,t)  \hspace{20pt} (z \in \D_{r_{0}},\,\textup{a.e.}~t \in [0,\infty))
\end{equation}
where $\dot{f} = \de f /\de t$ and $f' = \de f/\de z$.
\end{enumerate}
\end{thm}

The partial differential equation \eqref{LDE} is called the \textbf{Loewner-Kufarev PDE}, and the function $p$ in \eqref{LDE} is called a \textbf{Herglotz function}.

\begin{rem}
Inequality \eqref{upper-bound-ft} and the following classical result due to Dieudonn{\'e}  \cite{Dieudonne:1931} (for the proof, see e.g.  {\cite[p.259]{Tsuji:1975}}) ensure the existence of the uniform radius of univalence of $\{f_{t}\}_{t \ge 0}$;
\textit{Let $f$ be holomorphic on $\D$ satisfying $f(0)=0$, $f'(0) = a >0$ and $|f(z)| < M$ for all $z \in \D$.
Then $f$ is univalent on the disk $\{|z| < \rho <1\}$, where
$$
\rho := \frac{a}{M + \sqrt{M^{2} -a^{2}}}.
$$
}
Hence, although it is not written on the sufficient conditions of Theorem \ref{pom}, $f_{t}$ is implicitly assumed to be univalent on a certain disk whose radius is determined independently from $t \in [0,\infty)$.
\end{rem}

\begin{rem}
\eqref{LDE} describes an expanding flow of the image domain $f_{t}(\D)$ of a Loewner chain.
Indeed, \eqref{LDE} can be written as
$$
|\arg \dot{f}_t(z) - \arg z f_t'(z)| = |\arg p(z,t)| < \frac{\pi}{2}.
$$
It implies that the velocity vector $\dot{f}_t$ at a boundary point of the domain $f_{t}(\D_{r})$ points out of this set and therefore all points on $\de f_t(\D_r)$ moves to outside of $\closure{f_{t}(\D_{r})}$ when $t$ increases (Fig. 3).
\begin{figure}[h]
\begin{center}
\includegraphics[width=190pt]{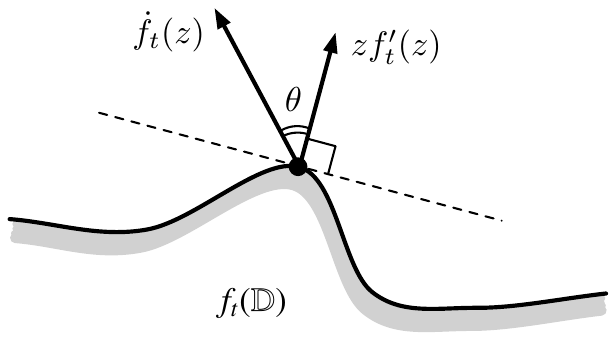}\\[5pt]
{Figure 3. The angle $\theta$ between the normal vector $z f_{t}'$ of the tangent line and the velocity vector $\dot{f}_{t}$ satisfies $|\theta| < \pi/2$.}
\end{center}
\end{figure}
\end{rem}

The next property is also important.
 \begin{thm}
For any $f \in \S$, there exists a Loewner chain $f_t$ such that $f_0 = f$.
\end{thm}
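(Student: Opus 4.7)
The plan is to exploit the geometric reformulation recalled just before the statement: it suffices to construct, for a given $f \in \S$, an exhausting family $\{D_t\}_{t \ge 0}$ of simply-connected domains with $D_0 = f(\D)$ satisfying conditions $(1')$--$(3')$, then take $f_t$ to be the normalized Riemann map $\D \to D_t$ after reparametrizing time so that $f_t'(0) = e^t$. The subtlety is that $f_t'(0)$ depends implicitly on the \emph{shape} of $D_t$, so naive choices such as $D_t := e^t f(\D)$ give a nested family only when $f(\D)$ is starlike at $0$.

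My strategy is to handle single-slit maps first and then approximate. Call $f \in \S$ a \emph{slit map} when $\C \setminus f(\D)$ is a Jordan arc $\gamma \colon [0,1] \to \CC$ with $\gamma(1) = \infty$. For such an $f$, set $D_\tau := \C \setminus \gamma([\tau,1])$ for $\tau \in [0,1)$; this family is simply connected, monotone, and kernel-converges to $\C$ as $\tau \to 1$. Let $g_\tau \colon \D \to D_\tau$ be the normalized Riemann map with $g_\tau(0)=0$ and $g_\tau'(0)>0$. Applying the Schwarz lemma to $g_{\tau_1}^{-1} \circ g_{\tau_2} \colon \D \to \D$ for $\tau_1 > \tau_2$ shows that $\tau \mapsto g_\tau'(0)$ is strictly increasing; Carath\'eodory kernel convergence gives continuity in $\tau$; and the Schwarz lemma applied to $g_\tau^{-1}$ restricted to a large disk inside $D_\tau$ forces $g_\tau'(0) \to \infty$ as $\tau \to 1$. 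Hence there is a homeomorphism $t \mapsto \tau(t)$ from $[0,\infty)$ onto $[0,1)$ with $g_{\tau(t)}'(0) = e^t$, and $f_t := g_{\tau(t)}$ is the required Loewner chain starting at $f$.

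For general $f \in \S$, I would approximate by slit maps $f^{(n)} \to f$ locally uniformly --- a classical density obtained by first replacing $f$ with $f(r_n z)/r_n$ for $r_n \nearrow 1$ to get an analytically bounded Jordan-domain image, then cutting its complement along a Jordan arc to infinity. Each $f^{(n)}$ embeds in a Loewner chain $f_t^{(n)}$ by the previous step, and the Koebe-type growth estimate furnishes the uniform bound
\begin{equation*}
|f_t^{(n)}(z)| \le e^t\,\frac{|z|}{(1-|z|)^2}
\end{equation*}
on $\D \times [0,\infty)$. Hence $\{f_t^{(n)}\}_n$ is a normal family, and a diagonal extraction yields a locally uniform subsequential limit $f_t$ with $f_0 = f$ and $f_t'(0) = e^t$.

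The main obstacle is verifying that this limit is genuinely a Loewner chain, i.e., that each $f_t$ is univalent and that $f_s(\D) \subsetneq f_t(\D)$ for $s < t$. Univalence of each $f_t$ follows from Hurwitz's theorem, since the condition $f_t'(0) = e^t \ne 0$ rules out a constant limit. The non-strict inclusion passes to the limit through Carath\'eodory kernel convergence: if $K \subset f_s(\D)$ is compact, then $K \subset f_s^{(n)}(\D) \subset f_t^{(n)}(\D)$ for all large $n$, whence $K \subset f_t(\D)$. Strictness is then automatic from the Schwarz lemma applied to $f_t^{-1} \circ f_s$, whose derivative at $0$ is $e^{s-t} < 1$, ruling out that it is an automorphism of $\D$.
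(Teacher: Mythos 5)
The paper states this theorem without giving a proof, so there is nothing to compare against directly; your argument is the classical one (essentially Pommerenke's proof via single-slit mappings), and it is correct in its main lines. Both halves work: the slit-map case via monotonicity, continuity and unboundedness of $\tau \mapsto g_\tau'(0)$ followed by reparametrization, and the general case via density of slit maps in $\S$ plus normality and Carath\'eodory kernel convergence. Two points deserve to be made explicit. First, the ``diagonal extraction'' only produces a common subsequence converging at a countable dense set of times; to obtain a limit chain defined for every $t$ you need the Lipschitz estimate
$$
|f_t^{(n)}(z) - f_s^{(n)}(z)| \leq \frac{8|z|}{(1-|z|)^{4}}\,|e^{t} - e^{s}|,
$$
which holds uniformly in $n$ (this is exactly Lemma \ref{ftlemma} of the paper), to see that $f_t^{(n_k)}$ is Cauchy for every $t$ and that the limit is continuous in $t$. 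Second, in the inclusion argument it is worth choosing the compact set $K$ to be connected, containing $0$ and a full neighbourhood of the target point $w_0 \in f_s(\D)$, so that the conclusion $K \subset f_t^{(n)}(\D)$ for large $n$ really places $w_0$ in the kernel of $\{f_t^{(n)}(\D)\}$, which Carath\'eodory's theorem identifies with $f_t(\D)$. With these details filled in, the proof is complete; note also that the paper's definition of a Loewner chain only requires the non-strict inclusion $f_s(\D) \subset f_t(\D)$, so your final Schwarz-lemma step, while correct, is not strictly needed.
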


	%
			\subsection{Evolution families}
	%

In Loewner theory, a two-parameter family of holomorphic self-maps of the unit disk $(\varphi_{s,t}),\,0 \leq s \leq t < \infty$, called an \textbf{evolution family}, plays a key role.
To be precise, $(\varphi_{s,t})$ satisfies the followings;
\begin{enumerate}
\def\labelenumi{\textit{\arabic{enumi}}.}
\item $\varphi_{s,s}(z) = z$;
\item $\varphi_{s,t}(0) = 0$ and $\varphi_{s,t}'(0) = e^{s-t}$;
\item $\varphi_{s,t} = \varphi_{u,t}\circ \varphi_{s,u}$ for all $0 \leq s \leq u \leq t < \infty$.
\end{enumerate}

\no
We note that $\varphi_{s,t}$ is not assumed to be univalent on $\D$.
By means of the same idea as Lemma \ref{ftlemma}, we have the estimate for $0 \le s \le u \le t < \infty$,
$$
\begin{array}{ccc}
\dstyle |\varphi_{s,t}(z) - \varphi_{u,t}(z)| \le \frac{2|z|}{(1-|z|)^{2}}(1-e^{s-u}),\\[12pt]
\dstyle |\varphi_{s,u}(z) - \varphi_{s,t}(z)| \le 2|z|\frac{1+|z|}{1-|z|}(1-e^{u-t}),
\end{array}
$$
for all $z \in \D$.

For a Loewner chain $f_t$, the function $\varphi_{s,t} (z) := (f_t^{-1} \circ f_s)(z)$ defines an evolution family.
Since $f_t(\varphi_{s,t} (z)) = f_s$, differentiating both sides of the equation with respect to $t$ we have $\dot{f}_t(\varphi_{s,t})  + f _t'(\varphi_{s,t}) \dot{\varphi}_{s,t} = 0$.
Hence one can obtain by \eqref{LDE}
\begin{equation}
\label{LODE}
\dot{\varphi}_{s,t}(z) = -\varphi_{s,t}(z) p(\varphi_{s,t}(z),t).
\end{equation}
This is called the \textbf{Loewner-Kufarev ODE}.
The following is the basic result on existence and uniqueness of a solution of the ODE.

 \begin{thm}
\label{loewnerODE}
Suppose that a function $p(z,t)$ is holomorphic in $z \in \D$ and measurable in $t \in [0,\infty)$ satisfying $\Re p(z,t) >0$ for all $z \in \D$ and $t \in [0,\infty)$.
Then, for each fixed $z \in \D$ and $s \in [0, \infty)$, the initial value problem
$$
\frac{dw}{dt} = -wp(w,t)
$$
for almost all $t \in [s,\infty)$ has a unique absolutely continuous solution $w(t)$ with the initial condition $w(s) =z$.
If we write $\varphi_{s,t}(z) := w(t)$, then $\varphi_{s,t}$ is an evolution family and  univalent on $\D$. 
Further, the function $f_s(z)$ defined by
\begin{equation}
\label{limitation}
f_s(z) := \lim_{t \to \infty} e^t \varphi_{s,t}(z)
\end{equation}
exists locally uniformly in $z \in \D$ and is a Loewner chain.

Conversely, if $f_t$ is a Loewner chain and $\varphi_{s,t}$ is an evolution family associated with $f_t$ by $\varphi_{s,t} := f_t^{-1} \circ f_s$.
Then for almost all $t \in [s,\infty)$, $\varphi_{s,t}$ satisfies
$$
\frac{d \varphi_{s,t}}{dt} = -\varphi_{s,t} p(\varphi_{s,t}, t)
$$
for all $z \in \D$, and \eqref{limitation} is satisfied.
\end{thm}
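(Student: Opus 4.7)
The theorem contains two implications, which I handle separately.

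For the direct implication, I would first apply the Carath\'eodory existence and uniqueness theorem to the ODE $\dot w = -wp(w,t)$ with initial data $w(s) = z \in \D$: the right-hand side is holomorphic (hence locally Lipschitz) in $w$ and measurable in $t$, with a locally uniform integrable majorant coming from Harnack's inequality for Herglotz functions, so a unique absolutely continuous local solution exists. To extend this solution to all $t \in [s,\infty)$ and confine it to $\D$, I would compute
\[
\tfrac{d}{dt}|w(t)|^2 \,=\, 2\Re\!\left(\overline{w(t)}\,\dot w(t)\right) \,=\, -2|w(t)|^2 \Re p(w(t),t) \,\le\, 0,
\]
using $\Re p>0$, yielding $|w(t)| \le |z| < 1$. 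Setting $\varphi_{s,t}(z):=w(t)$, the evolution family axioms follow from uniqueness of solutions (for $\varphi_{s,s}=\mathrm{id}$ and the cocycle rule $\varphi_{s,t}=\varphi_{u,t}\circ\varphi_{s,u}$), while the normalizations $\varphi_{s,t}(0)=0$ and $\varphi_{s,t}'(0)=e^{s-t}$ come from linearizing the ODE at $z=0$ (using the standard convention $p(0,t)\equiv 1$ for classical Herglotz functions). Univalence of $\varphi_{s,t}$ on $\D$ follows from holomorphic dependence of the flow on initial data together with backward uniqueness of the ODE.

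The main obstacle is establishing the limit $f_s(z) := \lim_{t\to\infty} e^t\varphi_{s,t}(z)$ and showing it is a Loewner chain. For this I would integrate the ODE in logarithmic form, yielding
\[
e^t \varphi_{s,t}(z) \,=\, e^s z \exp\!\left(\int_s^t \bigl(1 - p(\varphi_{s,\tau}(z),\tau)\bigr)\,d\tau\right).
\]
The Schwarz lemma applied to $e^{\tau-s}\varphi_{s,\tau}\in\S$ gives the exponential decay $|\varphi_{s,\tau}(z)| \le e^{s-\tau}|z|/(1-|z|)^2$, while the standard Harnack-type estimate for functions of positive real part yields $|1-p(w,\tau)| \le 2|w|/(1-|w|)$. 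Together these bound the integrand in absolute value by $Ce^{s-\tau}$ on compact subsets of $\D$, giving absolute convergence of the integral and hence existence of $f_s$ as a locally uniform limit. Since each $e^{t-s}\varphi_{s,t}$ lies in the compact class $\S$, Hurwitz's theorem ensures $f_s$ is univalent with $f_s'(0) = e^s$. Passing to the limit $u\to\infty$ in the identity $e^u \varphi_{s,u} = e^u \varphi_{t,u}\circ\varphi_{s,t}$ produces $f_s = f_t\circ\varphi_{s,t}$, which yields $f_s(\D)\subset f_t(\D)$ and completes the verification that $f_s$ is a Loewner chain.

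For the converse, I would start from a Loewner chain $f_t$ and its associated evolution family $\varphi_{s,t} := f_t^{-1}\circ f_s$. Differentiating $f_t(\varphi_{s,t}(z)) = f_s(z)$ in $t$, which is legitimate thanks to the absolute continuity in Lemma \ref{ftlemma}, gives $\dot f_t(\varphi_{s,t}) + f_t'(\varphi_{s,t})\dot\varphi_{s,t} = 0$; substituting the Loewner--Kufarev PDE \eqref{LDE} for $\dot f_t$ and dividing by $f_t'(\varphi_{s,t})\neq 0$ (univalence of $f_t$) yields the ODE $\dot\varphi_{s,t} = -\varphi_{s,t}\,p(\varphi_{s,t},t)$. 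The limit formula \eqref{limitation} reduces to showing $e^t f_t^{-1}(f_s(z)) \to f_s(z)$; writing $g_t := e^{-t}f_t \in \S$ and using the Koebe distortion theorem (Theorem \ref{kdistortion}) to get the expansion $g_t^{-1}(w) = w + O(|w|^2)$ uniformly in $t$ near $0$, I obtain $e^t\,g_t^{-1}(e^{-t}f_s(z)) = f_s(z) + O(e^{-t})$, as required.
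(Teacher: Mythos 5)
The paper does not actually prove this theorem: it is quoted as the classical existence--uniqueness result (due essentially to Pommerenke), and the only proof-like content in the surrounding text is the two-line formal derivation of the ODE from the Loewner--Kufarev PDE \eqref{LDE} by differentiating $f_t\circ\varphi_{s,t}=f_s$ --- which is exactly your converse step. Your write-up is the standard textbook argument (Carath\'eodory theory for the ODE, monotonicity of $|w(t)|^2$ to confine the flow to $\D$, the logarithmic integration $e^t\varphi_{s,t}(z)=e^sz\exp\bigl(\int_s^t(1-p)\,d\tau\bigr)$ combined with the growth bound $|\varphi_{s,\tau}(z)|\le e^{s-\tau}|z|/(1-|z|)^2$ and the Herglotz estimate $|1-p(w,\tau)|\le 2|w|/(1-|w|)$ to get the limit, and compactness of $\S$ plus Hurwitz for univalence of $f_s$), and it is correct in outline; it supplies considerably more than the paper does.

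Three small points deserve attention. First, the decay estimate $|\varphi_{s,\tau}(z)|\le e^{s-\tau}|z|/(1-|z|)^2$ is the growth theorem (Theorem \ref{kdistortion}) applied to $e^{\tau-s}\varphi_{s,\tau}\in\S$, not the Schwarz lemma, which only gives the non-decaying bound $|z|$; the inequality you wrote is the right one, but the attribution should be fixed since the decay is the whole point. Second, you correctly observe that the normalization $p(0,t)\equiv 1$ is needed to obtain $\varphi_{s,t}'(0)=e^{s-t}$ and the normalization $f_s'(0)=e^s$; this hypothesis is missing from the statement as printed and must be assumed (otherwise only a reparametrized chain is obtained). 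Third, in the converse the chain rule is applied at the moving point $w=\varphi_{s,t}(z)$, and to get the ODE for \emph{all} $z$ outside a \emph{single} null set of $t$ one needs that the PDE \eqref{LDE} holds for all $z$ simultaneously for a.e.\ $t$ (which Theorem \ref{pom} provides) rather than just the pointwise absolute continuity of Lemma \ref{ftlemma}; this is the one genuinely delicate step, though the paper itself glosses over it in the same way.
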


In the first assertion of Theorem \ref{loewnerODE}, it may happen that two different Herglotz functions $p_1$ and $p_2$ generate the same evolution family $\varphi_{s,t}$.
Then $p_1(z,t) = p_2(z,t)$ for almost all $t \ge 0$.
Hence Theorem \ref{loewnerODE} says that there is a one-to-one correspondence between an evolution family and a Herglotz function in such a sense.

	%
			\subsection{Loewner chains and quasiconformal extensions}
	%

An interesting method connecting Loewner theory and quasiconformal extensions was obtained by Becker.

 \begin{thm}[\cite{Becker:1972}, \cite{Becker:1980}]\label{Beckerthm}
Suppose that $f_{t}$ is a Loewner chain for which $p(z,t)$ in \eqref{LDE} satisfying the condition 
\begin{equation}\label{U(k)}
p(z,t) \in U(k) :=
\dstyle
\left\{
w \in \C : \left|\frac{1-w}{1+w}\right| \leq k
\right\}
\end{equation}
i.e., $p(z,t)$ lies in the closed hyperbolic disk $U(k)$ in the right half-plane centered at 1 with radius $\textup{arctanh}\, k$, for all $z \in \D$ and almost all $t \ge 0$.
Then $f_t$ admits a continuous extension to $\closure{\D}$ for each $t \geq 0$ and the map $F$ defined by
\begin{equation}
\label{beckerequation}
F(re^{i\theta}) =
\left\{
\begin{array}{ll}
\dstyle f_0(re^{i\theta}), &\textit{if}\hspace{10pt}   r < 1, \\[5pt]
\dstyle f_{\log r}(e^{i\theta}), &\textit{if}\hspace{10pt}   r \geq 1,
\end{array} 
\right.
\end{equation}
is a $k$-quasiconformal extension of $f_{0}$ to $\C$.
\end{thm}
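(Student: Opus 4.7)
The plan is to define $F$ as in \eqref{beckerequation} and verify three things in turn: (a) $F$ is a well-defined continuous map of $\C$; (b) $F$ is a homeomorphism; (c) in $\C \setminus \closure{\D}$ the complex dilatation satisfies $|\mu_F| \le k$. Because $\de\D$ is an analytic Jordan curve of measure zero, (a)--(c) will force $F$ to be $k$-quasiconformal globally.

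The first preparatory step is to exploit the hypothesis $p(z,t) \in U(k)$. The set $U(k)$ is a closed Euclidean disk lying in the open right half-plane and bounded away from both $0$ and $\infty$; hence there are constants $0 < c_1 \le c_2 < \infty$, depending only on $k$, with $c_1 \le \Re p(z,t) \le |p(z,t)| \le c_2$ uniformly. These bounds, fed into the Loewner--Kufarev ODE of Theorem \ref{loewnerODE}, upgrade the estimate of Section 3.2 to a uniform H\"older estimate on the evolution family $\varphi_{s,t}$: I would show that each $\varphi_{s,t}$ extends as a H\"older homeomorphism of $\closure{\D}$ with exponent $\alpha(k)>0$ and that the extension is jointly continuous in $s,t$. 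Passing to the limit \eqref{limitation} produces, for each $t \ge 0$, a continuous extension of $f_t$ to $\closure{\D}$ which depends continuously on $t$; this yields (a). For (b), continuity of $F$ at $|z|=1$ matches the interior and exterior definitions. The strict inclusion $f_s(\D) \subsetneq f_t(\D)$ for $s<t$ together with the injectivity of each boundary extension $f_t|_{\de\D}$ gives injectivity of $F$ on $\C \setminus \D$ and, combined with the fact that $f_0$ is univalent on $\D$ while $f_t(\de\D) \subset \C \setminus f_0(\D)$, gives global injectivity of $F$. Surjectivity onto $\C$ follows since $\bigcup_t f_t(\D)=\C$ by Koebe's $1/4$-theorem and $f_t'(0)=e^t \to \infty$.

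The heart of the argument is (c). On $\C \setminus \closure{\D}$ I would use the real-analytic diffeomorphism $(t,\theta) \mapsto z = e^{t+i\theta}$, so $F(z) = f_t(\zeta)$ with $\zeta := e^{i\theta}$. Expressing the Wirtinger operators in these coordinates gives $\de_z = \tfrac{1}{2z}(\de_t - i\de_\theta)$ and $\de_{\bar z} = \tfrac{1}{2\bar z}(\de_t + i\de_\theta)$, from which
\begin{equation*}
F_z = \frac{1}{2z}\bigl(\dot{f}_t(\zeta) + \zeta f_t'(\zeta)\bigr),
\qquad
F_{\bar z} = \frac{1}{2\bar z}\bigl(\dot{f}_t(\zeta) - \zeta f_t'(\zeta)\bigr).
\end{equation*}
Substituting the Loewner--Kufarev PDE $\dot f_t(\zeta) = \zeta f_t'(\zeta) p(\zeta,t)$ yields
\begin{equation*}
F_z = \frac{\zeta f_t'(\zeta)\bigl(1+p(\zeta,t)\bigr)}{2z},
\qquad
F_{\bar z} = \frac{\zeta f_t'(\zeta)\bigl(p(\zeta,t)-1\bigr)}{2\bar z},
\end{equation*}
so that $\mu_F(z) = (z/\bar z)\cdot (p(\zeta,t)-1)/(p(\zeta,t)+1)$ and $|\mu_F(z)| \le k$ by the very definition \eqref{U(k)} of $U(k)$.

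The main obstacle is that the PDE \eqref{LDE} is a priori only an interior statement on $\D$, while the calculation above takes place at the boundary point $\zeta \in \de\D$. To justify the substitution I plan to regularize: replace $\zeta$ by $r\zeta$ with $r<1$ and carry out the same computation for the modified map $F_r(z):=f_t(r\zeta)$ defined for $|z| \ge 1/r$; by the uniform boundedness of $p$ and the distortion estimates for $f_t$, the resulting dilatation $\mu_{F_r}$ satisfies $|\mu_{F_r}|\le k$ in its domain, and as $r\to 1^-$ we pass to the limit using the H\"older regularity from step~one and dominated convergence. Finally, invoking an analytic-curve removability lemma for qc maps (or directly the measurable Riemann mapping theorem applied to the Beltrami coefficient obtained by setting $\mu_F=0$ in $\D$ and equal to the above formula outside), one concludes that $F$ is a $k$-quasiconformal map of $\C$ extending $f_0$.
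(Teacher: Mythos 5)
Your plan is essentially the argument this survey itself uses --- not for Theorem \ref{Beckerthm}, which it only motivates in two sentences and attributes to Becker, but for its generalization to $L^d$-Loewner chains of radial type in Section 4.4: the polar-coordinate computation of $\mu_F$ via the Loewner--Kufarev PDE, the regularization $f_t^{\rho}(z):=f_t(\rho z)/\rho$ to keep the computation in the interior of $\D$, and the passage to the limit $\rho\to 1^-$ by normality of the family of $k$-quasiconformal maps $F_\rho$ all appear there, and your dilatation formula $\mu_F=(z/\bar z)\,(p-1)/(p+1)$ is correct. Two remarks. First, your injectivity step is too quick: the strict inclusion $f_s(\D)\subsetneq f_t(\D)$ together with injectivity of each $f_t|_{\de\D}$ does not by itself prevent $f_{t_1}(\de\D)$ from meeting $f_{t_2}(\de\D)$ for $t_1\ne t_2$, since nested open sets can share boundary points. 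The paper closes this by the equality case of the Schwarz lemma: an equality $f_{t_1}^{\rho}(\zeta_1)=f_{t_2}^{\rho}(\zeta_2)$ with $|\zeta_1|=|\zeta_2|=1$ would force $\varphi_{t_1,t_2}$ to be a rotation, hence $p(\cdot,t)$ purely imaginary for $t\in[t_1,t_2]$, contradicting $\Re p\ge(1-k)/(1+k)>0$, which follows from $p\in U(k)$. Second, your preliminary step --- H\"older estimates on $\varphi_{s,t}$ up to $\de\D$ to obtain the continuous boundary extension before anything else --- is closer to Becker's original route but is dispensable in this scheme: once each $F_\rho$ is shown to be a $k$-quasiconformal homeomorphism of $\C$, the locally uniform limit $F=\lim_{\rho\to 1}F_\rho$ is automatically $k$-quasiconformal, and the continuous extension of each $f_t$ to $\closure{\D}$ is read off from $F$ rather than established beforehand. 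With the injectivity step repaired as above, the plan goes through.
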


The idea of the theorem is the following.
By Koebe's 1/4-Theorem (Theorem \ref{1/4-theorem}), $f_t (\D)$ must contain the disk whose center is 0 with radius $e^t/4$.
Thus $f_t(\D)$ tends to $\C$ as $t \to \infty$.
This fact implies that the boundary $\de f_t(\D)$ runs throughout on $\C \,\backslash\,f_0(\D)$.
Therefore the mapping $F : \D^* \to \C \,\backslash\,f_0(\D)$ is constructed by \eqref{beckerequation} which gives a correspondence between the circle $\{|z| = e^t\}$ and the boundary $\de f_t(\D)$.
Its quasiconformality follows from the condition \eqref{U(k)}.

Betker generalized Theorem \ref{Beckerthm} by introducing an inverse version of Loewner chains.
Let $\omega_{t}(z) = \sum_{n=1}^{\infty}b_{n}(t)z^{n}$, $b_{1}(t) \neq 0$, be a function defined on $\D \times [0,\infty)$, where $b_{1}(t)$ is a complex-valued, locally absolutely continuous function on $[0,\infty)$.
Then $\omega_t$ is said to be an \textbf{inverse Loewner chain} if;

\begin{enumerate}
\def\labelenumi{\it \arabic{enumi}.}
\item $\omega_t$ is univalent in $\D$ for each $t \geq 0$;
\item $|b_1(t)|$ decreases strictly monotonically as $t$ increases, and $\lim_{t \to \infty}|b_1(t)| \to 0$;
\item $\omega_{s}(\D) \supset \omega_{t}(\D)$ for $0 \leq s < t < \infty$;
\item $\omega_0(z) = z$ and $\omega_s(0) = \omega_t(0)$ for $0 \leq s \leq t < \infty$.
\end{enumerate}

\no
$\omega_{t}$ also satisfies the partial differential equation
\begin{equation}\label{bb}
\dot{\omega}_t(z) = - z \omega_t'(z) q(z,t)\hspace{15pt}(z \in \D,\,\textup{a.e.}\, t \geq 0),
\end{equation}
where $q$ is a Herglotz function.
Conversely, we can construct an inverse Loewner chain by means of \eqref{bb} according to the following lemma:

 \begin{lem}
\label{BetkerLem}
Let $q(z,t)$ be a Herglotz function.
Suppose that $q(0,t)$ be locally integrable in $[0, \infty)$ with $\int_0^{\infty} \Re q(0, t) dt = \infty$. 
Then there exists an inverse Loewner chain $w_t$ with \eqref{bb}.
\end{lem}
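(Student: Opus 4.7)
The plan is to construct $\omega_t$ by the method of characteristics and reduce everything to Theorem \ref{loewnerODE} via a time-reversal. The PDE \eqref{bb} is a first-order linear transport equation whose characteristic ODE is $\dot{z}(\tau) = z(\tau)\, q(z(\tau),\tau)$, and along characteristics $\omega_\tau(z(\tau))$ is constant. Together with the initial condition $\omega_0 = \textup{id}$, this identifies $\omega_t$ as the inverse of the time-$t$ forward characteristic flow starting at time $0$.

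Concretely, for each $t \ge 0$ and $z \in \D$, I would solve $\dot{y}(\tau) = y(\tau)\, q(y(\tau),\tau)$ on $\tau \in [0,t]$ with terminal condition $y(t) = z$ and set $\omega_t(z) := y(0)$. The substitution $u(\sigma) := y(t-\sigma)$ converts this to the standard Loewner-Kufarev ODE $\dot{u}(\sigma) = -u(\sigma)\, \hat{q}_t(u(\sigma),\sigma)$ with $u(0) = z$, where $\hat{q}_t(z,\sigma) := q(z, t-\sigma)$ is again a Herglotz function on $[0,t]$. Theorem \ref{loewnerODE} applied to $\hat{q}_t$ (trivially extended past $\sigma = t$) yields a univalent solution $u(\sigma) \in \D$ for every $\sigma \in [0,t]$, so $\omega_t(z) = u(t)$ is a well-defined univalent self-map of $\D$.

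The four defining properties of an inverse Loewner chain then follow. The conditions $\omega_0(z) = z$ and $\omega_t(0) = 0$ come from the trivial solutions of the characteristic ODE. Linearising at $z = 0$ gives $b_1(t) = \omega_t'(0) = \exp\!\bigl(-\int_0^t q(0,\tau)\,d\tau\bigr)$, whose modulus is non-increasing and tends to $0$ by the hypothesis $\int_0^\infty \Re q(0,\tau)\,d\tau = \infty$. For the nesting $\omega_t(\D) \subset \omega_s(\D)$ with $0 \le s \le t$, the semigroup property of the characteristic ODE is decisive: since $\frac{d}{d\tau}|y(\tau)| = |y(\tau)|\, \Re q(y(\tau),\tau) \ge 0$, the backward trajectory $y$ from $(z,t)$ stays in $\D$ on $[0,t]$; setting $z_s := y(s) \in \D$, the restriction of $y$ to $[0,s]$ is the backward trajectory from $(z_s,s)$, so $\omega_t(z) = y(0) = \omega_s(z_s) \in \omega_s(\D)$.

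Finally, I would verify \eqref{bb} by introducing the forward characteristic flow $\Phi_t$ from time $0$ to time $t$, so that $\Phi_t(\omega_t(z)) = z$ on $\D$. Differentiating this identity in $t$ at fixed $z$ and using $\dot{\Phi}_t(\zeta) = \Phi_t(\zeta)\, q(\Phi_t(\zeta),t)$ at $\zeta = \omega_t(z)$, together with $\Phi_t'(\omega_t(z)) = 1/\omega_t'(z)$, delivers $\dot{\omega}_t(z) = -z\, \omega_t'(z)\, q(z,t)$ for almost every $t$. The main obstacle I anticipate is bookkeeping the regularity of $\omega_t$ in $t$: since the construction invokes a fresh time-reversed Loewner-Kufarev evolution family for each $t$, some care is needed to upgrade the $\sigma$-regularity handed down by Theorem \ref{loewnerODE} to the absolute continuity and measurability in $t$ required to interpret \eqref{bb} in the almost-everywhere sense.
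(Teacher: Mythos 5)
The paper states this lemma without proof (it is quoted from Betker's 1992 paper), so there is no in-text argument to compare against. Judged on its own terms, your construction is correct and is the natural one: a solution of the transport equation \eqref{bb} with $\omega_0=\textup{id}$ must be the backward flow of the expanding characteristic field $\dot z(\tau)=z(\tau)q(z(\tau),\tau)$, and your time-reversal $u(\sigma)=y(t-\sigma)$ correctly turns each $\omega_t$ into the time-$t$ map of a classical Loewner--Kufarev evolution family, from which univalence, the fixed point at $0$, and the nesting $\omega_t(\D)\subset\omega_s(\D)$ all follow as you indicate. Two points should be tightened. First, Theorem \ref{loewnerODE} as stated is tailored to the normalized situation $p(0,t)\equiv 1$; for the general Herglotz function $\hat q_t$ you need Carath\'eodory existence and uniqueness, and the required integrable majorant on compacta comes from the growth bound $|q(w,t)|\le |q(0,t)|+\Re q(0,t)\,\frac{1+|w|}{1-|w|}$ combined with the hypothesis that $q(0,\cdot)$ is locally integrable --- this is exactly where that hypothesis enters, and the same bound disposes of the regularity issue you flag, since $|\omega_{t_1}(z)-\omega_{t_2}(z)|=|\omega_{t_1}(z)-\omega_{t_1}(\psi(z))|$ with $\psi$ the characteristic flow from time $t_2$ back to $t_1$, so the Koebe distortion bound on $\omega_{t_1}'$ and the integral estimate on $|\psi(z)-z|$ give local absolute continuity of $t\mapsto\omega_t(z)$, locally uniformly in $z$, which is what the chain-rule verification of \eqref{bb} needs. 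Second, the definition of an inverse Loewner chain demands that $|b_1(t)|$ decrease \emph{strictly}; your formula $b_1(t)=\exp\bigl(-\int_0^t q(0,\tau)\,d\tau\bigr)$ yields this because $\Re q(0,\tau)>0$ for almost every $\tau$, so ``non-increasing'' should be upgraded accordingly. With these details supplied the argument is complete.
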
 

By applying the notion of an inverse Loewner chain, we obtain a generalization of Becker's result.

 \begin{thm}[{\cite{Betker:1992}}]
Let $k \in [0,1)$.
Let $f_t$ be a Loewner chain for which $p(z,t)$ in \eqref{LDE} satisfying the condition 
\begin{equation*}
\left|
\frac{p(z,t) -\closure{q(z,t)}}{p(z,t) + q(z,t)}
\right|
\leq k \hspace{20pt}
(z \in \D, \textup{a.e.}\, t \geq 0),
\end{equation*}
where $q(z,t)$ is a Herglotz function.
Let $\omega_t$ be the inverse Loewner chain which is generated with $q$ by Lemma \ref{BetkerLem}. 
Then $f_t$ and $\omega_t$ are continuous and injective on $\closure{\D}$ for each $t \geq 0$, and $f_0$ has a $k$-quasiconformal extension $F : \C \to \C$ which is defined by
\begin{equation*}
F\left(\frac{1}{\closure{\omega_t(e^{i\theta})}}\right) = f_t(e^{i\theta}) \hspace{20pt} (\theta \in [0,2\pi), t \geq 0).
\end{equation*}
\end{thm}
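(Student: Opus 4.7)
The plan is to generalize Becker's construction (Theorem~\ref{Beckerthm}) by replacing the straightforward radial foliation $\{|z|=e^t\}$ of $\D^*$ with a curved foliation produced by the inverse Loewner chain $\omega_t$. The three steps will be: (i) show that $f_t$ and $\omega_t$ extend continuously and injectively to $\closure\D$ for every $t\ge 0$; (ii) match the resulting boundary foliations so that the formula for $F$ gives a homeomorphism of $\C$ coinciding with $f_0$ on $\closure\D$; (iii) compute $\mu_F$ on $\D^*$ from the two Loewner--Kufarev PDEs and verify $|\mu_F|\le k$ almost everywhere.

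For step (i) I would follow Becker's original approach: the hypothesis confines $p$ to a bounded hyperbolic sub-region of the right half-plane (for each fixed $q$), which together with the Koebe distortion estimates yields a continuous, injective boundary extension of $f_t$; a parallel argument, combined with Lemma~\ref{BetkerLem} and the normalization $\omega_t(0)=0$, handles $\omega_t$. For step (ii), properties \textit{2} and \textit{3} of an inverse Loewner chain give a strictly decreasing family of Jordan domains $\omega_t(\D)\subsetneq\D$ contracting to $\{0\}$; the involution $w\mapsto 1/\bar w$ transforms the curves $\omega_t(\T)$ into an increasing family of Jordan curves foliating $\D^*$, with $t=0$ recovering $\T$. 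Analogously $f_t(\T)$ foliates $\C\setminus f_0(\D)$ with $t=0$ giving $f_0(\T)$. Matching the parameters $(t,\theta)$ across these two foliations produces the formula for $F$ in the theorem, and the $t=0$ matching ensures continuity across $\T$.

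For step (iii) introduce $u(t,\theta):=f_t(e^{i\theta})$ and $\eta(t,\theta):=1/\overline{\omega_t(e^{i\theta})}$. Evaluating \eqref{LDE} at $z=e^{i\theta}$ gives $u_t = e^{i\theta}f_t'(e^{i\theta})\,p(e^{i\theta},t)$ and $u_\theta = ie^{i\theta}f_t'(e^{i\theta})$, whence $u_t = -ip\,u_\theta$. A direct differentiation of $\eta = 1/\overline{\omega_t(e^{i\theta})}$, using \eqref{bb}, yields analogously $\eta_t = -i\,\overline{q}\,\eta_\theta$. Since $F(\eta)=u$, the real chain rule produces the linear system
\begin{equation*}
u_t = F_\eta\eta_t + F_{\bar\eta}\overline{\eta_t},\qquad u_\theta = F_\eta\eta_\theta + F_{\bar\eta}\overline{\eta_\theta},
\end{equation*}
which, combined with the two proportionalities above, gives after elementary algebra
\begin{equation*}
\mu_F(\eta) \;=\; \frac{F_{\bar\eta}}{F_\eta} \;=\; -\,\frac{p(e^{i\theta},t)-\overline{q(e^{i\theta},t)}}{p(e^{i\theta},t)+q(e^{i\theta},t)}\cdot\frac{\eta_\theta}{\overline{\eta_\theta}},
\end{equation*}
so $|\mu_F|\le k$ almost everywhere on $\D^*$. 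On $\D$ the map $F=f_0$ is conformal, hence $0$-quasiconformal. A standard sewing/removable-singularity theorem (a homeomorphism of $\C$ that is $k$-quasiconformal off a rectifiable Jordan curve and continuous across it is globally $k$-quasiconformal) then promotes the two pieces to a $k$-quasiconformal self-map of $\C$.

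The central algebraic identity for $\mu_F$ drops out immediately from the two PDEs, so the serious difficulty lies elsewhere. The principal obstacle is step~(i): showing, under the fairly weak Betker-type hypothesis (which neither bounds $p$ nor $q$ individually in a hyperbolic disk), that $f_t$ and $\omega_t$ really do admit continuous injective boundary extensions, so that the pointwise boundary values $p(e^{i\theta},t)$ and $q(e^{i\theta},t)$ used in the computation exist for almost every $(t,\theta)$. A secondary technical point is checking that the $F$ produced by foliation-matching is ACL and has measurable distributional Beltrami coefficient agreeing with the one computed along the $(t,\theta)$-parametrization, which is precisely the regularity needed to invoke the gluing theorem on $\T$.
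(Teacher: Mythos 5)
The paper itself does not prove this theorem---it is quoted from Betker's 1992 paper---so the natural benchmark is the closely analogous argument the paper does carry out in Section 4.4 for radial-type $L^d$-chains, which is also the skeleton of Becker's and Betker's original proofs. Your central Beltrami computation is correct and is indeed the heart of the matter: from $u_t=-ip\,u_\theta$ and $\eta_t=-i\closure{q}\,\eta_\theta$ one gets $|\mu_F|=|p-\closure{q}|/|p+q|\le k$ exactly as you write. The genuine gap is your step (i). You propose to establish the continuous injective boundary extensions of $f_t$ and $\omega_t$ first, by ``Becker's original approach,'' but, as you yourself concede, the hypothesis $|p-\closure{q}|/|p+q|\le k$ places no bound whatsoever on $p$ or $q$ individually---$q$ ranges over the entire right half-plane---so the confinement of $p$ to a compact hyperbolic disk that drives Becker's boundary-continuity argument is simply absent here. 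Since every subsequent step of your plan (the foliation matching, the pointwise boundary values $p(e^{i\theta},t)$, $q(e^{i\theta},t)$, the ACL verification) is built on top of this unproved boundary regularity, the proof as structured does not close.

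The standard repair, used verbatim in the paper's proof of the radial-type theorem in Section 4.4, is to invert the logic: do not prove boundary regularity first, but work with the dilated chains $f_t^{\rho}(z):=f_t(\rho z)/\rho$ and $\omega_t^{\rho}(z):=\omega_t(\rho z)/\rho$ for $\rho<1$. These are analytic across $\closure{\D}$, locally Lipschitz in $t$ by Lemma \ref{ftlemma}, and satisfy the same hypothesis with Herglotz functions $p(\rho z,t)$, $q(\rho z,t)$; your computation then legitimately produces homeomorphisms $F_{\rho}$ of $\C$ that are ACL with $|\mu_{F_\rho}|\le k$ a.e., hence $k$-quasiconformal (this also disposes of the ACL worry you raise at the end, and injectivity of $F_\rho$ is checked as in Section 4.4 via the equality case of the Schwarz lemma). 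Letting $\rho\to1$ and invoking normality of suitably normalized $k$-quasiconformal maps yields the extension $F$ in the limit, and the continuity and injectivity of $f_t$ and $\omega_t$ on $\closure{\D}$ then drop out as corollaries of the existence of $F$ rather than serving as prerequisites. Without this approximation device---or some substitute for it---your step (i) is a missing idea, not a technicality.
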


\no
We obtain Becker's result for $q(z,t) = 1$. 
In this case an inverse Loewner chain is given by $\omega_t(z) = e^{-t}z$.
Further, choosing $\omega$ as $p = q$, we have the following corollary:

 \begin{cor}[{\cite{Betker:1992}}]
\label{betkercor}
Let $\alpha \in [0,1)$.
Suppose that $f_{t}$ is a Loewner chain for which $p(z,t)$ in \eqref{LDE} satisfies
\begin{equation*}
p(z,t) \in \Delta(-\alpha, \alpha) =\left\{ z : -\frac{\alpha \pi}2 \leq \arg z \leq \frac{\alpha \pi}2\right\}
\end{equation*}
for all $z \in \D$ and almost all $t \in [0,\infty)$.
Then $f_t$ admits a continuous extension to $\closure{\D}$ for each $t \geq 0$ and $f_0$ has a $\sin \alpha\pi /2$-quasiconformal extension to $\C$.
\end{cor}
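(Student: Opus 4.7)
The strategy is to invoke Betker's theorem (the preceding result) with the specific choice $q(z,t) := p(z,t)$ and extraction constant $k := \sin(\alpha\pi/2)$. To even apply Lemma~\ref{BetkerLem} to this $q$ and produce the companion inverse Loewner chain $\omega_t$, one needs $q(0,t)$ to be locally integrable on $[0,\infty)$ with $\int_0^\infty \Re q(0,t)\,dt = \infty$. Both are immediate from the classical normalization: substituting $f_t(z) = e^t z + a_2(t) z^2 + \cdots$ into the Loewner--Kufarev PDE~\eqref{LDE} and comparing the coefficients of $z$ on both sides gives $e^t = e^t p(0,t)$, so $p(0,t) \equiv 1$. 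Hence $\Re q(0,t) \equiv 1$ and Lemma~\ref{BetkerLem} yields a valid $\omega_t$.

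Next I translate the sector bound on $p$ into Betker's disk inequality. With $q = p$, a direct computation gives
$$\left|\frac{p(z,t) - \overline{q(z,t)}}{p(z,t) + q(z,t)}\right| = \left|\frac{p - \bar p}{2p}\right| = \frac{|\Im p|}{|p|} = \bigl|\sin(\arg p(z,t))\bigr|.$$
The hypothesis $p(z,t) \in \Delta(-\alpha,\alpha)$ says $|\arg p(z,t)| \le \alpha\pi/2 \le \pi/2$; monotonicity of $\sin$ on $[-\pi/2, \pi/2]$ then gives $|\sin(\arg p(z,t))| \le \sin(\alpha\pi/2)$ for all $z \in \D$ and almost every $t \ge 0$. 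This is exactly the bound required by Betker's theorem with $k = \sin(\alpha\pi/2)$.

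Feeding $f_t$, $p$, and this choice of $q$ into Betker's theorem now delivers simultaneously the continuous injective boundary extension of each $f_t$ to $\closure{\D}$ and the $\sin(\alpha\pi/2)$-quasiconformal extension of $f_0$ to $\C$, exactly as asserted. The only delicate point in the plan is the verification $p(0,t) \equiv 1$, forced by the classical normalization $f_t'(0) = e^t$, without which Lemma~\ref{BetkerLem} could not be invoked for $q = p$. Beyond this, the corollary reduces to the elementary identification $\bigl|(p-\bar p)/(2p)\bigr| = |\sin\arg p|$, which is what converts Betker's half-plane criterion into a sector criterion.
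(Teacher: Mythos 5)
Your proposal is correct and follows exactly the paper's route: the paper derives this corollary by the single remark ``choosing $\omega$ as $p=q$'' in Betker's theorem, which is precisely your choice, and your identity $\bigl|(p-\bar p)/(2p)\bigr| = |\sin\arg p| \le \sin(\alpha\pi/2)$ together with the normalization check $p(0,t)\equiv 1$ (needed for Lemma~\ref{BetkerLem}) just supplies the details the paper leaves implicit.
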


Corollary \ref{betkercor} does not include Theorem \ref{Beckerthm} in view of the dilatation of the extended quasiconformal map.
In fact, the following relation holds;
$$
U(k) \subset \Delta(-k_0, k_0) \hspace{10pt}\textup{where} \hspace{10pt} k_0 := \frac{2}{\pi} \arcsin\left(\frac{2k}{1+k^2}\right) \ge k.
$$
\no
Remark that $k_0=k$ if and only if $k=0$.

In contrast to Becker's quasiconformal extension theorem, the theorem due to Betker does not always provide an explicit quasiconformal extension.
The reason is based on the fact that it is difficult to express an inverse Loewner chain $\omega_t$ which has the same Herglotz function as a given Loewner chain $f_t$ in an explicit form.
For details, see \cite[Section 5]{HottaWang:2014}

	%
\subsection{Applications to the theory of univalent functions}
	%

Here we will see some applications of Theorem \ref{pom} and Theorem \ref{Beckerthm}.
In order to find out explicit Loewner chains which corresponds to the typical subclasses of $\S$, we need to observe their geometric features.
Some Loewner chains are not normalized as $f'(0) = e^t$.
In \cite{Hotta:2010a}, it is discussed that Theorem \ref{pom} and Theorem \ref{Beckerthm} work well without such a normalization.
In fact, a Loewner chain is generalized for a function $f_t(z) = \sum_{n=1}^{\infty} a_t(z) z^t$ where $a_1(t) \neq 0$ is a complex-valued, locally absolutely continuous function on $t \in [0,\infty)$ with $\lim_{n \to \infty} |a_1(t)| = \infty$. Further, either the condition that $|a_1(t)|$ is strictly increasing with respect to $t \in [0,\infty)$, or $f_s(\D) \subsetneq f_t(\D)$ for all $0 \le s < t < \infty$ should be assumed.

\subsubsection{Convex functions}
A function $f \in \S$ is said to be \textbf{convex} and belongs to $\K$ if $f(\D)$ is a convex domain. 
It is known that $f \in \K$ if and only if 
$$\Re \left[1 + \frac{zf''(z)}{f'(z)}\right]>0$$ for all $z \in \D$.
A flow of the expansion for a convex function is considered as following. If a boundary point $\zeta \in \de f(\D)$ moves to the direction of their normal vector $\zeta f'(\zeta)$ according to the parameter $t$ increases, then $\zeta$ always runs on the complement of $f(\D)$ and their trajectories do not cross each other.
In view of this, it is natural to set a Loewner chain as
\begin{equation}
\label{LCconvex}
f_t(z) = f(z) + t \cdot zf'(z)
\end{equation}
Then we have $1/p(z,t) = 1 + t \cdot [1 + (zf''(z)/f'(z))]$ and hence $f_t$ is a Loewner chain if $f \in \K$.

\subsubsection{Starlike functions}
Next, consider a \textbf{starlike function} (with respect to 0), i.e., a function $f \in \S$ such that for every $z \in \D$ the segment connecting $f(z)$ and 0 lies in $f(\D)$. Denote by $\S^*$ the family of starlike functions.
An analytic characterization for starlike functions is $$\Re\left[ \frac{zf'(z)}{f(z)}\right] >0$$ for all $z \in \D$.
It follows from the definition that for a boundary point $\zeta \in\de f(\D)$, the ray $\{ t \zeta : t  \ge 1\}$ always lies in the exterior of $f(\D)$. 
Hence the possible chain for $\S^*$ is
\begin{equation}
\label{LCstarlike}
f_t (z) := e^t f(z).
\end{equation}
A simple calculation shows that $1/p(z,t) = zf'(z)/f(z)$ and therefore $f_t$ is a Loewner chain if $f \in \S^*$.
In the case of \textbf{spiral-like functions}, i.e., functions $f \in \S$ defined by the condition $$\Re \left[e^{-i\lambda} \frac{zf'(z)}{f(z)}\right] > 0$$ for some $\lambda \in (-\pi/2, \pi/2)$, a Loewner chain is given by
\begin{equation}
\label{LCspiral}
f_t(z) := e^{ct} f(z)
\end{equation}
with $c := e^{i\lambda}$ whose trajectories draw logarithmic spirals.
The case $\lambda=0$ corresponds to starlike functions.

\subsubsection{Close-to-convex functions}
For a given $f \in \S$, if there exists a $g \in \S^*$ such that $$\Re \left[ e^{-i\lambda}\frac{zf'(z)}{g(z)} \right]>0$$ for some $\lambda \in (-\pi/2,\pi/2)$ and all $z \in \D$, then $f$ is said to be \textbf{close-to-convex} and we denote by $f \in \CTC$.
The image $f(\D)$ by a close-to-convex function is known to be a \textbf{linearly accessible domain}, namely, $\C\,\backslash\,f(\D)$ is the union of closed half-lines which are mutually disjoint except their end points.
$f$ is said to be \textbf{linearly accessible} if $f(\D)$ is a linearly accessible domain.

A Loewner chain corresponding to the class $\CTC$ is given by
\begin{equation}
\label{ctcchain}
f_t(z) := f(z) + t \cdot e^{i\lambda} g(z).
\end{equation}
Then $1/p(z,t) = e^{-i\lambda}(zf'(z)/g(z)) + t (zg'(z)/g(z))$ and hence $\Re p(z,t) >0$ for all $z \in \D$ and $t \ge 0$.
The validity of the chain \eqref{ctcchain} is given by the following consideration.

Below we consider the case $\lambda =0$.
Take a fixed $\rho \in (0,1)$ and set $f_{\rho}(z) := f(\rho z)/\rho$ and $g_\rho(z) := g(\rho z)/\rho$.
Then $f_t^{\rho} := f_{\rho} + t g_{\rho}$ is well-defined on $\closure{\D}$.
For each boundary point $\zeta_0 \in \de\D$, 
$$
\gamma_{\zeta_0} := \{f_t^{\rho}(\zeta_0) : t \in [0,\infty)\}
$$ 
defines a half-line with an inclination of $\arg g_{\rho}(\zeta_0)$.
Let $\zeta_1 \in \de\D$ be another boundary point with $\zeta_1 \neq \zeta_0$.
Since $f_t^{\rho}$ is a Loewner chain, $\gamma_{\zeta_0}$ and $\gamma_{\zeta_1}$ do not have any intersection.
Further, by the property $f_t^{\rho}(\D) \to \C$ as $t \to \infty$, $\gamma_{\zeta}$ runs throughout $\C\,\backslash\,f_{\rho}(\D)$ if $\arg \zeta$ is taken from 0 to $2\pi$.
Therefore $\bigcup_{\zeta \in \de\D} \gamma_{\zeta} = \C\,\backslash\,f_{\rho}(\D)$ which proves that every $f_{\rho} \in \CTC$ is linearly accessible.
It is known that the family of linearly accessible functions $f \in \S$ is compact in the topology of locally uniform convergence (\cite{Biernacki:1936}).
Hence we conclude that $f = \lim_{\rho \to 1}f_{0}^{\rho}\in \CTC$ is linearly accessible.

One can prove it without compactness of the family of linearly accessible functions.
Let $p_{\zeta}[f]$ be the prime end defined on a domain $f(\D)$ corresponding to a boundary point $\zeta \in \de\D$ and $I_{\zeta}[f]$ be the impression of the prime end $p_{\zeta}[f]$.
It is known that there is a one-to-one correspondence among $\zeta$, $p_{\zeta}[f]$ and $I_{\zeta}[f]$ (see \cite[Chapter 2]{Pom:1992boundary}).
Since $g$ is starlike, for all $w_g \in I_{\zeta_0}[g]\,\backslash\{\infty\}\,$, $\arg w_g$ reflects one real value.
Then redefine $\gamma_{\zeta_0}$ as a family of rays (may consist of only one ray) by
$$
\gamma_{\zeta_0} := \{w_{f} + t \exp(i \arg  w_g) : w_f \in I_{\zeta_0}[f]\,\backslash\{\infty\}\,,\,w_g \in I_{\zeta_0}[g]\,\backslash\{\infty\}\,,\, t \in [0,\infty)\}.
$$
Then $\bigcup_{\zeta \in \de\D}\gamma_{\zeta} = \C \,\backslash\,f(\D)$, for otherwise there exists a point $z \in \C \,\backslash\,f(\D)$ such that $z \not\in \gamma_{\zeta}$ for any $\zeta \in \de\D$ which contradicts the fact that $f_t$ is a Loewner chain.
By choosing proper components of $\bigcup_{\zeta \in \de\D}\gamma_{\zeta}$, a union of closed half-lines for that $f(\D)$ is a linearly accessible domain is given.

The Noshiro-Warschawski class is known as the special case of close-to-convex functions.
Noshiro \cite{Noshiro:1934} and Warschawski \cite{Warschawski:1935} independently proved that if a function $f \in \A$ satisfies $$\Re f'(z)>0$$ for all $z \in \D$, then $f \in \S$ (see e.g. \cite{HottaWang:2014}). 
We denote the family of such functions by $\mathcal{R}$.
Choosing $g(z) = z$ and $ \lambda =0$ in \eqref{ctcchain}, we have the chain
\begin{equation}
\label{LCnw}
f_t(z) := f(z) + t z
\end{equation}
which proves $\mathcal{R} \subset  \CTC \subset \S$.
By this consideration, the following property is derived.
 
\begin{prop}
For a function $f \in \mathcal{R}$, if the boundary of $f(\D)$ is locally connected, then $e^{i\theta} \mapsto f(e^{i\theta}) \in \C$ is one-to-one.
\end{prop}

Further, we can make use of \eqref{LCnw} to observe the shape of $f(\D)$ for an $f \in \mathcal{R}$.
We assume that the boundary of $f(\D)$ is locally connected.
Then the half-line $\gamma_{e^{i\theta}} := \{f(e^{i\theta}) + t e^{i\theta} : t \in [0,\infty)\}$ is well-defined.
Since the inclination of $\gamma_{e^{i\theta}}$ is exactly $\theta$, we obtain the following property for $\mathcal{R}$;
 
\begin{prop}
Let $f \in \S$. If $f(\D)$ contains some sector domain in $\C$, then $f$ does not belong to $\mathcal{R}$.
\end{prop}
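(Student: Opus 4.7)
The plan is to argue by contradiction, closely mirroring the prime-end argument already used just above for close-to-convex functions. Suppose $f \in \mathcal{R}$ and that there exist $w_0 \in \C$ and angles $\alpha_1 < \alpha_2$ such that the sector
$$
S := \{w_0 + re^{i\alpha} : r > 0,\, \alpha_1 < \alpha < \alpha_2\}
$$
is contained in $f(\D)$. I would then derive a contradiction from the geometric picture of $\C \setminus f(\D)$ forced by the Loewner chain \eqref{LCnw}.

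First, I would invoke the chain $f_t(z) = f(z) + tz$ associated with $f \in \mathcal{R}$. By the prime-end version of the analysis performed just above (applied here with the ``starlike reference function'' $g(z) = z$, whose impressions are singletons), one obtains
$$
\C \setminus f(\D) = \bigcup_{\zeta \in \de\D} \gamma_\zeta, \qquad \gamma_\zeta := \{w_f + t\zeta : w_f \in I_\zeta[f] \setminus \{\infty\},\, t \in [0,\infty)\},
$$
where $I_\zeta[f]$ denotes the impression of the prime end of $f(\D)$ corresponding to $\zeta$. The key feature used below is that every half-line composing $\gamma_\zeta$ has direction exactly $\arg \zeta$.

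Next, choose any $\theta^* \in (\alpha_1, \alpha_2)$ and pick a finite point $w_f \in I_{e^{i\theta^*}}[f] \setminus \{\infty\}$ (nonempty since impressions of prime ends are continua in $\CC$ and $f(\D) \neq \C$). The half-line $\ell := \{w_f + te^{i\theta^*} : t \ge 0\}$ is contained in $\gamma_{e^{i\theta^*}}$, hence in $\C \setminus f(\D)$. On the other hand, as $t \to \infty$,
$$
\arg(w_f + te^{i\theta^*} - w_0) \longrightarrow \theta^* \in (\alpha_1, \alpha_2) \qquad \text{and} \qquad |w_f + te^{i\theta^*} - w_0| \longrightarrow \infty,
$$
so $w_f + te^{i\theta^*} \in S \subset f(\D)$ for all sufficiently large $t$. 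This contradicts $\ell \subset \C \setminus f(\D)$ and proves the proposition.

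The routine parts are the asymptotic angle/modulus estimate for the ray $\ell$, which is an elementary planar calculation, and the choice of a finite point in the impression, which is a standard prime-end fact. The main conceptual point, and the step I would be most careful with, is justifying that the decomposition $\C \setminus f(\D) = \bigcup_\zeta \gamma_\zeta$ holds without any local connectedness assumption on $f(\D)$; but this is precisely the argument the text has already carried out in the close-to-convex setting with $g(z)=z$ (which is starlike, so $\arg w_g$ is single-valued on each nonempty $I_\zeta[g]\setminus\{\infty\}$), so the reduction is direct.
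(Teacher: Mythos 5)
Your argument follows the same route as the paper's: attach to $f\in\mathcal{R}$ the chain $f_t(z)=f(z)+tz$, observe that the complement of $f(\D)$ is swept by half-lines of direction exactly $\theta$ issuing from the prime end of $f(\D)$ at $e^{i\theta}$, and note that a half-line of direction $\theta^*\in(\alpha_1,\alpha_2)$ must eventually enter the sector, contradicting that it lies in $\C\setminus f(\D)$. The paper only gestures at this (and only after assuming local connectedness), so your prime-end formulation is a genuine completion; you also correctly isolate the only inclusion actually needed, namely $\ell\subset\C\setminus f(\D)$, which holds because $w_f+te^{i\theta^*}$ lies in the impression $I_{e^{i\theta^*}}[f_t]\subset\de f_t(\D)$ while $f(\D)\subset f_t(\D)$; the full decomposition $\C\setminus f(\D)=\bigcup_\zeta\gamma_\zeta$ (the harder direction) is never used.

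One step does need repair. Your justification that $I_{e^{i\theta^*}}[f]\setminus\{\infty\}\neq\emptyset$ --- ``impressions are continua in $\CC$ and $f(\D)\neq\C$'' --- is not valid: a continuum in $\CC$ may reduce to the single point $\{\infty\}$, and this actually occurs for unbounded members of $\mathcal{R}$. For instance, $f(z)=-z-2\log(1-z)$ has $f'(z)=(1+z)/(1-z)$, so $f\in\mathcal{R}$, yet $\Re f(z)=-\Re z-2\log|1-z|\to+\infty$ as $z\to1$, so the cluster set of $f$ at $1$ is $\{\infty\}$ and $I_{1}[f]\setminus\{\infty\}=\emptyset$. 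The fix is immediate from the freedom you have in choosing $\theta^*$: a univalent function lies in $H^p$ for $p<1/2$ and hence has finite radial limits at almost every boundary point, so almost every $\theta^*\in(\alpha_1,\alpha_2)$ supplies a finite $w_f\in I_{e^{i\theta^*}}[f]$, after which your asymptotic argument on the ray $\ell$ goes through unchanged.
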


\no
For example, $f(z) = ((1+z)/(1-z) -1)/2$ maps $\D$ onto the half-plane.
Hence we immediately conclude that $f \notin \mathcal{R}$ (of course in this case it is easy to see that $f$ does not satisfy $\Re f' >0$ by calculation).

\subsubsection{Bazilevi\v c functions}

For real constants $\alpha >0$ and $\beta \in \R$, set $\gamma = \alpha + i \beta$.
In 1955, Bazilevi\v c \cite{Bazilevic:1955e} showed that the function defined by
$$
f(z)=
\left[
(\alpha + i \beta)
\int_{0}^{z}
h(u) g(u)^{\alpha} u^{i\beta-1}du
\right]^{1/(\alpha + i \beta)}
$$
where $g$ is a starlike univalent function and $h$ is an analytic function with $h(0)=1$ satisfying $\Re (e^{i\lambda}h) > 0$ in $\D$ for some $\lambda \in \R$ belongs to the class $\S$.
It is called a \textbf{Bazilevi\v c function of type $(\alpha, \beta)$} and we denote by $\B(\alpha, \beta)$ the family of Bazilevi\v c functions of type $(\alpha, \beta)$.
A simple observation shows that $f \in \B(\alpha, \beta)$ if and only if
$$
\Re \left\{e^{i \lambda}\frac{zf'(z)}{f(z)}\left(\frac{f(z)}{g(z)}\right)^{\alpha}\left(\frac{f(z)}{z}\right)^{i\beta} \right\} > 0 \hspace{20pt}(z \in \D)
$$
for some $g \in \S^*$.
A Loewner chain for the class $\B(\alpha, \beta)$ is known (\cite[p.166]{Pom:1965}) as
\begin{equation}
\label{LCBazilevic}
f_t(z) = 
	\left(
	f(z)^{\gamma} + t \cdot \gamma  g(z)^{\alpha} z^{i \beta}
	\right)^{1/\gamma}.
\end{equation}

By using the previous argument for close-to-convex functions, we can derive some geometric features for the class $\B(\alpha, \beta)$.
We consider the simple case that the boundaries of $f(\D)$ and $g(\D)$ are locally connected.
Then for each point $\zeta_0 \in \de\D$, the curve 
$
\{ 
	\delta_{\zeta_0}(t) := (f(\zeta_0)^{\gamma} + t \cdot \gamma  g(\zeta_0)^{\alpha} {\zeta_0}^{i \beta})^{1/\gamma} : t \in [0,\infty) 
\}
$ 
is defined. 
Hence $f(\D)$ is described as the complement of a union of such curves.

Observe the behavior of the curve. 
If $\beta >0$ (or $\beta <0$), then it draws an asymptotically similar curve as a logarithmic spiral which evolves counterclockwise (or clockwise).
On the other hand, in the case when $\beta =0$, firstly it draws a spiral, then tends to a straight line as $t$ gets large.
In both cases, the curvature $d_t \arg \delta_{\zeta_0}'(t) = \Im [\delta_{\zeta_0}''(t)/\delta_{\zeta_0}'(t)]$ is always positive or negative.
From this fact one can construct functions which do not belong to any $\B(\alpha, \beta)$ easily.
Consider a slit domain $\C\,\backslash\,\gamma$.
If the curvature of the slit $\gamma$ takes both positive and negative values (ex. $\gamma = \{x+ iy : y=\sin x\; \text{ and }\; x > 0\}$), or $\gamma$ is not smooth (ex. $\gamma = \{x \ge 0\} \cup \{iy : y \in (0,1)\}$), then such slit domains cannot be images of $\D$ under any $f \in \B(\alpha, \beta)$.

	%
\subsection{Applications to quasiconformal extensions}
	%

Applying Theorem \ref{Beckerthm} to the chains \eqref{LCconvex}, \eqref{LCstarlike}, \eqref{LCspiral}, \eqref{ctcchain}, \eqref{LCnw} and \eqref{LCBazilevic} we obtain quasiconformal extension criteria for each subclass of $\S$ with explicit extensions. 
In this case the chains \eqref{LCconvex}, \eqref{ctcchain}, \eqref{LCnw} and \eqref{LCBazilevic} should be reparametrized by $e^t-1$.
The theorems can be found in \cite{Hotta:2009, Hotta:2010a, HottaWang, Hotta:phi}.
Further, by Theorem \ref{betkercor} with the chains \eqref{LCstarlike} and \eqref{LCspiral} we obtain quasiconformal extension criteria given by \cite{FaitKrzyzZ:1976} and \cite{Sugawa:2012a}.
For an explicit extension of these cases, see \cite{HottaWang:2014}.

The other typical example is Theorem \ref{Ahlforscriterion}, Ahlfors's quasiconformal extension criterion.
It can be obtained by Theorem \ref{Beckerthm} with the chain
$$
f_t(z) := f(e^{-t}z) + \frac1{1+c}(e^t-e^{-t})zf'(e^{-t}z),
$$
for then
$$
\frac{1-p(z,t)}{1+p(z,t)}
\,\,=\,\,
\frac{zf_t'(z) - \dot{f}_t(z)}{zf_t'(z) + \dot{f}_t(z)}
\,\,=\,\,
c \frac1{e^{2t}} + \left(1 - \frac1{e^{2t}}\right) \frac{e^{-t}z f'(e^{-t}z)}{f''(e^{-t}z)}.
$$

\

	%
\section{\bf Modern Loewner theory}
	%

Recently a new approach to treat evolution families and Loewner chains in a general framework has been suggested by Bracci, Contreras, D\'iaz-Madrigal and Gumenyuk (\cite{BracciCD:evolutionI}, \cite{BracciCD:evolutionII}, \cite{MR2789373}).
It enables us to describe a variety of the dynamics of one-parameter family of conformal mappings.
In this section we outline the theory of generalized evolution families and Loewner chains.
The key fact is that there is an (essentially) one-to-one correspondence among evolution families and Herglotz vector fields.
We also present some results about generalized Loewner chains with quasiconformal extensions.

\subsection{Semigroups of holomorphic mappings}

Let $D$ be a simply connected domain in the complex plane $\C$.
We denote the family of all holomorphic functions on $D$ by $\hol(D,\C)$.
If $f \in \hol(\D, \C)$ is a self-mapping of $\D$, then we will denote the family of such functions by $\hol(\D)$.

An easy consequence of the well-known Schwarz-Pick Lemma, $f \in \hol(\D) \backslash \{ \textup{id}\}$ may have at most one fixed point in $\D$.
If such a point exists, then it is called the \textbf{Denjoy-Wolff point} of $f$.
On the other hand, if $f$ does not have a fixed point in $\D$, then the Denjoy-Wolff theorem (see e.g. \cite{ElinShoikhet:2010}) claims that there exists a unique boundary fixed point $\angle \lim_{z \to \tau}f(z) = \tau \in \de\D$ such that the sequence of iterates $\{f^n\}_{n \in \N}$ converges to $\tau$ locally uniformly, where $\angle \lim$ denotes an angular (or non-tangential) limit, and $f^n$ an $n$-th iterate of $f$, namely, $f^1 := f$ and $f^n := f \circ f^{n-1}$.
In this case the boundary point $\tau$ is also called the \textbf{Denjoy-Wolff point}.
Remark that a boundary fixed point is not always the Denjoy-Wolff point.
A simple example is observed with a holomorphic automorphism of $\D$, $f(z) = (z+a)/(1+\bar{a}z)$ with $a \in \D\backslash \{0\}$.
$f$ has two boundary fixed points $\pm a/|a|$, but only one $a/|a|$ can be the Denjoy-Wolff point.

A family $(\phi_t)_{t \geq 0}$ of holomorphic self-mappings of $\D$ is called a \textbf{one-parameter semigroup} if;

\begin{enumerate}
\def\labelenumi{\it{\arabic{enumi}}.}
\item $\phi_0  = id_{\D}$;
\item $\phi_t \circ \phi_s = \phi_{s+t}$ for all $s, t \in [0, \infty)$;
\item $\lim_{t \to 0^+} \phi_t (z) = z$ locally uniformly on $\D$;
\end{enumerate}

\no
In the definition, only right continuity at 0 is required.

The following theorem is fundamental in the theory of one-parameter semigroups.

\begin{thm}
Let $(\phi_t)_{t \ge 0}$ be a one-parameter semigroup of holomorphic self-mappings of $\D$.
Then for each $z \in \D$ there exists the limit
\begin{equation}
\label{fund_semi01}
\lim_{t \to 0^+}\frac{\phi_t(z) -z}{t} =: G(z)
\end{equation}
such that $G \in \hol(\D, \C)$. The convergence in \eqref{fund_semi01} is uniform on each compact subset of $\D$.
Moreover, the semigroup $(\phi_t)_{t \ge 0}$ can be defined as a unique solution of the Cauchy problem 
\begin{equation*}
\dstyle \frac{d\phi_t(z)}{dt} = G(\phi_t(z)) \hspace{15pt} (t \ge 0) 
\end{equation*}
with the initial condition $\phi_0(z) = z$.
\end{thm}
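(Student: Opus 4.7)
The plan is to prove the two directions by different methods: the forward statement (semigroup yields generator) rests on a normal-family argument combined with the semigroup identity, and the converse is standard ODE theory.

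For the first direction, I would begin by upgrading the right continuity at $t=0$ to joint continuity of $(t,z)\mapsto \phi_t(z)$ on $[0,\infty)\times\D$. The semigroup relation $\phi_{t+s}=\phi_t\circ\phi_s$ together with continuity at $0$ yields continuity in $t$ for every $t\geq 0$, and the Schwarz--Pick lemma produces a Lipschitz bound on each compact subset of $\D$ that is uniform in $t$. Hence, for each compact $K\subset\D$ there exist $T>0$ and a larger compact $K'\subset \D$ with $\phi_t(K)\subset K'$ for all $t\in[0,T]$.

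Next I would study the difference quotients $h_t(z):=(\phi_t(z)-z)/t$ for $t\in(0,T]$. The uniform control above ensures $\{h_t\}$ is locally uniformly bounded, so Montel's theorem provides a normal family. The main obstacle is to promote normality to genuine convergence. The key identity is
\begin{equation*}
\frac{\phi_{t+s}(z)-\phi_s(z)}{t}=\frac{\phi_t(\phi_s(z))-\phi_s(z)}{t}.
\end{equation*}
If $t_n\to 0^+$ is a subsequence along which $h_{t_n}\to G$ locally uniformly, letting $n\to\infty$ in the identity yields $\partial_s \phi_s(z)=G(\phi_s(z))$. Since the Cauchy problem $\partial_s w=G(w)$, $w(0)=z$ has at most one solution, the semigroup $\phi_s$ uniquely determines $G$; hence every convergent subsequence of $\{h_t\}$ has the same limit, and Vitali's theorem upgrades this to locally uniform convergence. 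The limit $G$ is holomorphic as a locally uniform limit of holomorphic functions.

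For the converse, given $G\in\hol(\D,\C)$, the classical Picard--Lindel\"of theorem produces, for each $z\in\D$, a unique local solution $\phi_t(z)$. Holomorphic dependence on initial data gives $\phi_t\in\hol(\D)$, and the semigroup law $\phi_{t+s}=\phi_t\circ\phi_s$ follows because both sides, viewed as functions of $s$, solve the same Cauchy problem with initial value $\phi_t(z)$ at $s=0$ and therefore coincide by uniqueness. The condition $\phi_0=\textup{id}$ together with continuity of ODE solutions yields the required right continuity at $0$. One subtlety worth flagging is that global existence on $[0,\infty)$ with $\phi_t(\D)\subset\D$ is not automatic for arbitrary $G\in\hol(\D,\C)$; it requires a semicompleteness (Berkson--Porta) condition on $G$, which appears to be tacitly assumed in the statement.
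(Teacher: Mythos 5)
The paper states this theorem without proof (it is quoted as a known result in a survey), so the comparison is against the standard argument in the literature rather than against an in-paper proof. Your converse direction is essentially correct, and your closing caveat is well taken: for an arbitrary $G \in \hol(\D,\C)$ the Cauchy problem need not admit a global solution with $\phi_t(\D) \subset \D$, and the statement tacitly restricts to semicomplete vector fields (which is exactly what the Berkson--Porta representation quoted immediately afterwards characterizes).

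The forward direction, however, has a genuine gap at its foundation. You write that ``the uniform control above ensures $\{h_t\}$ is locally uniformly bounded,'' but the control you established is only that $\phi_t(K)\subset K'$ for $t\in[0,T]$, which gives $|\phi_t(z)-z|\le \operatorname{diam} K'$ and hence only $|h_t(z)|\le \operatorname{diam}(K')/t$, which blows up as $t\to 0^+$. The estimate $|\phi_t(z)-z|=O(t)$ is not a consequence of continuity plus Schwarz--Pick; it is essentially the content of the theorem, and without it Montel's theorem does not apply and the entire normal-family strategy cannot start. The classical way to close this (Berkson--Porta) is the averaging trick: set $F_h(z):=\frac{1}{h}\int_0^h\phi_s(z)\,ds$; for small $h$ this is uniformly close to the identity on a compact set, hence univalent there with image covering a slightly smaller disk, and $F_h(\phi_t(z))=\frac{1}{h}\int_t^{t+h}\phi_s(z)\,ds$ is $C^1$ in $t$ by the fundamental theorem of calculus, so $\phi_t=F_h^{-1}\circ(\text{$C^1$ function of }t)$ is differentiable in $t$, locally uniformly in $z$. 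A secondary, patchable issue: passing to the limit in your key identity along a subsequence $t_n\to 0^+$ only shows that the difference quotients converge along that subsequence, not that $\partial_s\phi_s(z)$ exists; so you cannot yet invoke uniqueness for the Cauchy problem to identify subsequential limits. One can repair this by first establishing local Lipschitz continuity of $t\mapsto\phi_t(z)$ (so the derivative exists a.e.\ and must agree with every subsequential limit, whence all limits coincide by the identity theorem), but that Lipschitz bound is again the very point that is missing.
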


\no
The above function $G \in \hol(\D, \C)$ is called the \textbf{infinitesimal generator} of the semigroup.
Various criteria which guarantee that a homeomorphic function $G \in \hol(\D, \C)$ is the infinitesimal generator are known.
As one of them, in 1978 Berkson and Porta gave the following fundamental characterization.

\begin{thm}[{\cite{BerksonPorta:1978}}]
A holomorphic function $G \in \hol(\D, \C)$ is an infinitesimal generator if and only if there exists a $\tau \in \closure{\D}$ and a function $p \in \hol(\D, \C)$ with $\Re p(z) \geq 0$ for all $z \in \D$ such that
\begin{equation}\label{BPformula}
G(z) = (\tau-z)(1-\bar{\tau} z)p(z)
\end{equation}
for all $z \in \D$. 
\end{thm}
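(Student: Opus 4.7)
The plan is to prove the two implications separately and, in the necessity direction, to split into the cases $|\tau|<1$ and $|\tau|=1$ for the common Denjoy-Wolff point $\tau$ of the semigroup.

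For the sufficiency, assume $G(z)=(\tau-z)(1-\bar{\tau}z)p(z)$ with $\Re p\geq 0$. The Cauchy problem $\dot\phi_t(z)=G(\phi_t(z))$, $\phi_0(z)=z$, has a local solution by Picard, and once one shows the trajectories stay in $\D$, the semigroup property and right-continuity at $t=0$ follow from autonomy of the ODE together with local uniqueness. To establish invariance of $\D$, I would first approximate $p$ by $p_n$ with $\Re p_n\geq 1/n$, solve the corresponding ODEs, and compute $\tfrac{d}{dt}(1-|\phi_t(z)|^2)$ to extract an estimate forcing each approximating solution to remain strictly inside $\D$; a normal-families passage to the limit then recovers the general case. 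The key algebraic input is that $\Re[\bar w(\tau-w)(1-\bar{\tau}w)]$, combined with $\Re p\geq 0$, controls the radial escape rate from the disc.

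For the necessity, I would first identify $\tau\in\closure{\D}$ as a fixed point of the whole semigroup. If some $\phi_{t_0}$ ($t_0>0$) has an interior fixed point $w$, then writing $\phi_{t_0}=\phi_{t_0/2^n}\circ\cdots\circ\phi_{t_0/2^n}$ and invoking the uniqueness of interior fixed points of nontrivial holomorphic self-maps recursively yields that $w$ is fixed by every $\phi_{t_0/2^n}$, hence by $\phi_t$ on a dense set of $t$, and by right-continuity for all $t\geq 0$; set $\tau:=w\in\D$. Otherwise, each $\phi_t$ ($t>0$) has a boundary Denjoy-Wolff point and the semigroup relation forces all these to coincide at a single $\tau\in\de\D$. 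Now define $p(z):=G(z)/[(\tau-z)(1-\bar{\tau}z)]$. When $|\tau|<1$, differentiating $\phi_t(\tau)\equiv\tau$ at $t=0$ gives $G(\tau)=0$, cancelling the simple zero of the denominator at $\tau$; when $|\tau|=1$, the denominator equals $\bar{\tau}(\tau-z)^2$ and vanishes on $\closure{\D}$ only at $\tau\in\de\D$, so $p$ is automatically holomorphic on $\D$.

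The heart of the argument is the sign estimate $\Re p(z)\geq 0$. In the interior case, I would consider
$$\rho_t(z):=\frac{M_\tau(\phi_t(z))}{M_\tau(z)},\qquad M_\tau(z)=\frac{z-\tau}{1-\bar{\tau}z}.$$
The conjugate $M_\tau\circ\phi_t\circ M_\tau^{-1}$ is a self-map of $\D$ fixing $0$, so Schwarz's lemma yields $|\rho_t(z)|\leq 1$. Since $\rho_0\equiv 1$, a direct computation gives $\dot\rho_0(z)=-(1-|\tau|^2)\,p(z)$, and $|\rho_t|^2\leq 1=|\rho_0|^2$ forces $\Re\dot\rho_0(z)\leq 0$, whence $\Re p(z)\geq 0$. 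In the boundary case $|\tau|=1$ this Schwarz-type argument degenerates because of the vanishing factor, and I would instead invoke Julia's lemma: the semigroup law forces the angular derivatives to satisfy $\phi_t'(\tau)=e^{-\lambda t}$ for some $\lambda\geq 0$, and Julia then provides
$$\frac{|\tau-\phi_t(z)|^2}{1-|\phi_t(z)|^2}\;\leq\;e^{-\lambda t}\cdot\frac{|\tau-z|^2}{1-|z|^2}.$$
Taking logarithms, differentiating at $t=0$, and using the identity $(\tau-z)(1-\bar{\tau}z)=\bar{\tau}(\tau-z)^2$ available when $|\tau|=1$, one extracts $\Re p(z)\geq 0$ after multiplying through by the positive factor $|\tau-z|^2$.

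The main obstacle will be the boundary case. Showing that all $\phi_t$ share the same Denjoy-Wolff point on $\de\D$, that the angular derivatives $\phi_t'(\tau)$ exist for every $t\geq 0$ and obey the multiplicative semigroup law $\phi_{s+t}'(\tau)=\phi_s'(\tau)\phi_t'(\tau)$, and that Julia's differential inequality can be massaged into the clean form $\Re p\geq 0$, all require more delicate hyperbolic-geometric input than the straightforward Schwarz calculation that handles the interior case.
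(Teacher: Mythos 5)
The paper itself offers no proof of this statement: it is quoted as a known result of Berkson and Porta, so there is nothing internal to compare your argument against. Judged on its own terms, your outline is the standard proof and the computations you sketch do check out. For sufficiency, the algebraic identity you allude to is $\Re\bigl[\bar w(\tau-w)(1-\bar\tau w)\bigr]=-|w-\tau|^2$ on $|w|=1$, so $\Re[\bar wG(w)]=-|w-\tau|^2\Re p(w)\le 0$ there; this is exactly the flow-invariance condition for $\closure{\D}$, and your regularisation $\Re p_n\ge 1/n$ is a reasonable way to keep approximating trajectories strictly inside before passing to the limit. For necessity, your Schwarz computation is correct: with $M_\tau(z)=(z-\tau)/(1-\bar\tau z)$ one gets $M_\tau'/M_\tau=(1-|\tau|^2)/[(1-\bar\tau z)(z-\tau)]$, hence $\dot\rho_0=-(1-|\tau|^2)p$, and $|\rho_t|\le 1=|\rho_0|$ forces $\Re\dot\rho_0\le 0$. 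In the boundary case, writing $u_t(z)=|\tau-\phi_t(z)|^2/(1-|\phi_t(z)|^2)$, Julia's inequality $u_t\le\phi_t'(\tau)u_0\le u_0$ with equality at $t=0$ gives $\dot u_0\le 0$, and the derivative simplifies (using $1-\bar\tau z=\bar\tau(\tau-z)$) to $-2|\tau-z|^2\Re p(z)/(1-|z|^2)\le 0$; note you do not actually need the exponential law $\phi_t'(\tau)=e^{-\lambda t}$, only $\phi_t'(\tau)\le 1$, which slightly shortens the part you identify as the main obstacle. The remaining points to nail down are standard but real: joint continuity of $(t,z)\mapsto\phi_t(z)$ (so that the dense-set fixed-point argument and the coincidence of the boundary Denjoy--Wolff points go through), existence of the angular derivative at the common boundary fixed point, the trivial semigroup as a degenerate case, and uniqueness of $(\tau,p)$ is not claimed so need not be addressed.
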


The equation \eqref{BPformula} is called the \textbf{Berkson-Porta representation}.
In fact, the point $\tau$ in \eqref{BPformula} is the Denjoy-Wolff point of the one-parameter semigroup generated with $G$.

	%
	%
\subsection{Generalized evolution families in the unit disk}
	%

We have discussed in Section 3.1 that a Loewner chain $f_t$ (in the classical sense) defines a function $\varphi_{s,t} := f_t^{-1} \circ f_s : \D \to \D$ which is called an evolution family. 
Recently, this notion and one-parameter semigroups are unified and generalized as following.

 \begin{definition}[{\cite[Definition 3.1]{BracciCD:evolutionI}}]
			\label{EVd}
A family of holomorphic self-maps of the unit disk $(\varphi_{s,t}),\,0 \leq s \leq t < \infty$, is an \textbf{evolution family} if;
\begin{enumerate}
\def\labelenumi{EF\arabic{enumi}.}
\item $\varphi_{s,s}(z) = z$;
\item $\varphi_{s,t} = \varphi_{u,t}\circ \varphi_{s,u}$ for all $0 \leq s \leq u \leq t < \infty$;
\item for all $z \in \D$ and for all $T>0$ there exists a non-negative locally integrable function $k_{z, T} :[0,T] \to  \R_{\ge 0}$ such that
$$
|\varphi_{s,u}(z) - \varphi_{s,t}(z)| \leq \int_{u}^{t} k_{z, T}(\xi) d\xi
$$
for all $0 \leq s \leq u \leq t \leq T$.
\end{enumerate}
\end{definition}

\no
We denote the family of evolution families by $\EF$.
%
%
%

\begin{rem}
If $(\phi_{t}) \subset  \hol(\D)$ is a one-parameter semigroup, then $(\varphi_{s,t})_{0 \le s \le t < \infty} := (\phi_{t-s})_{0 \le s \le t < \infty}$ forms an evolution family.
\end{rem}

\begin{rem}
In \cite{BracciCD:evolutionI} and \cite{MR2789373}, the definitions of evolution families and some other relevant notions contain an integrability order $d\in[1,+\infty]$. 
Since this parameter is not important for the discussions in this article, we assume that $d=1$ which is the most general case of the order.
\end{rem}

Some fundamental properties of $\EF$ are derived as follows.

\begin{thm}[{\cite[Proposition 3.7, Corollary 6.3]{BracciCD:evolutionI}}]
Let $(\varphi_{s,t}) \in \EF$. 
\begin{enumerate}
\item $\varphi_{s,t}$ is univalent in $\D$ for all $0 \leq s \leq t < \infty$.
\item For each $z_0 \in \D$ and $s_0 \in [0,\infty)$, $\varphi_{s_0,t}(z_0)$ is locally absolutely continuous on $t \in [s_0,\infty)$.
\item For each $z_0 \in \D$ and $t_0 \in (0,\infty)$, $\varphi_{s,t_0}(z_0)$ is absolutely continuous on $s \in [0,t_0]$.
\end{enumerate}
\end{thm}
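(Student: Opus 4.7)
The plan is to establish the three assertions in the order (2), (3), (1), so that the absolute-continuity facts are available for the more delicate univalence argument. Part (2) reads off directly from EF3 with $s = s_0$: the bound
\[
|\varphi_{s_0,t_1}(z_0) - \varphi_{s_0,t_2}(z_0)| \le \int_{t_1}^{t_2} k_{z_0,T}(\xi)\,d\xi,
\]
combined with the integrability of $k_{z_0,T}$, is exactly the definition of local absolute continuity of $t \mapsto \varphi_{s_0,t}(z_0)$.

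For (3), I would exploit the cocycle EF2 to write, for $0 \le s_1 \le s_2 \le t_0$,
\[
\varphi_{s_1,t_0}(z_0) - \varphi_{s_2,t_0}(z_0) = \varphi_{s_2,t_0}\bigl(\varphi_{s_1,s_2}(z_0)\bigr) - \varphi_{s_2,t_0}(z_0).
\]
Since $\varphi_{s_2,t_0}$ is a holomorphic self-map of $\D$, Schwarz--Pick gives the uniform bound $|g(z) - g(w)| \le \frac{2}{1-r^2}|z-w|$ for all $z, w \in \overline{\D_r}$ and every such $g$. For $s_2 - s_1$ small the point $\varphi_{s_1,s_2}(z_0)$ stays in a fixed compact subdisk containing $z_0$, and combining this Lipschitz bound with the EF3 estimate $|\varphi_{s_1,s_2}(z_0) - z_0| \le \int_{s_1}^{s_2} k_{z_0,T}(\xi)\,d\xi$ yields absolute continuity on $[0,t_0]$.

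For (1), the approach is close-to-identity plus composition. From EF3 with $u = s$ one has $\varphi_{s,t}(z) \to z$ pointwise as $t \to s^+$, and since the family $\{\varphi_{s,t}\}$ is uniformly bounded by $1$ this upgrades by Montel normality to locally uniform convergence $\varphi_{s,t} \to \mathrm{id}$; Cauchy's formula then gives $\varphi_{s,t}' \to 1$ locally uniformly. Writing
\[
\varphi_{s,t}(z_1) - \varphi_{s,t}(z_2) = (z_1 - z_2) \int_0^1 \varphi_{s,t}'\bigl(z_2 + \lambda(z_1 - z_2)\bigr)\,d\lambda,
\]
the integrand is nonzero for $t - s$ sufficiently small, so $\varphi_{s,t}$ is injective on any fixed $\overline{\D_r}$ with $r < 1$. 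For arbitrary $t \ge s$, I pick a partition $s = t_0 < t_1 < \cdots < t_n = t$ and use EF2 to express $\varphi_{s,t}$ as the composition of the short-time factors $\varphi_{t_{i-1},t_i}$.

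The main obstacle is this last step: the images $\varphi_{s,t_i}(\overline{\D_r})$ could a priori drift toward $\de \D$ as $i$ grows, so the ``local'' injectivity scale of the next factor might shrink below the available gap $t_{i+1} - t_i$. The key control is that $u \mapsto \varphi_{s,u}(0)$ is continuous on $[s,t]$ by (2) and hence has compact image in $\D$; Schwarz--Pick then forces all the sets $\varphi_{s,u}(\overline{\D_r})$, $u \in [s,t]$, into a single common compact subset $K \subset \D$. A uniform $\delta > 0$ for the close-to-identity bound on $K$, together with compactness of $[s,t]$, produces a partition of the required mesh, and the composition via EF2 gives univalence of $\varphi_{s,t}$ on $\overline{\D_r}$; since $r < 1$ is arbitrary, univalence holds on all of $\D$.
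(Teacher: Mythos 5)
Your three-part argument is sound in outline and, for what it is worth, there is nothing in the paper to compare it against: the survey states this result without proof, quoting it directly from Bracci--Contreras--D\'{\i}az-Madrigal (Proposition 3.7 and Corollary 6.3 of the cited work), and your route is essentially the standard one from that source. Parts (2) and (3) are complete as written: (2) really is just EF3 with $s=s_0$ read as the definition of local absolute continuity, and in (3) the combination of the cocycle identity, the Euclidean Lipschitz bound that Schwarz--Pick yields on a compact subdisk, and the EF3 estimate for $|\varphi_{s_1,s_2}(z_0)-z_0|$ does the job --- the only point to spell out is that $\varphi_{s_1,s_2}(z_0)$ stays in a fixed $\closure{\D_r}$, which follows because $\int_E k_{z_0,t_0}(\xi)\,d\xi$ is small once the total length of the intervals involved is small.

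The one place where you assert rather than prove is the \emph{uniform} $\delta>0$ in part (1): you need $\varphi_{u,v}$ to be injective on the common compact set $K$ whenever $v-u<\delta$, with $\delta$ independent of $u\in[s,t]$. Your pointwise argument produces a threshold $\delta_u$ for each fixed $u$, and compactness of $[s,t]$ alone does not upgrade this to a uniform one, since the $\delta_u$ could a priori degenerate. The repair is standard but should be said: EF3 plus absolute continuity of the Lebesgue integral gives $|\varphi_{u,v}(z)-z|\le\int_u^v k_{z,T}(\xi)\,d\xi\to 0$ as $v-u\to 0$ uniformly in $u\in[0,T]$ for each fixed $z$, and a Vitali/normal-families argument (if not, extract sequences $u_n,v_n$ with $v_n-u_n\to0$ and points $z_n$ in a compact set witnessing the failure, and pass to a locally uniform limit, which must be the identity) turns this into uniformity on $\closure{\D_{r'}}$; Cauchy's estimate then gives $|\varphi_{u,v}'-1|<1/2$ on $K$ for all $v-u<\delta$. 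Alternatively one can avoid the uniform $\delta$ altogether by a connectedness argument on $\{v\in[s,t]:\varphi_{s,v}|_{\closure{\D_r}}\ \text{injective}\}$, using Hurwitz's theorem for closedness. With that supplied, your confinement of the images $\varphi_{s,u}(\closure{\D_r})$ to a single compact $K$ via continuity of $u\mapsto\varphi_{s,u}(0)$ and Schwarz--Pick, and the factorization through EF2, complete the proof correctly.
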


Next, we extend the notion of infinitesimal generators to the same structure as evolution families.

\begin{definition}[{\cite[Definition 4.1, Definition 4.3]{BracciCD:evolutionI}}]
\textbf{A Herglotz vector field} on the unit disk $\D$ is a function $G : \D \times [0,\infty) \to \C$ with the following properties;
\begin{enumerate}
\def\labelenumi{HV\arabic{enumi}.}
\item for all $z \in \D$, $G(z,\,\cdot\,)$ is measurable on $[0,\infty)$;
\item for any compact set $K \subset \D$ and for all $T>0$, there exists a non-negative locally integrable function $k_{K,T} :[0,T] \to \R_{\ge 0}$ such that
$$
|G(z,t)| \leq k_{K,T}(t)
$$
for all $z \in K$ and for almost every $t \in [0,T]$;
\item for almost all $t \in [0,\infty)$, $G(\,\cdot\,t)$ is an infinitesimal generator.
\end{enumerate}
\end{definition}
 
\no
We denote  by $\HV$ the family of all Herglotz vector fields.

The following theorem states the relation between $(\varphi_{s,t}) \in \EF$ and $G \in \HV$.
In what follows, an \textbf{essentially unique} $f(x)$ means if there exists another function $g(x)$ which satisfies the statement then $f(x) = g(x)$ for almost all $x$.

\begin{thm}[{\cite[Theorem 5.2, Theorem 6.2]{BracciCD:evolutionI}}]
\label{EFtoHVthm}
For any $(\varphi_{s,t}) \in \EF$, there exists an essentially unique $G \in \HV$ such that
\begin{equation}
\label{EFtoHV}
\frac{d\varphi_{s,t}(z)}{d t} = G(\varphi_{s,t}(z), t)
\end{equation}
for all $z \in \D$, all $s \in [0,\infty)$ and almost all $t \in [s,\infty)$.
Conversely, for any $G \in \HV$, a family of unique solutions of \eqref{EFtoHV} with the initial condition $\varphi_{s,s}(z) =z$ generates an evolution family.
\end{thm}

The similar mutual characterization holds between a Herglotz vector field and a pair of the generalized Denjoy-Wolff point $\tau$ and a generalized Herglotz function.

 \begin{definition}[{\cite[Definition 4.5]{BracciCD:evolutionI}}]
	
			\label{HFd}
			
\textbf{A Herglotz function} on the unit disk $\D$ is a function $p : \D \times [0,\infty) \to \C$ with the following properties;
\begin{enumerate}
\def\labelenumi{HF\arabic{enumi}.}
\item for all $z \in \D$, $p(z,\,\cdot\,)$ is locally integrable on $[0,\infty)$;
\item for almost all $t \in [0,\infty)$, $p(\,\cdot\,,t)$ is holomorphic on $\D$;
\item $\Re p(z,t) \geq 0$ for all $z \in \D$ and almost all $t \in [0,\infty)$. 
\end{enumerate}
\end{definition}

\no
We denote $\HF$ the family of all Herglotz functions.

\begin{thm}[{\cite[Theorem 4.8]{BracciCD:evolutionI}}]
\label{HVtoBPthm}
Let $G \in \HV$.
Then there exists an essentially unique measurable function $\tau : [0,\infty) \to \closure{\D}$ and $p \in \HF$ such that
\begin{equation}\label{HVtoBP}
G(z,t) = (\tau(t)-z) (1-\closure{\tau(t)} z) p(z,t)
\end{equation}
for all $z \in \D$ and almost all $t \in [0,\infty)$.
Conversely, for a given measurable function $\tau : [0,\infty) \to \closure{\D}$ and $p \in \HF$, the equation \eqref{HVtoBP} forms a Herglotz vector field.
\end{thm}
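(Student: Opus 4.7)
The plan is to derive \eqref{HVtoBP} from the classical Berkson-Porta theorem \eqref{BPformula} applied pointwise in $t$, and then to address three separate issues: uniqueness of the pair $(\tau(t),p(\cdot,t))$, measurability of $\tau$, and the $L^d$-integrability condition HF1 on $p$. The converse direction will be a direct verification of WV1--WV3.

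For the pointwise factorization, I first fix the full-measure set on which $G(\cdot,t)$ is an infinitesimal generator; for each such $t$ the Berkson-Porta theorem yields $\tau(t)\in\closure{\D}$ and $p(\cdot,t)\in\hol(\D,\C)$ with $\Re p(\cdot,t)\ge 0$ satisfying \eqref{HVtoBP}. Essential uniqueness of $\tau(t)$ follows from the minimum principle for the harmonic function $\Re p(\cdot,t)$: any such $p(\cdot,t)$ is either identically zero or strictly positive-real-part on all of $\D$, hence zero-free. Consequently, if $G(\cdot,t)\not\equiv 0$, any interior zero of $G(\cdot,t)$ must come from the factor $(\tau(t)-z)$ and equals $\tau(t)$ uniquely; when $G(\cdot,t)$ has no interior zero, $\tau(t)\in\partial\D$ is singled out as the Denjoy-Wolff point of the semigroup generated by $G(\cdot,t)$. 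On the exceptional null set one sets $\tau(t):=0$ and $p(\cdot,t)\equiv 0$.

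The main technical hurdle, I expect, is establishing simultaneously the measurability of $\tau$ and the $L^d$-integrability of $p$. For measurability, I would show that the assignment from a nontrivial infinitesimal generator to its Denjoy-Wolff point is continuous in the topology of locally uniform convergence on compacta (since the Berkson-Porta factors depend continuously on the input), and then compose with the measurable map $t\mapsto G(\cdot,t)\in\hol(\D,\C)$ (measurability of this map follows from WV1, WV2 and the Cauchy integral formula for Taylor coefficients). For the $L^d$-bound, the naive formula
$$
p(z_0,t)=\frac{G(z_0,t)}{(\tau(t)-z_0)(1-\closure{\tau(t)}z_0)}
$$
can fail when $\tau(t)$ approaches $z_0$; my plan is to fix two test points $z_1,z_2\in\D$ such that $\tau(t)$ cannot be simultaneously close to both, apply the formula at whichever of $z_1,z_2$ is bounded away from $\tau(t)$, use WV3 to control the numerator, and propagate the resulting $L^d$-envelope to arbitrary $z_0$ via the Harnack/Schwarz-Pick distortion inequality for functions with positive real part.

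For the converse, given measurable $\tau$ and $p\in\HF^d$, define $G(z,t):=(\tau(t)-z)(1-\closure{\tau(t)}z)\,p(z,t)$. Property WV2 is immediate from holomorphy of $p(\cdot,t)$, and WV1 follows from measurability of products in $t$. For WV3 one uses the uniform bound $|\tau(t)-z|\cdot|1-\closure{\tau(t)}z|\le 4$ on $\D\times[0,\infty)$ together with the standard Harnack-type estimate that converts HF1 into a locally uniform $L^d$-bound on $|p(z,t)|$ for $z$ in a compact subset of $\D$. Finally, for almost every $t$ the factorization \eqref{HVtoBP} with the given $\tau(t)$ and $p(\cdot,t)$ is precisely \eqref{BPformula}, so by the classical Berkson-Porta theorem $G(\cdot,t)$ is an infinitesimal generator, and $G\in\HV^d$ as required.
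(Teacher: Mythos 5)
The survey states this theorem without proof, quoting it verbatim from \cite[Theorem~4.8]{BracciCD:evolutionI}, so there is no in-paper argument to compare against; judged on its own terms (and against the proof in the cited source), your proposal is essentially correct and follows the same overall route: pointwise Berkson--Porta factorization, identification of $\tau(t)$ through the zero set of $G(\cdot,t)$, and separate treatments of the measurability of $\tau$ and of the $L^{d}$-envelope for $p$. Two points deserve care in a full write-up. First, the dichotomy ``$p(\cdot,t)$ is identically zero or has strictly positive real part'' omits the case $p(\cdot,t)\equiv ic$ with $c\in\R\setminus\{0\}$; this does not affect the conclusion you actually use (zero-freeness of $p(\cdot,t)$ whenever it is not identically zero), but it should be stated correctly. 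Second, the continuity of the Denjoy--Wolff-point map on the set of nontrivial generators is the genuinely delicate step and needs more than the phrase ``the factors depend continuously on the input'': when the limit generator $G$ has an interior zero, Hurwitz gives $\tau_n\to\tau$; but when $G$ is zero-free in $\D$ one must take a convergent subsequence $\tau_{n_k}\to\sigma\in\partial\D$, pass to the locally uniform limit of the quotients $G_{n_k}/\bigl((\tau_{n_k}-\cdot)(1-\overline{\tau_{n_k}}\,\cdot)\bigr)$ to obtain a Berkson--Porta representation of $G$ with pole datum $\sigma$, and invoke the uniqueness of that representation to force $\sigma=\tau$. Your two-test-point device for the $L^{d}$-bound, combined with the Schwarz--Pick/Harnack propagation $|p(z,t)|\le C(z,z_i)\,|p(z_i,t)|$ for holomorphic maps into the closed right half-plane, is exactly the right mechanism (one should also record that $t\mapsto p(z_0,t)$ is measurable, e.g.\ via the quotient formula off $\{\tau(t)=z_0\}$ and $p(z_0,t)=-G'(z_0,t)/(1-|z_0|^2)$ on it), and the converse verification is routine.
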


\no
For convenience, we call the above measurable function $\tau : [0,\infty) \to \overline{\D}$ the \textbf{Denjoy-Wolff function} and denote by $\tau \in \DW$.
A pair $(p, \tau)$ of $p \in \HV$ and $\tau \in \DW$ is called the \textbf{Berkson-Porta data} for $G \in \HV$.
We denote the set of all Berkson-Porta data by $\BP$.
Hence, there is an essentially one-to-one correspondence among $(\varphi_{s,t}) \in \EF$, $G \in \HV$ and $(p,\tau) \in \BP$. 
In particular, the relation of $\varphi_{s,t}$ and $(p,\tau)$ is described by the ordinary differential equation
\begin{equation}\label{evolution}
\dot{\varphi}_{s,t}(z) = (\tau(t) - \varphi_{s,t}(z))(1 - \closure{\tau(t)}\varphi_{s,t}(z)) p(\varphi_{s,t}(z),t)
\end{equation}
which incorporates the Loewner-Kufarev ODE \eqref{LODE} and the Berkson-Porta representation \eqref{BPformula} as special cases.

	%
			\subsection{Generalized Loewner chains}
	%

According to the notion of evolution families, Loewner chains are also generalized as follows.

 \begin{definition}[{\cite[Definition 1.2]{MR2789373}}]
	
			\label{LdLC}
			
A family of holomorphic functions $(f_{t})_{t \geq 0}$ on the unit disk $\D$ is called a \textbf{Loewner chain} if;
\def\labelenumi{LC\arabic{enumi}.}
\begin{enumerate}
	\item $f_{t} : \D \to \C$ is univalent for each $t \in [0,\infty)$;
	\item $f_{s}(\D) \subset f_{t}(\D)$ for all $0 \leq s < t < \infty$;
	\item for any compact set $K \subset \D$ and all $T>0$, there exists a non-negative function $k_{K,T} : [0,T] \to \R_{\ge 0}$ such that
	\begin{equation*}
		|f_{s}(z) - f_{t}(z)| \leq \int_{s}^{t} k_{K,T}(\xi) d\xi
	\end{equation*}
	for all $z \in K$ and all $0 \leq s \leq t \leq T$.
\end{enumerate}
Further, a Loewner chain will be said to be \textbf{normalized} if $f_{0} \in \S$.
\end{definition}

We denote a family of Loewner chains by $\LC$.
Remark that in Definition \ref{LdLC}, any assumption is not required to $f_t(0)$ and $f_{t}'(0)$.
It implies that a subordination property that $f_s(\D_r) \subset f_t(\D_r)$ for all $r \in (0,1)$ and $0 \le s<t < \infty$ does not hold any longer in general.
Further, we even do not know whether the \textbf{Loewner range} $$\Omega[(f_t)] := \bigcup_{t \geq 0} f_t(\D)$$ is the whole complex plane or not.

The next theorem gives a relation between Loewner chains and evolution families.

 \begin{thm}[{\cite[Theorem 1.3]{MR2789373}}]
	
			\label{LEtoEV}
			
For any $(f_t) \in \LC$, if we define
\begin{equation*}\label{LEtoEV1}
	\varphi_{s,t}(z) := (f_t^{-1} \circ f_s) (z) \hspace{15pt}(z \in \D,\, 0 \le s \le t < \infty)
\end{equation*}
then $(\varphi_{s,t}) \in \EF$. Conversely, for any $(\varphi_{s,t}) \in \EF$, there exists an $(f_t) \in \LC$ such that the following equality holds
\begin{equation}\label{LEtoEV2}
(f_t \circ \varphi_{s,t})(z) = f_s(z) \hspace{15pt}(z \in \D,\,0 \le s \le t < \infty).
\end{equation}
\end{thm}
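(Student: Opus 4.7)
The plan is to handle the two directions separately, with the converse being substantially harder because there is no preferred normalization at the origin in the generalized setting.

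For the first (forward) direction, I would first check that $\varphi_{s,t} := f_t^{-1} \circ f_s$ is a well-defined holomorphic self-map of $\D$: the inclusion $f_s(\D) \subset f_t(\D)$ from LC2 together with univalence of $f_t$ from LC1 lets us pre-compose $f_t^{-1}$ on $f_s(\D)$, landing in $\D$. Properties EF1 and EF2 are then immediate from the identities $f_s^{-1}\circ f_s = \textup{id}$ and $(f_t^{-1}\circ f_u)\circ(f_u^{-1}\circ f_s) = f_t^{-1}\circ f_s$. The only nontrivial point is EF3. For fixed $z\in\D$ and $0\le s\le u\le t\le T$, write
$$
\varphi_{s,u}(z)-\varphi_{s,t}(z) \;=\; \varphi_{s,u}(z)-\varphi_{u,t}(\varphi_{s,u}(z)) \;=\; f_t^{-1}(f_t(w))-f_t^{-1}(f_u(w)),
$$
where $w:=\varphi_{s,u}(z)$. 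One then estimates the right-hand side by a standard mean-value bound for the univalent map $f_t^{-1}$, combined with the $L^d$-regularity estimate LC3 applied to $|f_t(w)-f_u(w)|$. The pre-image $w$ stays in a compact subset of $\D$ depending only on $z$ and $T$, so the Koebe distortion theorem (Theorem~\ref{kdistortion}) applied to the univalent functions $f_t^{-1}/\,(f_t^{-1})'(f_t(0))$ on a fixed compact subset of $f_t(\D)$ yields a bound for $|(f_t^{-1})'|$ that is \emph{uniform in $t\in[0,T]$}. This last uniformity is the main technical point I expect to need careful handling.

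For the converse direction, the classical formula $f_s(z)=\lim_{t\to\infty}e^t\varphi_{s,t}(z)$ is unavailable because neither $\varphi_{s,t}(0)=0$ nor $\varphi_{s,t}'(0)=e^{s-t}$ is assumed. The approach I would take is to build the Loewner range abstractly. Form the disjoint union $\D\times[0,\infty)$ and introduce the equivalence relation $(z,s)\sim(w,t)$ generated by $\bigl(z,s\bigr)\sim\bigl(\varphi_{s,u}(z),u\bigr)$ for all $s\le u$; then identify also $(z,s)\sim(w,t)$ when $\varphi_{s,u}(z)=\varphi_{t,u}(w)$ for some $u\ge s,t$. The quotient $\Omega$ carries a natural Riemann surface structure because each injection $\iota_t:\D\to\Omega$ taking $z$ to $[z,t]$ is an injective chart, and two such charts are holomorphically related via the univalent maps $\varphi_{s,t}$. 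The key facts to verify are that $\Omega$ is a \emph{connected simply-connected} Riemann surface (connectedness comes from EF2 making $\Omega$ a directed union; simple connectedness comes from the fact that each $\iota_t(\D)\subset\Omega$ is simply connected and the family is nested up to these identifications). By the Uniformization Theorem, $\Omega$ is biholomorphic to one of $\CC$, $\C$, or $\D$; a short argument rules out $\CC$ (for instance, $\iota_0(\D)$ is a proper simply-connected subset). Fix such a uniformization $\Phi:\Omega\to\tilde\Omega\subset\C$ and set $f_t := \Phi\circ\iota_t$. By construction $f_t$ is univalent on $\D$, $f_s(\D)\subset f_t(\D)$, and $f_t\circ\varphi_{s,t}=f_s$, which gives LC1, LC2 and the required compatibility \eqref{LEtoEV2}.

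It then remains to check the $L^d$-regularity LC3 of the constructed chain $(f_t)$. From $f_s=f_t\circ\varphi_{s,t}$ write
$$
f_s(z)-f_t(z) \;=\; f_t(\varphi_{s,t}(z))-f_t(z),
$$
and bound the right-hand side by $\sup_{K'}|f_t'|\cdot|\varphi_{s,t}(z)-z|$ for $z$ in a compact set $K\subset\D$ and an enlarged compact set $K'\supset K\cup\varphi_{s,t}(K)$. The univalent distortion theorem furnishes a bound for $\sup_{K'}|f_t'|$ that is locally uniform in $t$, while EF3 supplies the $L^d$-integrable majorant for $|\varphi_{s,t}(z)-z|$. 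Combining these two yields the desired integrable majorant $k_{K,T}$. The hard part, as already flagged, is obtaining the uniformity in $t$ of the distortion constants in both directions; once that is in hand, the rest is the abstract Riemann-surface construction of the Loewner range and its uniformization.
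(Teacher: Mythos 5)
First, a point of context: the survey does not prove this theorem at all --- it is quoted verbatim from Contreras--D\'{\i}az-Madrigal--Gumenyuk \cite{MR2789373}, so there is no in-paper proof to compare against. Measured against the proof in that reference, your converse direction takes a genuinely different route: the cited paper builds the chain analytically, by normalizing $\varphi_{s,t}$ with the affine maps $w\mapsto (w-\varphi_{0,t}(0))/\varphi_{0,t}'(0)$ so that the normalized $\varphi_{0,t}$ land in the compact class $\S$, and then extracting a locally uniform limit as $t\to\infty$; you instead construct the Loewner range abstractly as a direct limit of copies of $\D$ glued along the $\varphi_{s,t}$ and uniformize. Your construction is sound as far as it goes (the colimit along open holomorphic embeddings is a Hausdorff, second countable, simply connected, non-compact Riemann surface, so uniformization applies and LC1, LC2 and \eqref{LEtoEV2} come out for free), and it is arguably cleaner for producing the algebraic structure; what the analytic construction buys instead is that the distortion estimates needed for LC3 fall out of the same compactness argument, whereas in your setup LC3 has to be proved separately afterwards.

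That said, in both directions you defer precisely the steps that constitute the real content, and one of them is asserted where it needs proof. In the forward direction you claim that $w=\varphi_{s,u}(z)$ ``stays in a compact subset of $\D$ depending only on $z$ and $T$.'' This is not automatic: in the generalized setting $\varphi_{s,u}$ has no common fixed point, so Schwarz--Pick alone does not prevent $\varphi_{s,u}(z)$ from drifting to $\partial\D$ as $s,u$ vary. The correct argument compares $\mathrm{dist}(f_s(z),\partial f_u(\D))\ge\mathrm{dist}(f_s(z),\partial f_s(\D))$ with the two-sided Koebe estimate for $f_u$ at $w$, and this in turn requires a lower bound on $(1-|z|^2)|f_s'(z)|$ that is uniform in $s\in[0,T]$ --- a non-degeneracy statement that must itself be extracted from LC2/LC3 (the increasing images prevent collapse). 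The same uniform-in-$t$ distortion bounds are needed for $\sup_{K'}|f_t'|$ in your LC3 verification. Second, EF3 as defined is pointwise in $z$ (the majorant is $k_{z,T}$), while LC3 demands a majorant uniform over compact sets; your LC3 argument quietly uses EF3 in the locally uniform form. That upgrade is true but is a separate lemma (proved in \cite{MR2789373} via Cauchy/Schwarz--Pick estimates), not a restatement. With these two items supplied, your blueprint would close; without them it is an architecture rather than a proof.
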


\no
Differentiate both sides of \eqref{LEtoEV2} with respect to $t$ then $f_t'(\varphi_{s,t})\cdot \dot{\varphi}_{s,t} + \dot{f}_t(\varphi_{s,t}) =0$ and therefore 
combining to \eqref{evolution} we have the following generalized Loewner-Kufarev PDE
\begin{equation}
\label{generalizeLPDE}
\dot{f}_t(z) = (z - \tau(t))(1-\closure{\tau(t)}z) f_t'(z)p(z,t).
\end{equation}

We shall observe \eqref{generalizeLPDE}.
Since the term $\dot{f}_t(z)$ gives a velocity vector at the point $f_t(z)$, the right-hand side of the equation \eqref{generalizeLPDE} defines a vector field on $f_t(\D)$. 
Assume that $p$ is not identically equal to zero.
Then $\dot{f}_{t}(z) =0$ if $z = \tau(t)$.
It implies that the point $f_t(\tau(t))$ plays a role of an "eye" of the flow described by $f_t(z)$. 
Since the Denjoy-Wolff function $\tau$ is assumed to be only measurable w.r.t. $t$, the origin $f_t(\tau(t))$ of the vector field moves measurably.
This observation indicates that Loewner chain describes various flows of expanding simply connected domains.
The classical radial Loewner-Kufarev PDE is given as the special case of \eqref{generalizeLPDE} with $\tau \equiv 0$.

In general, for a given evolution family $(\varphi_{s,t})$, the equation \eqref{LEtoEV2} does not define a unique Loewner chain.
That is, there is no guarantee that $\LL[(\varphi_{s,t})]$, the family of normalized Loewner chains associated with $(\varphi_{s,t}) \in \EF$, consists of one function.
However, $\LL[(\varphi_{s,t})]$ always includes one special Loewner chain (in \cite{MR2789373}, such a chain is called \textbf{standard}) and in this sense $(f_t)$ is determined uniquely.
Further, it is sometimes the only member of $\LL[(\varphi_{s,t})]$.
The following theorem states such properties of the uniqueness for Loewner chains.

 \begin{thm}[{\cite[Theorem 1.6 and Theorem 1.7]{MR2789373}}]
	
			\label{LCunique}
			
Let $(\varphi_{s,t}) \in \EF$. 
Then there exists a unique normalized $(f_{t}) \in \LC$ such that $\Omega[(f_t)]$ is either $\C$ or an Euclidean disk in $\C$ whose center is the origin.
Furthermore;
\begin{itemize}
\item The following 4 statements are equivalent;
\begin{enumerate}
\item $\Omega[(f_t)]= \C$;
\item $\LL[(\varphi_{s,t})]$ consists of only one function;
\item $\beta (z) = 0$ for all $z \in \D$, where 
$$
\beta (z) := \lim_{t \to +\infty}\frac{|\varphi_{0,t}'(z)|}{1 - |\varphi_{0,t}(z)|^{2}};
$$
\item there exists at least one point $z_{0} \in \D$ such that $\beta(z_{0}) = 0$.\\
\end{enumerate}
\item On the other hand, if $\Omega[(f_t)] \neq \C$, then it is written by 
$$
\Omega[(f_t)] = \left\{ w : |w| < \frac1{\beta(0)}\right\},
$$
and for the other normalized Loewner chain $g_{t}$ associated with $(\varphi_{s,t})$, there exists $h \in \mathcal{S}$ such that
\begin{equation*}
\label{nonstandard}
g_{t}(z) = \frac{h(\beta(0)f_{t}(z))}{\beta(0)}.
\end{equation*}
\end{itemize}
\end{thm}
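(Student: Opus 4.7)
The plan is to construct a distinguished normalized $L^d$-Loewner chain (the \emph{standard} one), to classify all other normalized chains associated with $(\varphi_{s,t})$ as its post-compositions with a univalent map fixing $0$ with derivative $1$, and to derive the equivalences from monotonicity of the Schwarz--Pick invariant
\[
A_{s,t}(z) := \frac{(1-|z|^2)|\varphi_{s,t}'(z)|}{1 - |\varphi_{s,t}(z)|^2},
\]
whose decreasing limit in $t$ turns out to be $\beta(z)(1-|z|^2)$.

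First I would establish the classification of normalized chains. If $(f_t), (g_t) \in \LC^d$ are two normalized chains associated with the same $(\varphi_{s,t})$, then the identity $g_t\circ f_t^{-1}\circ f_s = g_t\circ\varphi_{s,t} = g_s$ shows that $g_t\circ f_t^{-1}$ agrees with $g_s\circ f_s^{-1}$ on $f_s(\D)\subset f_t(\D)$, so these maps glue to a univalent holomorphic $H$ on $\Omega[(f_t)]$ with $H(0)=0$, $H'(0)=1$; conversely any such $H$ produces a new normalized chain $H\circ f_t$. To single out the standard $(f_t)$, conjugate $\varphi_{0,t}$ by an automorphism $M_t$ of $\D$ so that $\tilde\varphi_{0,t} := M_t^{-1}\circ\varphi_{0,t}$ fixes $0$ with real positive derivative $A_{0,t}(0)$, then invert and rescale by $A_{0,t}(0)$ to land in $\S$, and extract a locally uniform limit via Koebe distortion (Theorem~\ref{kdistortion}); the semigroup law $\varphi_{0,t}=\varphi_{s,t}\circ\varphi_{0,s}$ propagates the limit to each $f_s$ and forces the chain identity from Theorem~\ref{LEtoEV}.

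For the equivalences, the chain rule from $\varphi_{0,t}=\varphi_{s,t}\circ\varphi_{0,s}$ yields $A_{0,t}(z) = A_{s,t}(\varphi_{0,s}(z)) \cdot A_{0,s}(z)$, with each factor at most $1$ by Schwarz--Pick, so $t\mapsto A_{0,t}(z)$ is decreasing and its limit is $\beta(z)(1-|z|^2)$. For (3)$\Leftrightarrow$(4), a single vanishing point of $\beta$ forces global vanishing via a Vitali/normal-families argument on a holomorphic companion of $\tilde\varphi_{0,t}$. The equivalence (1)$\Leftrightarrow$(3) is read off the standard construction: $\beta(0)\neq 0$ gives a uniform bound on the rescaled inverses that confines $\Omega[(f_t)]$ inside $\{w : |w|<1/\beta(0)\}$ via Koebe's $1/4$-theorem, while $\beta(0)=0$ permits the limits to exhaust $\C$. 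Finally (1)$\Leftrightarrow$(2) follows from the classification: if $\Omega[(f_t)]=\C$ then Liouville-type rigidity for univalent entire maps fixing $0$ with derivative $1$ forces $H=\textup{id}$, giving uniqueness; if instead $\Omega[(f_t)]=\{w:|w|<1/\beta(0)\}$, then $H(w)=h(\beta(0)w)/\beta(0)$ for some $h\in\S$ parametrizes all associated chains, yielding both the displayed formula for $g_t$ and non-uniqueness.

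The principal obstacle I anticipate is proving that the Loewner range of the standard chain is \emph{exactly} $\{w:|w|<1/\beta(0)\}$ (or all of $\C$). One inclusion follows from Koebe's $1/4$-theorem applied to the normalized maps in $\S$; the reverse, that the limits truly exhaust the disk, requires exploiting the sharpness of Schwarz--Pick along the normalizing sequence together with the maximality of the standard chain within the classification. A related subtlety is that $A_{0,t}$ is not itself holomorphic, so the Vitali step for (4)$\Rightarrow$(3) requires passing to a holomorphic object---e.g.\ the derivative of $\tilde\varphi_{0,t}^{-1}$ at the relevant point---on which the identity principle can be invoked.
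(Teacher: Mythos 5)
A preliminary remark: the survey states Theorem \ref{LCunique} as a quoted result from [CDMG10] and supplies no proof of its own, so I can only measure your plan against the source it cites. Your architecture is essentially the right one. The gluing classification of normalized chains by univalent maps $H$ of $\Omega[(f_t)]$ with $H(0)=0$, $H'(0)=1$ is correct, and it immediately delivers (1)$\Leftrightarrow$(2) (Liouville-type rigidity for entire univalent maps in one direction; the family $H(w)=h(\beta(0)w)/\beta(0)$, $h\in\S$, in the other, which is exactly the displayed formula for $g_t$). The identity $A_{0,t}(z)=A_{s,t}(\varphi_{0,s}(z))\,A_{0,s}(z)$ together with Schwarz--Pick does give the monotone limit $(1-|z|^2)\beta(z)$. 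For (3)$\Leftrightarrow$(4) the cleanest mechanism is not Vitali but a Harnack-type inequality for $A_{0,t}$: after moving the base point to $0$ by automorphisms (which leave $A$ invariant), Theorem \ref{kdistortion} applied to $\varphi_{0,t}/\varphi_{0,t}'(0)\in\S$ yields $c(|z|)\,A_{0,t}(0)\le A_{0,t}(z)\le C(|z|)\,A_{0,t}(0)$ with constants depending only on $|z|$, so the limits vanish simultaneously; your ``holomorphic companion'' detour is unnecessary.

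The genuine gap is the one you flag but do not close: showing that the range of the standard chain is \emph{exactly} $\{w:|w|<1/\beta(0)\}$ (equivalently, that $\beta(0)=0$ forces $\Omega[(f_t)]=\C$). Koebe's $1/4$-theorem applied to your rescaled limits only gives $\Omega[(f_t)]\supseteq\{|w|<1/(4\beta(0))\}$, and ``sharpness of Schwarz--Pick plus maximality of the standard chain'' is not yet an argument. The missing idea is a conformal-radius computation. For any normalized chain one has $0=f_t(\varphi_{0,t}(0))$ and $f_t'(\varphi_{0,t}(0))\,\varphi_{0,t}'(0)=f_0'(0)=1$, so the conformal radius of $f_t(\D)$ at the origin equals $(1-|\varphi_{0,t}(0)|^2)\,|f_t'(\varphi_{0,t}(0))|=1/A_{0,t}(0)$, which increases to $1/\beta(0)$. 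Since the domains $f_t(\D)$ increase to $\Omega[(f_t)]$, Carath\'eodory kernel convergence shows the conformal radius of $\Omega[(f_t)]$ at $0$ is exactly $1/\beta(0)$; moreover this quantity is invariant under your classification maps $H$, since $H\circ g$ is again a normalized Riemann map. Post-composing any normalized chain with the normalized Riemann map of its range onto a centered disk (or onto $\C$) then produces the standard chain with range precisely $\{|w|<1/\beta(0)\}$ when $\beta(0)>0$ and $\C$ when $\beta(0)=0$, and uniqueness reduces to Schwarz's lemma applied to $H$ between two centered disks. With this step inserted your outline becomes a complete proof along the lines of Contreras--D\'{\i}az-Madrigal--Gumenyuk.
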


Here we demonstrate how to construct a normalized Loewner chain $(f_{t}) \in \LC$ from a given evolution family $(\varphi_{s,t}) \in \EF$.
Firstly, define $(\psi_{s,t})_{0 \le s \le t < \infty}$ by
$$
\psi_{s,t} := h_{t}^{-1} \circ \varphi_{s,t} \circ h_{s},
$$
where $h_{t}$ is a M\"obius transformation given by
$$
h_{t}(z) := \frac{b(t)z + a(t)}{1 + \closure{a(t)}b(t)z}, \hspace{15pt} a(t) := \varphi_{0,t}(0), \hspace{15pt} b(t) := \frac{\varphi_{0,t}'(0)}{|\varphi_{0,t}'(0)|}.
$$
Then $(\psi_{s,t}) \in \EF$ (\cite[Proposition 2.9]{MR2789373}). Further it is easy to see that $\psi_{s,t}(0)=0$ and $\psi_{s,t}'(0) >0$ for all $0 \le s \le t < \infty$.
By the $(\psi_{s,t})$, define $(g_{s})_{s \ge 0}$ as
\begin{equation}
\label{gt-limit}
g_{s}(z) := \lim_{t \to \infty} \frac{\psi_{s,t}(z)}{\psi_{0,t}'(0)}.
\end{equation}
Remark that the limit in \eqref{gt-limit} is attained locally uniformly on $\D$.
One can show that $(g_{t}) \in \LC$ associated with $(\psi_{s,t}) \in \EF$ and $g_{0} \in \S$ (\cite[Theorem 3.3]{MR2789373}).
Finally, set
$$
f_{t} := g_{t} \circ h_{t}^{-1}. 
$$
We conclude that $(f_{t})_{t \ge 0} \in \LC$ associated with $(\varphi_{s,t}) \in \EF$ and $f_{0} \in \S$.
In the classical radial case, \eqref{gt-limit} corresponds to \eqref{limitation}.

	%
	%
			\subsection{Quasiconformal extensions for Loewner chains of radial type}
	%

In view of Theorem \ref{Beckerthm}, a natural question is proposed that whether the same assumption for $p \in \HF$ that $p \in U(k)$ deduces quasiconformal extensibility of the corresponding $(f_t) \in \LC$ or not.
We give a positive answer to this problem under the special situation that $\tau \in \DW$ is constant.
According to the case that $\tau \in \D$ or $\tau \in \de\D$, the corresponding setting is called the \textbf{radial case} or \textbf{chordal case}.
In the classical Loewner theory, the first is the original case introduced by L\"owner, and the second is investigated firstly by Kufarev and his students \cite{Kufarev:1968}.

We employ the following definition due to \cite{MR2719792}.

\begin{definition}[{\cite[Definition 1.2]{MR2719792}}]
	
			
Let $(\varphi_{s,t}) \in \EF$.
Suppose that all non-identical elements of $(\varphi_{s,t})$ share the same point $\tau_0 \in \overline{\D}$ such that $\varphi_{s,t}(\tau_0) = \tau_0$ and $|\varphi_{s,t}'(\tau_0)| \le 1$ for all $s \ge 0$ and $t \ge s$, where $\varphi_{s,t}(\tau_0)$ and $ \varphi_{s,t}'(\tau_0)$ are to be understood as the corresponding angular limit if $\tau_0 \in \de \D$.
Then $\varphi_{s,t}$ is said to be a \textbf{radial evolution family} if $\tau_0 \in \D$, or a \textbf{chordal evolution family} if $\tau_0 \in \de \D$.
\end{definition}

Then the radial and chordal version of Loewner chains are defined.

\begin{definition}[{\cite[Definition 1.5]{MR2719792}}]
\label{radial-chordal-LC}
	
			
Let $(f_t) \in \LC$.
If $(\varphi_{s,t})_{0 \le s \le t < \infty} := (f_t^{-1} \circ f_s)_{0 \le s \le t < \infty}$ is a radial (or chordal) evolution family, then we call $(f_t)$ a \textbf{Loewner chain of radial} (or \textbf{chordal}) \textbf{type}.
\end{definition}

Now we prove the following quasiconformal extension criterion for a Loewner chain of radial type.

\begin{thm}
\label{qcextension-general}
Let $k \in [0,1)$ be a constant.
Suppose that $(f_t)$ is a Loewner chain of radial type for which $p \in \HF$ associated with $(f_{t})$ by \eqref{generalizeLPDE}, satisfies 
\begin{equation*}
p(z,t) \in U(k)
\end{equation*}
for all $z \in \D$ and almost all $t\ge 0$ and $\tau \in \DW$ is equal to 0. 
Then the following assertions hold;
\begin{enumerate}
\item $f_t$ admits a continuous extension to $\closure{\D}$ for each $t \geq 0$;
\item $F$ defined in \eqref{beckerequation} gives a $k$-quasiconformal extension of $f_0$ to $\C$;
\item $\Omega[(f_t)] = \C$.
\end{enumerate}
\end{thm}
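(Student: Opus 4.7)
My plan is to exploit the hypothesis $\tau\equiv 0$ to collapse the modern $L^d$-apparatus back onto the classical Loewner setting, and then to invoke (a) the non-normalized variant of Becker's theorem (Theorem~\ref{Beckerthm}) developed in \cite{Hotta:2010a}, and (b) Theorem~\ref{LCunique}. Under $\tau\equiv 0$ the PDE \eqref{generalizeLPDE} becomes the classical $\dot f_t(z)=zf_t'(z)p(z,t)$ and the ODE \eqref{evolution} becomes $\dot\varphi_{s,t}(z)=-\varphi_{s,t}(z)p(\varphi_{s,t}(z),t)$, so $\varphi_{s,t}(0)=0$ for all $s\le t$. A brief inspection of the M\"obius description of $U(k)$ shows that it is a Euclidean disk whose leftmost point is $\lambda:=(1-k)/(1+k)>0$, and hence $\Re p(z,t)\ge\lambda$ uniformly. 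Differentiating the ODE at $z=0$ then yields $\varphi_{0,t}'(0)=\exp(-\int_0^t p(0,\xi)\,d\xi)$, so $|\varphi_{0,t}'(0)|\le e^{-\lambda t}\to 0$.

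For assertion (3), this last estimate together with $\varphi_{0,t}(0)=0$ gives $\beta(0)=\lim_{t\to\infty}|\varphi_{0,t}'(0)|=0$, so by Theorem~\ref{LCunique} the standard normalized chain $f_t^{\mathrm{std}}\in\LL[(\varphi_{s,t})]$ has Loewner range $\C$ and is the unique member of $\LL[(\varphi_{s,t})]$. Since the affinely renormalized chain $\tilde f_t:=(f_t-f_0(0))/f_0'(0)$ satisfies $\tilde f_t\circ\varphi_{s,t}=\tilde f_s$ with $\tilde f_0\in\S$, it must coincide with $f_t^{\mathrm{std}}$; hence $\Omega[(f_t)]=f_0(0)+f_0'(0)\,\C=\C$.

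For (1) and (2), I would translate by $-f_0(0)$ so that $f_t(0)=0$ for every $t$ (the shift leaves \eqref{beckerequation} invariant modulo the same constant), and work with the expansion $f_t(z)=a_1(t)z+a_2(t)z^2+\cdots$. Here $a_1(t)=f_0'(0)\exp(\int_0^t p(0,\xi)\,d\xi)$ is complex-valued, locally absolutely continuous, and $|a_1(t)|$ is strictly increasing to $\infty$ thanks to $\Re p(0,\cdot)\ge\lambda$. These are precisely the hypotheses of the non-normalized refinement of Theorem~\ref{Beckerthm} developed in \cite{Hotta:2010a}, which under the classical PDE together with $p(z,t)\in U(k)$ produces the continuous extension of each $f_t$ to $\closure{\D}$ and the $k$-quasiconformality of $F$ in \eqref{beckerequation}; undoing the translation preserves both conclusions. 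The main obstacle I anticipate is not a new analytic estimate but rather the careful bookkeeping needed to confirm that the radial-type, $\tau\equiv 0$ hypothesis genuinely places an $L^d$-Loewner chain inside the (possibly non-normalized) classical framework of \cite{Hotta:2010a} --- specifically, checking that the Taylor coefficients $a_n(t)$ extracted from the $L^d$-regularity condition LC3 are themselves locally absolutely continuous with the required monotonicity, after which no further quasiconformal analysis is required.
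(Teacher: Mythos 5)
Your argument is correct, but it takes a genuinely different route from the paper. The paper proves the theorem directly: it dilates the chain to $f_t^{\rho}(z):=f_t(\rho z)/\rho$ so that everything is defined on $\closure{\D}$, shows that the corresponding map $F_\rho$ built as in \eqref{beckerequation} is a homeomorphism of $\C$ (injectivity on the exterior comes from the equality case of the Schwarz lemma: if $f_{t_1}^{\rho}$ and $f_{t_2}^{\rho}$ collided on the boundary with $t_1\neq t_2$, then $\varphi_{t_1,t_2}$ would be a rotation, forcing $p(\cdot,t)$ onto the imaginary axis and contradicting $p\in U(k)$), computes $\left|\de_{\bar z}F_\rho/\de_z F_\rho\right|\le k$ directly from the PDE, and finally lets $\rho\to 1$ using normality of the family of $k$-quasiconformal maps $(F_\rho)$; assertion (3) then falls out because a quasiconformal map of $\C$ onto its image with these boundary values must be onto $\C$. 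You instead reduce to the non-normalized classical Becker theorem of \cite{Hotta:2010a} combined with Theorem \ref{LCunique}. Your supporting computations are all sound: $U(k)$ is a closed Euclidean disk with leftmost point $(1-k)/(1+k)$, so $\Re p\ge(1-k)/(1+k)>0$; differentiating the ODE at the fixed origin gives $\varphi_{0,t}'(0)=\exp(-\int_0^t p(0,\xi)\,d\xi)$, hence $\beta(0)=0$, hence by Theorem \ref{LCunique} the normalized chain is unique with range $\C$, which yields (3) after undoing the affine renormalization; and $a_1(t)=f_0'(0)\exp(\int_0^t p(0,\xi)\,d\xi)$ is locally absolutely continuous with $|a_1(t)|$ strictly increasing to $\infty$, which are exactly the hypotheses of the generalized chains of \cite{Hotta:2010a}. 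The bookkeeping concern you flag at the end is in fact easily dispatched: since $U(k)$ is a bounded set, $|p(z,t)|\le(1+k)/(1-k)$ everywhere, so the Herglotz data is automatically of order $\infty$ and the chain is locally Lipschitz in $t$ on compact subsets of $\D$; thus the $\tau\equiv 0$ radial hypothesis really does place $(f_t)$ inside the classical non-normalized framework. What the paper's self-contained argument buys is independence from the external reference and an explicit normal-family construction of $F$; what your reduction buys is brevity and a cleaner proof of (3) via the $\beta(0)=0$ criterion rather than via surjectivity of the extension.
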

\begin{proof}
With no loss of generality, we may assume $(f_t) \in \LC$ is normalized, i.e. $f_{0} \in \S$.
Let $\rho \in (c,1)$ with some constant $c \in (0,1)$ and define $f_t^{\rho}(z) := f_t(\rho z)/\rho$. 
Then accordingly $F_{\rho}$ is defined. 
Since $f_{t}^{\rho}(z)$ satisfies $\de_t f_t^{\rho}(z) := z \de_z f_t^{\rho}(z) p(\rho z,t)$, $f_t^{\rho}$ satisfies all the assumptions of our theorem.
Further, $f_t^{\rho}$ is well-defined on $\closure{\D}$ for all $t \ge 0$.

Take two distinct points $z_1, z_2 \in \C$.
If either $z_1$ or $z_2$ is in $\D$, then it is clear that $F_{\rho}(z_1) \neq F_{\rho}(z_2)$.
Suppose $z_1 := r_1 e^{i\theta_1},\,z_2 := r_2 e^{i\theta_2} \in \C\backslash\D$ such that $F_{\rho}(z_1) = F_{\rho}(z_2)$, namely $f_{\log r_1}(\rho e^{i \theta_1}) = f_{\log r_2}(\rho e^{i\theta_2})$.
Denote $t_1 := \log r_1$ and $t_2 := \log r_2$.
Since $f_t^{\rho}(\de \D)$ is a Jordan curve, it follows that $t_1 \neq t_2$.
By the equality condition of the Schwarz lemma we have $\varphi_{t_1, t_2} (z) := f_{t_2}^{-1} \circ f_{t_1} (z) = e^{i\theta}z$ for some $\theta \in \R$.
Hence $p(\D ,t)$ lies on the imaginary axis for all $t \in [t_1, t_2]$ which contradicts our assumption.  
We conclude that $F_{\rho}$ is a homeomorphism on $\C$.

A simple calculation shows that
$$
\left|\frac{\de_{\bar{z}}F_{\rho}(z)}{\de_z F_{\rho}(z)}\right| 
= 
\left|\frac{\de_t f_t^{\rho}(z) - z \de_z f_t^{\rho}(z)}{\de_t f_t^{\rho}(z) + z \de_z f_t^{\rho}(z)}\right|
\leq k
$$
Hence $F_{\rho}$ is $k$-quasiconformal on $\C$.
Since the $k$ does not depend on ${\rho} \in (c,1)$, $(F_{\rho})_{{\rho} \in (c,1)}$ forms a family of $k$-quasiconformal mappings on $\C$ and it is normal.
Therefore the limit $F(z) = \lim_{{\rho} \to 1}F_{\rho}(z)$ exists which gives a $k$-quasiconformal extension of $f_0$.
In particular, $f_t$ is defined on $\de\D$ for all $t \ge 0$.
It also follows from quasiconformality of $F$ that $F(\C) = \Omega[(f_t)] =\C$.
\end{proof}

If $(f_{t})$ is a Loewner chain of radial type and $(p,\tau) \in \BP$ associated with $(f_{t})$ where $p \in U(k)$ and $\tau \in \D\,\backslash\,\{0\}$, $(f_{t})$ satisfies $\dot{f}_{t}(z)=(z-\tau)(1-\bar{\tau}z)f_{t}'(z)p(z,t)$ for all $z\in\D$ and almost all $t \ge 0$.
Let $M$ be a M\"obius transformation defined by
$$
M(z) := \frac{z+\tau}{1+\bar{\tau}z}.
$$
Then $(g_{t})_{t \ge 0} := (f_{t} \circ M)_{t \ge 0}$ is a family of univalent maps satisfying $\dot{g}_{t}(z) = zg_{t}'(z) p(M(z),t)$ for all $z\in\D$ and almost all $t \ge 0$.
By \cite[Theorem 4.1]{MR2789373}, $(g_{t})$ is a Loewner chain whose Berkson-Porta data is $(p,0) \in \BP$.
Applying Theorem \ref{qcextension-general}, $g_{0}$, and hence $f_{0}$, has a $k$-quasiconformal extension to $\C$.

	%
	%
			\subsection{Quasiconformal extensions for Loewner chains of chordal type}
	%

A Loewner chain of chordal type (see Definition \ref{radial-chordal-LC}) with a quasiconformal extension is discussed by Gumenyuk and the author \cite{HottaGumenyuk}.
In the chordal case, $\tau \in \DW$ is a boundary fixed point of $\D$.
By some rotation we may assume that $\tau=1$.

It is sometimes convenient to discuss the chordal case on the not $\D$ but rather the half-plane.
In fact, by means of the conjugation with a Cayley map $K(z)=(1+z)/(1-z)$, everything can be transferred from the unit disk to the right half-plane.
For instance, a family $(\Phi_{s,t})_{0 \le s \le t < \infty}$ of holomorphic self-maps of the right half-plane $\H$ is an \textbf{evolution family} if $(K^{-1} \circ \Phi_{s,t} \circ K)_{0 \le s \le t < \infty}$ is an evolution family on the unit disk $\D$.
Then the generalized chordal Loewner-Kufarev PDE and ODE are written by
\begin{equation}
\label{half-plane}
\dot{\Phi}_{s,t}(\zeta) = p_{\H}(\Phi_{s,t} (\zeta), t) 
\hspace{20pt}
\textup{and}
\hspace{20pt}
\dot{f}_t(\zeta) = -f_t'(\zeta) p_{\H}(\zeta, t)\hspace{15pt}(\zeta \in \H),
\end{equation}
where $p_{\H}(\zeta,t) := 2p(K^{-1}(\zeta), t)$ stands for the right half-plane version of the Herglotz function.
A special case of \eqref{half-plane} that $p_{\H}(\zeta) = 1/(\zeta + i\lambda(t))$ has attracted great attention since the work by Schramm \cite{Schramm:2000} was provided, where $\lambda : [0,\infty) \to \R$ is a measurable function.

The next theorem states the chordal variant of Becker's theorem.

\begin{thm}[\cite{HottaGumenyuk}]
	
			\label{mainthm01}
			
Suppose that a family of holomorphic functions $(f_t)_{t\ge 0}$ on the right half-plane $\H$ is a Loewner chain of chordal type.
If there exists a uniform constant $k \in [0,1)$ such that $p_{\H}$, a Herglotz function associated with $(f_{t})$, satisfies
\begin{equation}
	p_{\H}(\zeta,t) \in U(k)
\end{equation}
for all $\zeta \in \H$ and almost all $t \geq 0$, then 
\begin{enumerate}
\def\labelenumi{(\roman{enumi})}
\item $f_t$ admits a continuous extension to $\H \cup i\R$;
\item  $f_t$ has a $k$-quasiconformal extension to $\C$ for each $t \ge 0$.
In this case the extension $F$ is explicitly given by
$$
F(\zeta) := \left\{
\begin{array}{ll}
f_0(\zeta), & \zeta \in \H,\\
\displaystyle f_{ - \Re \zeta}( i\, \Im \zeta), & \zeta \in \C\backslash\overline{\H};
\end{array}
\right.
$$
\item $\Omega[(f_t)] = \C$.
\end{enumerate}
\end{thm}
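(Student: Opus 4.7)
The strategy is to mimic the proof of Theorem 4.4 (the radial-type case), but adapted to the half-plane geometry, with the imaginary axis playing the role of the unit circle in the disk setting. My first task is to establish the continuous boundary extension (i). Since the boundary $i\R$ is unbounded, I cannot simply dilate by $\rho \in (0,1)$ as in the radial case. Instead I would \emph{translate}: for $\epsilon > 0$, set $f_t^{\epsilon}(\zeta) := f_t(\zeta + \epsilon)$, which is a Loewner chain of chordal type on the shifted half-plane $\{\Re\zeta > -\epsilon\}$ whose new ``boundary'' $\{\Re\zeta = -\epsilon\}$ lies strictly inside $\H$, where $f_t$ is already holomorphic and univalent. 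Each $f_t^{\epsilon}$ satisfies the Loewner--Kufarev PDE with the same Herglotz field evaluated at $\zeta + \epsilon$, and hence the same $U(k)$ constraint. A uniform dilatation bound on the approximating extensions together with normality of families of $k$-quasiconformal mappings allows one to extract a limit as $\epsilon \to 0^+$ providing the continuous extension of $f_t$ to $i\R$.

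Next I would compute the Beltrami coefficient of the ansatz $F$ on the left half-plane $\{\Re\zeta < 0\}$. Writing $\zeta = -t + is$ so that $F(\zeta) = f_t(i s)$, and using $\dot{f}_t = -f_t' p_{\H}$, a direct calculation gives
$$
\partial_{\zeta} F = \tfrac12 f_t'(is)\bigl(1 + p_{\H}(is,t)\bigr), \qquad
\partial_{\bar\zeta} F = \tfrac12 f_t'(is)\bigl(p_{\H}(is,t) - 1\bigr),
$$
so the Beltrami coefficient is
$$
\mu_F(\zeta) = \frac{p_{\H}(is,t) - 1}{p_{\H}(is,t) + 1},
$$
which lies in the closed disk of radius $k$ by the hypothesis $p_{\H}(\,\cdot\,,\,\cdot\,) \in U(k)$. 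Thus, provided $F$ is a homeomorphism, Weyl's lemma together with the measurable Riemann mapping theorem delivers (ii).

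The main obstacle is verifying that $F$ is a homeomorphism of $\C$. Continuity of $F$ on $\H$ and on $\C \setminus \closure{\H}$ is immediate from continuity of $f_0$ and joint continuity of $(t,s) \mapsto f_t(is)$, while continuity across $i\R$ uses (i) combined with the local absolute continuity in $t$ of $f_t(i s) \to f_0(is)$ as $t \to 0^+$. Injectivity within each half-plane follows from univalence of $f_0$ and of each $f_t$; the delicate case is injectivity between different ``time sheets'': a hypothetical collision $f_{t_1}(is_1) = f_{t_2}(is_2)$ with $0 \le t_1 < t_2$ would, via $f_{t_1} = f_{t_2} \circ \Phi_{t_1,t_2}$, force $\Phi_{t_1,t_2}(is_1) = is_2$ on $i\R$. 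But $p_{\H} \in U(k) \subset \{\Re w > 0\}$ forces $\Phi_{s,t}$ to push boundary points strictly into $\H$ (or, in the borderline case, a Schwarz-lemma rigidity argument shows $\Phi_{t_1,t_2}$ would have to be an isometry with $p_{\H}$ purely imaginary on a set of positive measure, contradicting $p_{\H} \in U(k)$).

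Finally, (iii) follows at once: $F$ is a homeomorphism of $\C$ onto $F(\C) = f_0(\closure{\H}) \cup \bigcup_{t > 0} f_t(i\R)$, and the monotone sweep of the boundary curves $f_t(i\R)$ together with the inclusion $f_s(\H) \subset f_t(\H)$ shows this union coincides with the Loewner range $\Omega[(f_t)]$. Hence $\Omega[(f_t)] = F(\C) = \C$. The step I expect to be technically heaviest is the injectivity and boundary matching in the homeomorphism argument: unlike the radial case one cannot confine the argument to a compact disk, so one must additionally control behavior at $\infty$ (using, e.g., the Denjoy--Wolff point at $\infty$ and the chordal structure of the $\Phi_{s,t}$).
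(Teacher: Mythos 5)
The survey states this theorem without proof, citing the preprint \cite{HottaGumenyuk}, so there is no in-paper argument to compare against line by line; the natural benchmark is the paper's proof of the radial analogue in Section 4.4, which is exactly what you adapt, and your adaptation is essentially correct: approximate, show each approximant is a $k$-quasiconformal homeomorphism of $\C$, and pass to the limit by normality. The translation $f_t^{\epsilon}(\zeta):=f_t(\zeta+\epsilon)$ is the right substitute for the dilation $f_t(\rho z)/\rho$, since it preserves the chordal Loewner--Kufarev PDE with Herglotz data $p_{\H}(\cdot+\epsilon,t)\in U(k)$ while moving the problematic boundary into the domain of holomorphy; the Beltrami computation is correct; and your injectivity argument is arguably \emph{cleaner} than the Schwarz-lemma rigidity used in the radial proof, since integrating $\tfrac{d}{dt}\Re\Phi_{s,t}(\zeta)=\Re p_{\H}(\Phi_{s,t}(\zeta),t)\ge\tfrac{1-k}{1+k}$ gives $\Re\Phi_{t_1,t_2}(is_1)\ge\tfrac{1-k}{1+k}(t_2-t_1)>0$, which immediately forbids a collision between distinct time sheets. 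Two points need more care than you give them. First, the logical order: assertion (i) cannot be \emph{used} to glue $F$ along $i\R$, because (i) is itself a product of the limiting procedure; the gluing and the homeomorphism argument must be carried out for each $F_{\epsilon}$ (where they are unproblematic, as $f_t^{\epsilon}$ is holomorphic across $i\R$), and (i) and (ii) then emerge together from the limit. Second, normality of $k$-quasiconformal maps of the non-compact plane requires a normalization to rule out degeneration and to control $\infty$, which you flag but do not close: fix, say, $F_{\epsilon}(1)=f_0(1+\epsilon)$ and $F_{\epsilon}(2)=f_0(2+\epsilon)$, which converge to distinct points, and note that any locally uniform limit restricts to $f_0$ on $\H$ and so is non-constant; the limit is then a $k$-quasiconformal homeomorphism of $\C$ onto $\C$, which also yields (iii). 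These are fixable technicalities rather than conceptual gaps.
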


If $\tau \in \DW$ is a boundary point on $\de\D\,\backslash\,\{1\}$, then composing a proper rotation we obtain the same result as Theorem \ref{mainthm01}.
In fact, by setting $g_t(z) := f_t(\bar{\tau}z)$ we have $g_t(z) = (z-\tau)(1-\bar{\tau}z)g'(z) p(\bar{\tau}z, t)$.
After transferring $g_t$ to the right half-plane, Theorem \ref{mainthm01} with the same $k$ as $f_t$ is applied.

\

	%
	%

\bibliographystyle{amsalpha}
\def\cprime{$'$}
\providecommand{\bysame}{\leavevmode\hbox to3em{\hrulefill}\thinspace}
\providecommand{\MR}{\relax\ifhmode\unskip\space\fi MR }
\providecommand{\MRhref}[2]{%
  \href{http://www.ams.org/mathscinet-getitem?mr=#1}{#2}
}
\providecommand{\href}[2]{#2}

\end{document}